\numberwithin{equation}{section}
\theoremstyle{plain}
\newtheorem{theorem}{Theorem}[section]
\newtheorem{lemma}[theorem]{Lemma}
\newtheorem{corollary}[theorem]{Corollary}
\newtheorem{proposition}[theorem]{Proposition}
\theoremstyle{definition}
\newtheorem{definition}[theorem]{Definition}
\newtheorem{remark}[theorem]{Remark}
\newcommand{\R}{\mathbb{R}}
\newcommand{\N}{\mathbb{N}}
\newcommand{\A}{\mathcal{A}}
\def\XXint#1#2#3{{\setbox0=\hbox{$#1{#2#3}{\int}$ }
\vcenter{\hbox{$#2#3$ }}\kern-.6\wd0}}
\title[Harnack inequality singular-degenerate parabolic equations]{Harnack inequality for singular or degenerate parabolic equations in non-divergence form}
\author[S. Cho]{Sungwon Cho}
\address[S. Cho]{Department of Mathematics Education, Gwangju National University of Education, 55 Pilmundaero Buk-gu,
Gwangju, Republic of Korea (61204)}
\email{scho@gnue.ac.kr}
\author[J. Fang]{Junyuan Fang}
\address[J. Fang]{Department of Mathematics, University of Tennessee, 227 Ayres Hall,
1403 Circle Drive, Knoxville, TN 37996-1320 }
\email{jfang9@vols.utk.edu}
\author[T. Phan]{Tuoc Phan}
\address[T. Phan]{Department of Mathematics, University of Tennessee, 227 Ayres Hall,
1403 Circle Drive, Knoxville, TN 37996-1320}
\email{phan@utk.edu}
\subjclass[2020]{35B05, 35B45, 35B65, 35K65, 35K67, 35K10}
\keywords{Krylov-Safonov Harnack inequality, Equations in non-divergence form, Singular degenerate coefficients, Weighted H\"{o}lder estimates, Liouville theorem.}
\thanks{The research of T. Phan was partially supported by Simons Foundation, grant \# 769369. Most part of this work was done when S. Cho visited the Department of Mathematics at the University of Tennessee - Knoxville. S. Cho would like to express his sincere gratitude for the warm hospitality and invaluable support during the visit.}
\begin{document}
\begin{abstract} This paper studies a class of linear parabolic equations in non-divergence form in which the leading coefficients are measurable and they can be singular or degenerate as a weight belonging to the $A_{1+\frac{1}{n}}$ class of Muckenhoupt weights. Krylov-Safonov Harnack inequality for solutions is proved under some smallness assumption on a weighted mean oscillation of the weight. To prove the result, we introduce a class of generic weighted parabolic cylinders and the smallness condition on the weighted mean oscillation of the weight through which several growth lemmas are established. Additionally, a perturbation method is used and the parabolic Aleksandrov-Bakelman-Pucci type maximum principle is crucially applied to suitable barrier functions to control the solutions. As corollaries, H\"{o}lder regularity estimates of solutions with respect to a quasi-distance, and a Liouville type theorem are obtained in the paper.
\end{abstract}
\maketitle
\section{Introduction} 
\subsection{Settings} This paper proves Krylov-Safonov Harnack inequality for solutions to a class of second order linear parabolic equations in non-divergence form in which the leading coefficients are measurable, but they do not satisfy the uniformly elliptic nor boundedness conditions. The obtained result therefore naturally extends the well-known and classical one due to N. V. Krylov and M. V. Safonov \cite{KrySa-1, KrySa-2}, and it is also the first available result in the research line.  

\smallskip
To set up, let us denote the parabolic operator
\begin{equation} \label{L-def} 
\mathcal{L} u = u_t - \omega(x) a_{ij}(x,t) D_{ij} u,
\end{equation} 
where $a_{ij} = a_{ij}(x,t)$ is assumed to be measurable in $(x,t)$ in its considered domain in $(n+1)$-dimensional Euclidean space,  
namely, 
$\mathbb{R}^{n+1} = \{X=(x,t) :  x=(x_1, x_2, \ldots x_n) \in \mathbb{R}^n, t \in (-\infty, \infty) \} $ for $n \ge 1$. Moreover, $\omega$ is a non-negative measurable function, $D_{ij}$ denotes the second order partial derivatives with respect to $x_i$ and $x_j$ variables for $i, j = 1, 2,\ldots, n$, and  $u_t$ denotes the partial derivative of $u$ in $t$-variable. Also, throughout the paper,  the Einstein summation convention is used.

\smallskip
We assume that the matrix $(a_{ij})$ is symmetric and it satisfies the uniformly elliptic and boundedness conditions: there is $\nu \in (0,1)$ such that
\begin{equation} \label{elliptic}
\nu |\xi|^2 \leq a_{ij}(x,t) \xi_i \xi_j, \quad |a_{ij}(x,t)| \leq \nu^{-1}, \quad \forall \ \xi = (\xi_1, \xi_2,\ldots, \xi_n) \in \mathbb{R}^n
\end{equation}
for every $(x,t)$ in the considered domains. Moreover, we assume that $\omega = \omega(x)$ is in the $A_{1+\frac{1}{n}}$ class of Muckenhoupt weights. As such, the function $\omega$ can vanish and be singular at some points $x$. Consequently,  the leading coefficients $\omega(x) a_{ij}(x,t)$ in the parabolic operator $\mathcal{L}$ can be degenerate,  singular, or both degenerate and singular. When $\omega \equiv 1$,  $\mathcal{L}$ is reduced to a parabolic operator with uniformly elliptic and bounded coefficients.

\smallskip
To briefly describe our results, let us introduce a class of weighted parabolic cylinders. Throughout the paper, for every $\rho>0$ and $x_0\in \mathbb{R}^n$, $B_\rho(x_0)$ denotes the open ball in $\mathbb{R}^n$ of radius $\rho$ centered at $x_0$.  For the given weight $\omega$ in \eqref{L-def}, and for the spatial variable $x$ in the considered domain, let us denote
\begin{equation} \label{mu-def}
\mu(x) = \omega(x)^{-n},
\end{equation}
and
\[
(\mu)_{B_\rho(x_0)} = \frac{1}{|B_\rho(x_0)|} \int_{B_\rho(x_0)} \mu(x) dx,  \quad \text{where} \quad 
 |B_\rho(x_0)| = \int_{B_\rho (x_0)} dx.
\]
Now, for $Y = (y,s) \in \mathbb{R}^{n} \times \mathbb{R}$ and $r>0$, let $C_{r, \mu}(Y)$ be the weighted parabolic cylinder of radius $r$ centered at $Y$, defined by
\begin{equation} \label{intro-cylinder}
C_{r, \mu}(Y) = B_r(y) \times \big (s-r^2(\mu)_{B_r(y)}^{1/n}, s\big).
\end{equation}
\noindent
Here is an informal version of our main result on the Harnack inequality, which gives an answer to the problem posed in \cite{DPT-1} and also in \cite[p. 168]{G-Dong}. See Theorem \ref{Harnack inequality} below for its formal statement.

\begin{theorem}[Harnack inequality] \label{show-off-theorem} Assume that \eqref{elliptic} holds in $C_{2r, \mu}(Y)$ with $r>0$ and $Y=(y, s) \in \mathbb{R}^n \times \mathbb{R}$. Assume also $\omega \in A_{1+\frac{1}{n}}$ and that it satisfies some smallness weighted mean oscillation condition. Then, the following Krylov-Safonov Harnack inequality
\begin{equation} \label{Harnack-show}
\sup_{U_1}u\leq N \inf_{U_2}u
\end{equation}
holds for every non-negative solution $u$ solving the equation 
\begin{equation} \label{show-off-eqn}
\mathcal{L} u =0 \quad \text{in} \quad C_{2r,\mu}(Y).
\end{equation}
Here in \eqref{Harnack-show}, $N$ is a positive generic constant independent of $u$, $r$ and $Y$. Moreover,
\begin{align*}
U_1& =B_{r}(y) \times\big(s-3r^2(\mu)_{B_{2r}(y)}^{1/n}, s-2r^2(\mu)_{B_{2r}(y)}^{1/n}\big), \\
U_2 &=B_{r}(y) \times\big(s-r^2(\mu)_{B_{2r}(y)}^{1/n}, s \big).
 \end{align*}
\end{theorem}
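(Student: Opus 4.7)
The plan is to adapt the classical Krylov-Safonov argument to the weighted parabolic cylinders $C_{r,\mu}(Y)$. The target \eqref{Harnack-show} is split into two one-sided estimates: a \emph{weak Harnack inequality}, bounding $\inf_{U_2}u$ from below by a weighted $L^{p}$-average of $u$ on an intermediate cylinder, and a \emph{local maximum principle}, bounding $\sup_{U_1}u$ from above by the same average, for a sufficiently small exponent $p>0$. Composing the two estimates removes the intermediate norm and produces \eqref{Harnack-show}. After a spatial translation moving $y$ to the origin, a parabolic rescaling of the form $(x,t)\mapsto \bigl(rx,\,r^{2}(\mu)_{B_{2r}(y)}^{1/n}t\bigr)$ reduces the argument to a normalized unit-scale cylinder, preserving the structural assumptions on $a_{ij}$ and $\omega$ up to a controlled change of the $A_{1+1/n}$ constant.

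For the weak Harnack inequality, I would iterate the growth lemmas announced in the abstract. In their expected form, each such lemma asserts that if $u\ge 0$ solves $\mathcal{L}u=0$ in $C_{r,\mu}(Y)$ and the $\mu$-weighted measure of $\{u\ge 1\}$ inside a fixed sub-cylinder is at least $\varepsilon$, then $u\ge \delta$ on a smaller concentric sub-cylinder, with $\varepsilon,\delta>0$ depending only on $n$, $\nu$, and the $A_{1+1/n}$ constant of $\omega$. To iterate them, I would perform a Calder\'on-Zygmund-type stopping-time decomposition of an intermediate cylinder $\widetilde U$ into a disjoint family of weighted sub-cylinders of the same type $C_{r,\mu}$; the doubling property of $\mu$ (which follows from $\omega \in A_{1+1/n}$) is what makes this decomposition go through. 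The chaining of the growth lemmas along the decomposition yields a distributional inequality of the form
\[
\mu\bigl(\{u\le \lambda\}\cap \widetilde U\bigr)\ \le\ N\,\lambda^{p}\,\mu(\widetilde U),\qquad 0<\lambda\le \lambda_0,
\]
from which the weak-type bound $\inf_{U_2}u\ \ge\ c\,\|u\|_{L^{p}(\widetilde U;\,d\mu\,dt)}$ follows by a standard truncation argument.

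For the local maximum principle, the plan is to invoke the weighted parabolic Aleksandrov-Bakelman-Pucci maximum principle against a family of carefully designed barriers. Setting $M:=\sup_{U_1}u$, one constructs on a slightly enlarged cylinder a barrier $\Phi$ whose Hessian quantity $\omega a_{ij}D_{ij}\Phi$ is balanced against the time derivative under the time scale $r^{2}(\mu)_{B_{2r}(y)}^{1/n}$, and compares $u$ with $M\Phi$. The key computation is that $\mathcal{L}\Phi$ is controlled by a pointwise term whose $L^{n}$-norm with respect to $\mu\,dx\,dt$ is small enough to be absorbed into the ABP inequality applied to $(u-M\Phi)_{+}$. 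A dyadic iteration of the resulting bound along a chain of weighted cylinders produces
\[
\sup_{U_1}u\ \le\ N\,\|u\|_{L^{p}(\widetilde U;\,d\mu\,dt)},
\]
with the same exponent $p>0$ as in the weak Harnack estimate.

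The main obstacle, I expect, is the center- and radius-dependence of the time length $r^{2}(\mu)_{B_{r}(y)}^{1/n}$. In the classical unweighted case, parabolic cylinders scale self-similarly, so dyadic subdivision preserves the aspect ratio exactly and the Calder\'on-Zygmund covering is almost automatic. Here, the ratio of the time length to the square of the spatial radius depends on the base point and the scale through the nonlocal quantity $(\mu)_{B_{r}(y)}$, so both the covering and the chaining of the growth lemmas must be executed with a quantitative control on how $(\mu)_{B_{r}(y)}$ varies across nearby scales. This is precisely where the smallness assumption on the weighted mean oscillation of $\omega$ is decisive: it ensures that neighbouring weighted cylinders at comparable scales are themselves comparable, so that the iteration arguments can be closed and the barrier construction made uniform.
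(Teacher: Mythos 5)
Your plan is sound, but it organizes the argument differently from the paper. You factor the Harnack inequality as
\[
\sup_{U_1}u\ \le\ N\,\|u\|_{L^{p}(\widetilde U;\,\mu\,dx\,dt)}\ \le\ N\,\inf_{U_2}u,
\]
i.e.\ a local maximum principle composed with a weak Harnack inequality, with the weak Harnack obtained by iterating a growth lemma through a Calder\'on--Zygmund stopping-time decomposition to reach a power-type distributional inequality. The paper never passes through an intermediate $L^{p}$-norm in the Harnack proof. Instead, following Ferretti--Safonov, it introduces the distance-weighted quantity $d(X)^{\gamma}u(X)$ on the lower half cylinder, maximizes it at a point $X_{0}$, and shows directly that $\sup_{U_1}u\le (2d(X_0)/r)^{\gamma}u(X_0)$. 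It then runs the contrapositive of the first growth lemma (Proposition~\ref{growth-1}) to produce a positive $\mu$-density of $\{u>u(X_0)/2\}$ near $X_0$, applies the third growth lemma (via Corollary~\ref{Growth corollary}) to get a pointwise lower bound $u\ge\beta u(X_0)$ on a small cylinder above $X_0$, and finally uses the prop-up lemma (Corollary~\ref{prop-up-lemma}) to push that lower bound forward in time to all of $U_2$. So the quantity that links $\sup_{U_1}u$ and $\inf_{U_2}u$ is the single value $u(X_0)$, not an $L^{p}$-average; the Krylov--Safonov covering lemma (Lemma~\ref{covering-1}) appears, but only inside the proof of the third growth lemma, not as the driver of a full distributional estimate as you propose. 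The paper does prove a local maximum principle of the $L^{q}$-average type (Corollary~\ref{cor-1}), but it is used for the H\"older estimate (Corollary~\ref{Holder regularity}), not for the Harnack inequality. Your route is the Caffarelli--Cabr\'e-style decomposition and should close given the same growth lemmas, the doubling of $\mu$, and the smallness of $[[\omega]]_{\textup{BMO}}$; what you lose relative to the paper's argument is economy, since proving the full weak Harnack inequality with an $L^{p}$-norm requires the covering/crawling-ink-spots iteration at every dyadic scale, whereas the paper invokes the covering lemma once and then substitutes the $d^{\gamma}u$ maximization plus the prop-up lemma to avoid the $L^{p}$ detour. You do correctly identify the central technical obstacle — the center- and scale-dependence of the time length $r^{2}(\mu)_{B_r(y)}^{1/n}$ — and the role of the mean-oscillation smallness in controlling it.
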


\smallskip
We would like to point out that the class of weighted cylinders $C_{r,\mu}(Y)$ defined in \eqref{intro-cylinder} can be derived from a quasi-distance function $\rho_{\omega}$, which we define in \eqref{quasi-metric-intro} below.  Hence, in some sense, there is a geometric structure hidden in the PDE of $\mathcal{L}  u =0$ that endows the underlying space $\mathbb{R}^{n+1}$ with the inhomogeneous metric $\rho_\omega$. In Corollary \ref{Holder regularity}, weighted H\"{o}lder regularity estimates with respect to $\rho_\omega$ for solutions solving $\mathcal{L} u =0$ are obtained. In addition, a Liouville type theorem for bounded solutions to $\mathcal{L}u =0$ in $\mathbb{R}^n \times (-\infty, 0)$ is proved in Corollary \ref{Liouville theorem} as a consequence of our Harnack estimates.

\smallskip
We also note that in the series of papers \cite{Chia-1, Chia-2, Chia-3}, a class of parabolic equations with singular degenerate coefficients similar to our case but in divergence form was studied. Under some certain conditions, Moser Harnack inequalities were proved in \cite{Chia-1, Chia-3}. Significantly, it was also shown in \cite{Chia-2} that Moser Harnack inequalities are false in certain cases. See also \cite{Bella, Tru-1} for the results in the elliptic case in which Moser Harnack inequalities were obtained. The novelty in Theorem \ref{show-off-theorem} is the introductions of the class of weighted cylinders $C_{r, \mu}(Y)$, and of the smallness condition on a weighted mean oscillation of the weight $\omega$, through which important growth lemmas as in \cite{FeSa} are established. Our results on Harnack inequalities and H\"{o}lder estimates provide the non-divergence counterpart of the classical ones for equations in divergence form  obtained in \cite{Fabes-1, Fabes, Tru-1, Chia-1, Chia-2, Chia-3}. 

\smallskip
As an example, we consider $\omega(x) = |x|^\beta$ for $x \in B_{2r}$, where $B_{\rho} = B_\rho(0)$ for $\rho>0$. Note that $\omega \in A_{1+\frac{1}{n}}$ when $\beta \in (-n, 1)$, and $(\mu)_{B_{2r}}^{1/n} \sim (2r)^{-\beta}$. Then, the equation \eqref{show-off-eqn} with $Y=(0,0)$ is roughly reduced to
\[
u_t - |x|^{\beta} a_{ij}(x,t) D_{ij}u =0 \quad \text{in} \quad B_{2r} \times \big(-(2r)^{2 - \beta}, 0\big).
\] 
Then \eqref{Harnack-show} holds when $|\beta|$ is sufficiently small with
\begin{align*}
U_1& =B_{r} \times\Big(-\frac{3}{4}(2r)^{2-\beta}, -\frac{1}{2} (2r)^{2-\beta}\Big)\quad \text{and}\quad U_2=B_{r} \times\Big(-\frac{1}{4}(2r)^{2-\beta}, 0 \Big).
 \end{align*}
 See the discussion right after the statement of Theorem \ref{Harnack inequality} for more details. Note that in this setting and with the simplest case that $(a_{ij})$ is the identity matrix, our results (Harnack inequalities, H\"{o}der estimates, Liouville theorem) are new. 

\smallskip
We would like to note that it is possible to follow the known techniques such as those in \cite{G-Dong, G-Chen, Krylov-book, Liber} to extend the main results (Theorem \ref{Harnack inequality}, Corollary \ref{Holder regularity}, and Corollary \ref{Liouville theorem}) to a larger class of equations with lower order terms and non-homogeneous right hand side. Namely, our results can be extended to the class parabolic equations
\begin{equation*}
u_t - \omega(x) a_{ij}(x,t) D_{ij} u + b_i(x,t) D_i u + c(x,t) u =f  
\end{equation*}
with the same assumptions on $\omega$ and $(a_{ij})$, and with suitable conditions on $b_i$, $c$, and $f$. However, to avoid unnecessary technical issues and for simplicity in presentation, in this paper, we only study the class of equations $\mathcal{L} u=0$ with $\mathcal{L}$ defined in \eqref{L-def}. 
\subsection{Relevant Literature} The Harnack inequality  was first formulated and proved by A. Harnack  for harmonic functions when $n=2$, see \cite[paragraph 19, p. 62]{Harnack}. It was then not obvious until 1950s that the analog of the Harnack inequality for non-negative caloric functions, i.e., non-negative solutions $u$ to the heat equation
\[ u_t - \Delta u =0.\]
This parabolic Harnack estimate was first established by J. Hadamard in \cite{Hadamard} and independently by B. Pini in \cite{Pini}.

\smallskip
It is J. Moser who first proved Harnack inequalities for non-negative weak solutions 
to second order linear equations in divergence form with uniformly elliptic and bounded measurable coefficients. See 
 \cite{Moser-1} for elliptic case and \cite{Moser-2}  for parabolic case in which H\"{o}lder regularity estimates of solutions were also proved. Note also that results on H\"{o}lder continuity of solutions without using Harnack inequalities were independently obtained  in the celebrated papers \cite{DeGiorgi, Nash}. Nowadays, the fundamental papers \cite{DeGiorgi, Moser-1, Moser-2, Nash}  all together are known as the De Giorgi - Nash - Moser theory. It is important to note that Moser Harnack inequalities were also proved for a larger class of quasi-linear equations in divergence form, see \cite{Serrin, Trudinger-0, Trudinger}. For further results and developments, see the papers \cite{Ivannov, Serrin-1, Trudinger-2, Mingione, Kassmann} and the books \cite{DiBenedetto, DGV, Gilbarg-Trudinger, Liber}. 

\smallskip
Though, the De Giorgi - Nash - Moser method works well for a large class of equations, it does not apply to equations in non-divergence form with uniformly elliptic and bounded measurable coefficients. The breakthrough was due to N. V. Krylov and M. V. Safonov who first proved Harnack inequalities for this class of equations in the well-known papers \cite{KrySa-1, KrySa-2}. From these celebrated papers, the Krylov-Safonov Harnack inequalities and the method to prove them play a very crucial role in the development of the regularity theory for linear, nonlinear, and fully nonlinear second order elliptic and parabolic equations satisfying the uniformly elliptic and bounded conditions, see the books \cite{CC, Krylov-book, Gilbarg-Trudinger, G-Dong, Liber} for more results and discussion.

\smallskip
Equations with singular or degenerate coefficients appear naturally from both pure and applied mathematics, see \cite{Caffarelli, DPS, DPT-1, EM, JX, Le, Lin-p, Pop-2} and the references therein. Literature on Harnack inequalities and H\"{o}lder estimates for such  equations have been extensively developed in the last few decades.  Below, let us give some references with closely related results. H\"{o}lder regularity estimates for solutions to elliptic equations in divergence form with singular and degenerate coefficients, which are in the $A_2$-Muckenhoupt class, were proved in the classical papers \cite{Fabes-1, Fabes}.  Under some integrability conditions on the smallest and largest eigenvalue functions of the leading coefficient matrix, Moser Harnack inequalities were proved in \cite{Tru-1}. See \cite{Bella} for the recent resolution of the problem on the optimality of the integrability conditions in \cite{Tru-1}. Parabolic equations in divergence form with singular degenerate coefficients were also studied in \cite{Chia-1, Chia-2, Chia-3}, and Moser Harnack inequalities were particularly proved to be false in certain cases. Various similar results were also obtained for degenerate quasi-linear equations in divergence form. For example, see \cite{GW} and the references therein for results and discussion. On the other hand, many other results on Krylov-Safonov Harnack inequalities, H\"{o}lder estimates, and Schauder estimates were also obtained for linear, non-linear equations with some structural singular-degenerate coefficients that are different from our settings. For examples, see \cite{JX,  Le, Lin-p, Pop-2} for results on H\"{o}lder and Schauder estimates, and \cite{Caffarelli, G-Chen, NLe, Maldonado, Krylov-drift, Mooney, Silvestre, Savin} for results on Harnack inequalities and others.

\smallskip
The objective of this work is twofold. On one hand, it provides the Krylov-Safonov Harnack inequality analog  of the Moser one developed in \cite{Bella, Chia-1, Chia-2, Chia-3,  Fabes,Tru-1} for equations in divergence form.  On the other hand, the type of singular-degenerate coefficients as in $\mathcal{L}$ appears in various problems such as geometric analysis, fluid mechanics, and mathematical finance. For examples, see \cite{CMP, DPS, DPT-1, DPT-2, DP, EM, JX} and the references therein. Results on well-posedness and regularity estimates of solutions in Sobolev spaces for such class of equations have been developed recently in \cite{CMP, DPS, DPT-1, DPT-2, DP}. This paper provides a new development in the research program. Applications of our results in the study of nonlinear and fully nonlinear equations will be addressed in our forthcoming papers.
\subsection{Main ideas and approaches}  To prove the Harnack inequality \eqref{Harnack-show}, we follow and adjust the approach introduced in \cite{FeSa}, which is originated and developed from \cite{KrySa-1, KrySa-2, E.M.Landis}. In the approach, it is essential to formulate and prove suitable growth lemmas (see Theorem 3.3, Theorem 4.2, and Theorem 5.3 in \cite{FeSa}). In the setting when coefficients are uniformly elliptic and bounded, the growth lemmas can be viewed as the direct consequences of the heat diffusion phenomena. However, this is not obvious when the coefficients are degenerate, singular, or degenerate and singular as the diffusion can be disrupted and the heat can be disconnected or the heat spreads so quickly in space along time. Our main idea is to apply the perturbation method by freezing the weight $\omega$. The class of weighted cylinders and the weighted mean oscillation of the weight $\omega$ which are generic in preserving the scaling properties are introduced. Through those, we use suitable barrier functions and the Krylov-Tso version of the classical Aleksandrov-Bakelman-Pucci estimates to control the solutions. Important growth lemmas (Proposition \ref{growth-1}, \ref{second-growth-lemma}, and \ref{third-growth}) are formulated and proved. A weighted version of the Krylov-Safonov covering lemma (Lemma \ref{covering-1}) required in the proof is also established.
\smallskip
\subsection{Organization of the paper} The rest of the paper is organized as follows. In the next section, Section \ref{main-result-state}, the main results of the paper with precise definitions and conditions are stated. In Section \ref{Prelim-section}, we recall and prove several preliminary analysis results needed for the paper. In particular, the definitions and important properties of the $A_p$  Muckenhoupt class of weights are recalled.  Properties of the weighted parabolic cylinders are proved. In this section, we also recall several functional spaces and the Krylov-Tso version of the classical Aleksandrov-Bakelman-Pucci theorem.  Moreover, the weighted version of the Krylov-Safonov covering lemma is also formulated. Section \ref{growth-section} is devoted to formulate and prove the growth lemmas that are needed to prove our Harnack inequalities. In particular, three important growth lemmas (Proposition \ref{growth-1}, \ref{second-growth-lemma}, and \ref{third-growth}) and their corollaries are proved. Then, in Section \ref{proof-section}, we provide the proofs of our main results: Harnack inequalities (Theorem \ref{Harnack inequality}), H\"{o}lder continuity (Corollary \ref{Holder regularity}), and Liouville theorem (Corollary \ref{Liouville theorem}). The paper concludes with Appendix \ref{Appendix-A} which provides the proof of the weighted covering lemma (Lemma \ref{covering-1}).
\section{Statements of main results} \label{main-result-state}
Let us begin by introducing some notation and definitions. For a given non-negative locally integrable function $\sigma: \mathbb{R}^n \rightarrow [0,\infty)$, and for each measurable set $E \subset \mathbb{R}^n$,  we denote $\sigma(E)$ the measure of $E$ with respect to the measure $\sigma(x) dx$, i.e.,
\[
\sigma(E) =  \int_{E} \sigma(x) dx .
\]
Also, for a given integrable function $f$ on $E$, we denote the mean of $f$ on $E$ with respect to the Lebesgue measure $dx$ by
\begin{equation*}  
 (f)_{E} = \fint_{E} f(x) dx = \frac{1}{|E|}\int_{E} f(x)  dx \quad \text{with} \quad |E| = \int_{E} dx.
\end{equation*}
Throughout the paper, for any $r>0$ and $x \in \mathbb{R}^n$, we denote $B_r(x)$ the open ball in $\mathbb{R}^n$ of radius $r$ and centered at $x$.

\smallskip
Next, let us refer to the definition of the $A_p$ class of Muckenhoupt weights in Definition \ref{A-p-def}. We now recall the definition of bounded mean oscillation with respect to a weight $\omega$ introduced in \cite{Muckenhoupt-Wheeden}.
\begin{definition}  \label{def-mean-os}Let $\omega \in A_{1+\frac{1}{n}}$ and $f$  be locally integrable in a non-empty open set $\Omega\subset \R^n$. For $r>0$ and $y \in \Omega$ such that $B_r(y) \subset \Omega$, let us denote the mean oscillation of $f$ on $B_r(y)$ with respect to the weight $\omega$ by
\begin{equation}\label{WBMO def}
[[f]]_{B_r(y), \omega} = \Big( \frac{1}{\omega(B_r(y))} \int_{B_r(y)} \big|f(x) -(f)_{B_r(y)} \big|^{n+1} \omega(x)^{-n} dx \Big)^{\frac{1}{n+1}} .
\end{equation}
We say $f$ is of \textit{weighted bounded mean oscillation with respect to the weight $\omega$ on $\Omega$} if
\begin{equation}\label{def WBMO}
[[f]]_{\text{BMO}(\Omega, \omega)}=\sup_{B_r(y) \subset \Omega}[[f]]_{B_r(y), \omega}  < \infty.
\end{equation}
\end{definition}
\noindent 
\begin{remark} \label{remark-BMO} 
For $\omega \in A_{1+\frac{1}{n}}$, 
it follows from \cite[Theorem 4]{Muckenhoupt-Wheeden} that $[[f]]_{\text{BMO}(\Omega, \omega)}$ is comparable to
\[
 \sup_{B_r(y) \subset \Omega} \Big(\frac{1}{\omega(B_r(y))} \int_{B_r(y)} \big|f(x) -(f)_{B_r(y)} \big|^q \omega(x)^{1-q}  dx \Big)^{\frac{1}{q}} 
\]
for every $q \in [1, n+1]$, and the latter could be more convenient in some manipulation when $q=1$. However, we use \eqref{WBMO def} as it appears naturally in our paper.
\end{remark}
Recall the definition of cylinders $C_{r, \mu}(Y)$ in \eqref{intro-cylinder}. We note that as $\omega \in A_{1+\frac{1}{n}}$, it follows that $\mu$ is locally integrable. Therefore, $(\mu)_{B_r(y)}$ is well-defined. Our main result of the paper is the following Krylov-Safonov Harnack inequality.
\begin{theorem}[Harnack inequality]\label{Harnack inequality} For every $K_0\in[1,\infty)$ and $\nu \in (0,1)$, there exist a sufficiently small  number $\epsilon=\epsilon(n,\nu,K_0) \in (0,1)$, and  a number $N = N(n, \nu, K_0)~>~0$ such that the following assertion holds. 
Suppose that  \eqref{elliptic} holds in $C_{2r, \mu}(Y)$ for some $r>0$, $Y = (y,s) \in \mathbb{R}^{n+1}$, and for $\mu$ defined in \eqref{mu-def}. Also, suppose that $\omega\in A_{1+\frac{1}{n}}$ satisfies
\begin{equation} \label{wei-osc-cond}
[\omega]_{A_{1+\frac{1}{n}}}\leq K_0 \quad \text{and}  \quad 
[[\omega]]_{\textup{BMO}(B_{4r}(y), \omega)} \leq \epsilon.
\end{equation} 
Then, for every non-negative $u \in C^{2,1}(C_{2r,\mu}(Y)) \cap C(\overline{{C}_{2r,\mu}(Y)})$ solving 
\begin{equation} \label{PDE-main-theorem}
\mathcal{L}u=0 \quad \text{in} \quad C_{2r, \mu}(Y),
\end{equation}
it holds that
\begin{equation}\label{Harnack}
\sup_{U_1}u\leq N\inf_{U_2}u,
\end{equation}
where
\begin{align*}
U_1& =B_{r}(y)\times\big(s-3r^2(\mu)_{B_{2r}(y)}^{1/n},s-2r^2(\mu)_{B_{2r}(y)}^{1/n}\big)\quad \text{and}\\
 U_2&=B_{r}(y)\times\big(s-r^2(\mu)_{B_{2r}(y)}^{1/n},s \big).
 \end{align*}
\end{theorem}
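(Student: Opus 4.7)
The plan is to follow the Krylov-Safonov-Fabes-Safonov scheme, adapted to the anisotropic weighted geometry defined by the cylinders $C_{r,\mu}(Y)$ in \eqref{intro-cylinder}. Because $\mathcal{L}$ is translation invariant in $t$ and essentially in $x$ (with the coefficients translated along), I would first take $Y=(0,0)$. Next, since the $A_{1+\frac{1}{n}}$ class and the weighted BMO seminorm \eqref{WBMO def} are both invariant under the dilation $x \mapsto rx$, I would rescale spatially by $r$ and temporally by $r^2(\mu)_{B_r(0)}^{1/n}$, which normalizes to the case $r$ of order one while preserving \eqref{wei-osc-cond} with the same constants $K_0$ and $\epsilon$. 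After this reduction, it suffices to prove \eqref{Harnack} on a unit-scale weighted cylinder.

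The engine of the proof is a pair of growth lemmas in the spirit of \cite{FeSa, KrySa-1, KrySa-2, E.M.Landis}, established in the weighted setting. The first is an \emph{expansion-of-positivity} (measure-to-pointwise) lemma: if $u \geq 1$ on a subset of $C_{r,\mu}(Y)$ whose $\mu$-measure is a definite fraction of $\mu(C_{r,\mu}(Y))$, then $u \geq c$ on a forward-shifted weighted cylinder of comparable size. The second is an \emph{upper growth} lemma controlling the distribution $\mu(\{u > M\} \cap C_{r,\mu}(Y))$ via the Krylov-Tso parabolic ABP estimate applied to the contact set of the super-level sets of $u$. To prove each, I would freeze the weight at $\omega_0 := (\omega)_{B_{4r}(y)}$ and rewrite $\mathcal{L}u = 0$ as
\begin{equation*}
u_t - \omega_0\, a_{ij}(x,t)\, D_{ij} u \;=\; \bigl(\omega(x) - \omega_0\bigr)\, a_{ij}(x,t)\, D_{ij} u.
\end{equation*}
The frozen operator on the left becomes a standard uniformly parabolic operator after the change of variable $\tau = \omega_0 t$, under which $C_{r,\mu}(Y)$ maps to a standard parabolic cylinder (since $(\mu)_{B_r(y)}^{1/n} \approx \omega_0^{-1}$ by the $A_{1+\frac{1}{n}}$ property); hence the classical barriers and comparison principle are available, and I would build explicit radial barriers to produce the desired comparisons. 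The right-hand side error is then absorbed through the Krylov-Tso ABP estimate: its $L^{n+1}$ norm with weight $\omega^{-n}=\mu$ is controlled by $[[\omega]]_{\textup{BMO}(B_{4r}(y),\omega)} \leq \epsilon$ (which is precisely why \eqref{WBMO def} is tailored with exponent $n+1$ and weight $\mu$), and choosing $\epsilon$ sufficiently small closes the estimate.

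With the two growth lemmas available, the proof finishes by the standard Krylov-Safonov iteration adapted to the weighted cylinders: applying the upper growth lemma repeatedly through the weighted Calder\'on-Zygmund-type decomposition (Lemma \ref{covering-1}) yields a decay of the form $\mu(\{u > M^k\} \cap Q) \leq (1-\eta)^k \mu(Q)$ for a universal $\eta = \eta(n,\nu,K_0) \in (0,1)$, and hence a weak-$L^{\varepsilon}$ bound for $u$ in $U_2$ relative to its value at any fixed reference point. This weak estimate, combined with a chained application of the expansion-of-positivity lemma running backward in time across a bounded number of overlapping weighted cylinders linking $U_2$ to $U_1$, produces $\sup_{U_1} u \leq N \inf_{U_2} u$ with $N = N(n,\nu,K_0)$.

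The main obstacle I anticipate is the non-additivity of the temporal length of $C_{r,\mu}(Y)$ under spatial translation: the factor $(\mu)_{B_r(y)}^{1/n}$ depends on the center $y$, so the subcylinders produced by the covering lemma generally have different time-lengths that must be controlled two-sidedly by a single reference scale. Closing this loop requires using the $A_{1+\frac{1}{n}}$ condition together with the BMO smallness on $\omega$ to bound ratios of local averages of $\mu$ across nearby balls, and verifying that the ABP error absorption remains uniform throughout the iteration. Once this is managed, the growth lemmas are robust, and the Krylov-Safonov-Fabes-Safonov machinery transfers faithfully to the weighted setting, delivering the asserted constant $N$ independent of $u$, $r$, and $Y$.
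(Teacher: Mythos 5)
Your core idea --- freeze the weight at a ball-average $(\omega)_{B}$, treat $(\omega(x) - (\omega)_B)\, a_{ij}D_{ij}u$ as an error term measured in the $\mu$-weighted $L^{n+1}$ norm via the Krylov--Tso ABP inequality, and observe that \eqref{WBMO def} is designed precisely so this error is $O(\epsilon)$ --- is exactly the perturbation mechanism the paper uses in Proposition \ref{growth-1} and Lemma \ref{slant-cylin-lemma}. You also correctly identify that the cylinder geometry encodes $(\mu)_{B_r(y)}^{1/n} \approx (\omega)_{B_r(y)}^{-1}$ via \eqref{A-1-1/n}, and that Lemma \ref{covering-1} replaces the classical Krylov--Safonov covering. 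So the growth-lemma engine you sketch matches the paper's.

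Where you diverge is the final assembly. You propose the textbook Krylov--Safonov route: power-decay of distribution functions $\mu(\{u > M^k\}\cap Q) \lesssim (1-\eta)^k \mu(Q)$ (a weak-$L^{\varepsilon}$ bound), followed by an ad hoc chaining of expansion-of-positivity across overlapping cylinders. The paper instead follows Ferretti--Safonov \cite{FeSa}: it never proves a weak-$L^{\varepsilon}$ estimate. It introduces the distance $d(X) = \sup\{\rho : C_{\rho,\mu}(X) \subset C_{2r,\mu}\}$, maximizes $d^\gamma u$ over a lower sub-cylinder to produce a pivot point $X_0$, uses the contrapositive of the ABP-based pointwise bound \eqref{claim-1} to force a \emph{measure} lower bound $\mu(\{u > u(X_0)/2\}\cap C_{\rho_0,\mu}(X_0)) \geq \delta_0 \mu(C_{\rho_0,\mu}(X_0))$, converts that via Corollary \ref{Growth corollary} (the iterated third growth lemma driven by the covering lemma plus a Landis-type alternative, Lemma \ref{alternative-lemma}) into a lower bound for $u$ on a forward cylinder, and finally propagates the infimum upward in time with the Prop-up Lemma (Corollary \ref{prop-up-lemma}), whose explicit dyadic stair of slant cylinders precisely manages the center-dependence of the time height that you flag as your main obstacle. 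The practical payoff of the paper's route is that the chaining is organized once, carefully, inside Corollary \ref{prop-up-lemma}, with the ratio $r/\rho$ tracked through the constant $\gamma$, rather than being left to an informal ``bounded number of overlapping cylinders'' argument which, in the weighted setting, would require you to verify the two-sided comparability of the time scales $r^2(\mu)_{B_r(y)}^{1/n}$ along the chain --- nontrivial precisely because $(\mu)_{B_r(y)}$ moves with $y$. Your proposal names the obstacle but does not resolve it; the paper's $d^\gamma u$-maximum plus prop-up argument is the device that does.

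One smaller inaccuracy worth noting: you suggest rescaling $\tau = \omega_0 t$ turns the frozen operator into a standard heat operator globally on $C_{2r,\mu}(Y)$. This normalization is only useful locally, since across the iteration the relevant average is $(\omega)_{B_\rho(x_0)}$ over many different balls, and these averages are only comparable (up to $K_0$) rather than equal; the paper therefore does not perform a global change of time variable but instead keeps the weight $\omega$ in the operator throughout and lets the ABP weight $\mu = \omega^{-n}$ absorb the nonuniformity.
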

\noindent
To demonstrate that \eqref{wei-osc-cond} holds for some $\omega$, let us consider the following power weight example
\[
\omega(x) = |x|^\beta, \quad x \in \mathbb{R}^n.
\]
Then, it is well-known that $\omega \in A_{1+\frac{1}{n}}$ when $\beta \in (-n, 1)$. Moreover, it follows from \cite[Lemma A.1, Lemma A.2]{CMP} and Remark \ref{remark-BMO} that there are $K_0 = K_0(n)\geq1$ and $N = N(n)>0$ such that
\[
[\omega]_{A_{1+\frac{1}{n}}} \leq K_0 \quad \text{and} \quad [[\omega]]_{\textup{BMO}(\mathbb{R}^n, \omega)} \leq N |\beta|
\]
for all $\beta$ so that $|\beta| \leq 1/2$. Then, Theorem \ref{Harnack inequality} is applicable for the weight $\omega(x) = |x|^\beta$ when $|\beta|$ is sufficiently small. Again, we stress that Theorem \ref{Harnack inequality} is already new even with this $\omega$ and when $(a_{ij})$ is the identity matrix.
\begin{remark} \label{remark-0912} We would like to point out the following important points.
\begin{itemize}
\item[\textup{(i)}]  The assumptions \eqref{elliptic} and \eqref{wei-osc-cond} are invariant under the standard scaling, translation, and dilation for the heat equations. More importantly, \eqref{elliptic} and \eqref{wei-osc-cond} are also invariant under the changes of variables $(x,t) \mapsto (x+y, \lambda t +s)$ and $(x,t) \mapsto (\lambda x+ y, t+s)$ for $\lambda>0$. Precisely, if $u$ solves  \eqref{PDE-main-theorem}, then $v(x,t) = u(x+y, \lambda t +s)$ solves
\[
v_t - \tilde{\omega}(x) \tilde{a}_{ij}(x, t) D_{ij} v =0 \quad \text{in} \quad C_{2r, \tilde{\mu}}(0),
\]
where $\tilde{\omega}(x) =\lambda \omega(x+y)$,
\[
 \tilde{a}_{ij}(x,t) = a_{ij}(x +y, \lambda t+s), \quad \text{and} \quad \tilde{\mu}(x) = \frac{1}{\tilde{\omega}(x)^n}.
\]
It is not hard to check that $(\tilde{a}_{ij})$ satisfies \eqref{elliptic} and $\tilde{\omega}$ satisfies \eqref{wei-osc-cond}. Similarly, under the change of variable for $(x,t) \mapsto (\lambda x+y, t +s)$, the same conclusion also holds in which 
\[ \tilde{w}(x) = \lambda^{-2} \omega(\lambda x+y) \quad \text{and} \quad  \tilde{a}_{ij}(x,t) = a_{ij}(\lambda x +y,  t+s) .
\]
\item[\textup{(ii)}] If we use the change of variables $(x,t) \mapsto (x+y, \lambda t + s)$ with $\lambda = (\mu)_{B_{2r}(y)}^{1/n}$, then $(\tilde{\mu})_{B_{2r}(0)} =1$, and the equation \eqref{PDE-main-theorem} is reduced to the equation in the standard heat cylinder of radius $2r$ centered at the origin. 
\item[\textup{(iii)}] As we use the perturbation method to prove Theorem \ref{Harnack inequality}, for every $X_0 = (x_0, t_0) \in C_{2r, \mu}(Y)$, and for small $\rho >0$, we freeze $\omega$ and perturb it by $(\omega)_{B_\rho(x_0)}$. See \eqref{Lv estimate} and \eqref{perturb-1-0921} below. We then use the ABP estimates and suitable barrier functions to derive needed estimates on solutions. To this end, we need the smallness condition on the mean oscillation of $\omega$ in \eqref{wei-osc-cond}. It is not clear if this smallness condition can be removed.
\end{itemize}
\end{remark}
\smallskip
Next, we state two corollaries of Theorem \ref{Harnack inequality}.  The first corollary is about the H\"{o}lder regularity estimates of solutions. 
To this end, we introduce the quasi-distance $\rho_{\omega}: \R^{n+1}\times \R^{n+1}\rightarrow [0,\infty)$ that endows the class of parabolic cylinders defined in \eqref{intro-cylinder}. For a given $y\in \R^{n}$, and let $\phi_y : [0, \infty) \rightarrow [0, \infty)$ be defined by 
\begin{equation} \label{phi-y}
\phi_y(\tau)=\tau^2(\mu)_{B_\tau(y)}^{1/n}=\sigma_n^{-1/n} \tau\mu(B_\tau(y))^{1/n}, \quad  \tau \in [0, \infty), 
\end{equation}
where $\sigma_n$ is the Lebesgue measure of the unit ball $B_1 \subset \mathbb{R}^{n}$. 
As $\mu=\omega^{-n}$ and $\omega \in A_{1+\frac{1}{n}}$, 
we see that $\phi_y$ is a continuous and strictly increasing function,
\[
\phi_y(0)=0 \quad \text{and} \quad \phi_y (\tau) \to  + \infty \text{ as } 
\tau \to  + \infty . 
\]
Therefore, its inverse function $\phi_y^{-1}: [0, \infty) \rightarrow [0, \infty)$ exists.  
 
\smallskip
Now, for any $Y = (y, s)$ and $X=(x,t)$ in $\R^{n+1}$ such that $t\leq s$ and $X \not=Y$, we define
\[
\begin{split}
\bar{\rho}_{\omega}(X, Y)^2 & = \left[\max\big\{|x-y|, \phi_y^{-1}(s-t)  \big\}\right]^2 \\
& \quad +\min\big\{|x-y|^2, (\mu)_{B_{\Theta_Y}(y)}^{-1/n}\cdot(s-t) \big\},
\end{split}
\]
where $\Theta_Y$ is the smallest radius $r>0$ such that $X\in \overline{C_{r,\mu}(Y)}$. 
More specifically, 
\begin{equation}\label{theta-introc}
\begin{split}
\Theta_Y =\Theta_{Y}(X) & = \inf\big\{r>0: X\in \overline{C_{r,\mu}(Y)}\big\} \\
& = \max\big\{|x-y|, \phi^{-1}_y(s-t)\big\} . 
\end{split}
\end{equation}
We define
\begin{equation}   \label{quasi-metric-intro}
 \rho_{\omega}(X, Y) =\left\{
 \begin{array}{ll}
  \bar{\rho}_\omega (X, Y) &  \quad \text{if} \quad X \not=Y  \quad \text{and} \quad t \leq s, \\
    \bar{\rho}_\omega (Y, X) &  \quad \text{if} \quad X \not=Y  \quad \text{and} \quad s < t, \\
    0 & \quad \text{if} \quad X=Y.
\end{array} \right.
\end{equation} 
It turns out that $\rho_\omega$ defines a quasi-metric on $\mathbb{R}^{n+1}$ as pointed out in the following remark, whose proof is just a direct calculation and then skipped.
\begin{remark}\label{quasi-metric lemma}
The function $\rho_{\omega}$ defined in \eqref{quasi-metric-intro}  is a quasi-metric. In particular,
\[
\rho_{\omega}(X,Z)\leq 2\big[\rho_{\omega}(X,Y)+\rho_{\omega}(Y,Z)\big]
\]
for every $X,\ Y,\ Z \in \R^{n} \times \mathbb{R}$. Moreover, it is not hard to verify that
\begin{equation}\label{d-r}
\Theta_Y(X) \leq \rho_{\omega}(X,Y)\leq \sqrt {2} \Theta_Y(X),
\end{equation}
for all $X=(x, t),\ Y=(y, s)\in \R^{n+1}$ with $t\leq s$, where $\Theta_Y(X)$ is defined in \eqref{theta-introc}.
\end{remark}
Next, for $\alpha \in (0,1]$, and for a function $u$ defined on a non-empty bounded open set $\Omega \subset\mathbb{R}^{n+1}$,  our weighted H\"{o}lder norm $\|u\|_{\mathscr{H}^{\alpha,\alpha/2}_\omega(\Omega)}$ of $u$ with respect to the weight $\omega$ is defined by 
\begin{equation}
\|u\|_{\mathscr{H}^{\alpha,\alpha/2}_\omega(\Omega)}=\|u\|_{L^{\infty}(\Omega)}+[[u]]_{\mathscr{H}^{\alpha,\alpha/2}_{\omega}(\Omega)},
\end{equation}
where
\begin{equation}\label{Holder semi-norm}
[[u]]_{\mathscr{H}^{\alpha,\alpha/2}_{\omega}(\Omega)}=\sup_{\substack{X\neq Z\\ X, Z\in \Omega}}\frac{|u(X)-u(Z)|}{\rho_{\omega}(X,Z)^{\alpha}}.
\end{equation}

\smallskip
We now state our result on the H\"{o}lder continuity of solutions to $\mathcal{L}u=0$, which is a consequence of Theorem \ref{Harnack inequality}.
\begin{corollary}[H\"{o}lder regularity]\label{Holder regularity} 
Under the assumptions in Theorem \ref{Harnack inequality} and for $q \in (0, \infty)$, there exist $\alpha=\alpha(n, \nu, K_0)\in(0,1]$ and $N=N(n,\nu, K_0, q)>0$ such that if $u\in C^{2,1}(C_{2r,\mu}(Y))$ solves $\mathcal{L}u=0$ in $C_{2r, \mu}(Y)$ for some $r>0$ and $Y \in \mathbb{R}^{n+1}$, we have
\begin{equation} \label{L-infty-est-intro}
\|u\|_{L^{\infty}(C_{r, \mu}(Y))} \leq N \Big(\frac{1}{\mu(C_{2r, \mu}(Y))} \int_{C_{2r, \mu}(Y)} |u(x,t)|^q \mu(x)\, dxdt\Big)^{1/q},
\end{equation}
and
\begin{equation}\label{Holder}
[[u]]_{\mathscr{H}^{\alpha,\alpha/2}_\omega(C_{r,\mu}(Y))}\leq N  r^{-\alpha} \Big(\frac{1}{\mu(C_{2r, \mu}(Y))} \int_{C_{2r, \mu}(Y)} |u(x,t)|^q \mu(x)\, dxdt\Big)^{1/q}.
\end{equation}
\end{corollary}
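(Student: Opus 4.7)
My plan is to derive both \eqref{L-infty-est-intro} and \eqref{Holder} as consequences of Theorem \ref{Harnack inequality}, following the classical Krylov--Safonov route: first establish an oscillation-decay bound that yields a weighted H\"{o}lder estimate with $\|u\|_{L^\infty}$ on the right-hand side, and then convert that $L^\infty$ control into the weighted $L^q$ average via a standard rescaling/doubling argument.

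To produce the oscillation decay, I would fix $Y' = (y', s') \in \overline{C_{r, \mu}(Y)}$ and $\rho > 0$ small enough that $C_{2\rho, \mu}(Y') \subset C_{2r, \mu}(Y)$. By Remark \ref{remark-0912}(i) and the monotonicity of the weighted BMO supremum as its domain shrinks, the hypotheses \eqref{elliptic} and \eqref{wei-osc-cond} of Theorem \ref{Harnack inequality} are inherited on $C_{2\rho, \mu}(Y')$. Setting $M_\rho = \sup_{C_{\rho,\mu}(Y')} u$ and $m_\rho = \inf_{C_{\rho,\mu}(Y')} u$, the non-negative solutions $M_\rho - u$ and $u - m_\rho$ both fall under Theorem \ref{Harnack inequality} on $C_{2\rho,\mu}(Y')$, and applying the theorem to each and subtracting produces, on a suitable sub-cylinder $U'_\rho \subset C_{\rho,\mu}(Y')$,
\begin{equation*}
\sup_{U'_\rho} u - \inf_{U'_\rho} u \;\leq\; \theta\,(M_\rho - m_\rho)
\end{equation*}
for some $\theta = \theta(n, \nu, K_0) \in (0, 1)$. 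Iterating this dyadically along $\rho_\omega$-balls centered at $Y'$ and invoking the equivalence $\Theta_{Y'}(X) \leq \rho_\omega(X, Y') \leq \sqrt{2}\,\Theta_{Y'}(X)$ from \eqref{d-r} furnishes an exponent $\alpha = \alpha(n, \nu, K_0) \in (0, 1]$ and the intermediate H\"{o}lder bound
\begin{equation*}
[[u]]_{\mathscr{H}^{\alpha, \alpha/2}_\omega(C_{r, \mu}(Y))} \;\leq\; N\, r^{-\alpha}\, \|u\|_{L^\infty(C_{2r, \mu}(Y))}.
\end{equation*}

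To upgrade $\|u\|_{L^\infty}$ into the weighted $L^q$ average, set $M = \|u\|_{L^\infty(C_{r,\mu}(Y))}$ and pick $X_0 \in \overline{C_{r,\mu}(Y)}$ with $|u(X_0)| \geq M/2$. Using the trivial doubling $(\mu)_{B_r(y)} \leq 2^n (\mu)_{B_{2r}(y)}$ one has $C_{r,\mu}(Y) \subset C_{2r,\mu}(Y)$, so $X_0$ sits at a $\rho_\omega$-distance of order $r$ from the parabolic boundary of $C_{2r,\mu}(Y)$. Applying the intermediate H\"{o}lder estimate on a backwards cylinder of radius comparable to $r$ around $X_0$ — which fits inside $C_{2r,\mu}(Y)$ — supplies a universal $\delta = \delta(n, \nu, K_0) \in (0, 1)$ and a cylinder $C_{\delta r, \mu}(X_0) \subset C_{2r, \mu}(Y)$ on which $|u| \geq M/4$. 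Because $\omega \in A_{1+\frac{1}{n}}$ forces $\mu = \omega^{-n} \in A_{n+1}$, the weight $\mu$ is doubling with a constant depending only on $n$ and $K_0$, and hence $\mu(C_{\delta r, \mu}(X_0)) \geq c\,\mu(C_{2r,\mu}(Y))$ for some $c = c(n, K_0) > 0$. Integrating $|u|^q\,\mu$ over $C_{\delta r, \mu}(X_0)$ from below gives
\begin{equation*}
\int_{C_{2r,\mu}(Y)} |u|^q\, \mu(x)\, dx\,dt \;\geq\; c\,(M/4)^q\, \mu(C_{2r,\mu}(Y)),
\end{equation*}
which is \eqref{L-infty-est-intro}. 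Feeding this $L^\infty$-to-$L^q$ bound (applied at a slightly enlarged radius and absorbed by a routine two-scale interpolation) back into the intermediate H\"{o}lder estimate then yields \eqref{Holder}.

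The most delicate point is the iteration used to extract oscillation decay: at every scale $\rho$ and every center $Y' \in C_{r,\mu}(Y)$, one must verify that (a) the averaged weight $(\mu)_{B_\rho(y')}$ controlling the temporal span of $C_{\rho,\mu}(Y')$ rescales compatibly under dyadic subdivision, using only the $A_{1+\frac{1}{n}}$ doubling, and (b) the two sub-cylinders $U_1$ and $U_2$ of Theorem \ref{Harnack inequality} — which are time-separated and strictly slimmer than $C_{r,\mu}(Y')$ — can be chained across a few applications of Harnack into a single nested $\rho_\omega$-ball on which a strict oscillation reduction holds. A secondary technical point is extending \eqref{L-infty-est-intro} from $q \geq 1$ (where the direct integration above applies) to all $q > 0$, which is handled by the standard Moser trick of absorbing an $L^\infty$ power of $u$ into the weighted $L^q$ norm via Young's inequality.
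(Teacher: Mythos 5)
Your oscillation-decay derivation of the intermediate H\"{o}lder bound is essentially the paper's argument (apply Theorem \ref{Harnack inequality} to $M_\rho - u$ and $u - m_\rho$ on nested cylinders and iterate), modulo a small bookkeeping point: $M_\rho - u$ is non-negative on $C_{2\rho,\mu}(Y')$ only if $M_\rho$ denotes the sup over $C_{2\rho,\mu}(Y')$, not over $C_{\rho,\mu}(Y')$, so the quantities being iterated should be defined on the larger cylinder, as the paper does with $M(r_0)$ on $C_{r_0,\mu}(X_0)$.

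The genuine gap is in your derivation of \eqref{L-infty-est-intro} from the intermediate H\"{o}lder estimate. You pick $X_0 \in \overline{C_{r,\mu}(Y)}$ with $|u(X_0)| \geq M/2$ and claim a \emph{universal} $\delta = \delta(n,\nu,K_0)$ for which $|u| \geq M/4$ on $C_{\delta r,\mu}(X_0)$. But your H\"{o}lder seminorm bound reads
\[
|u(X) - u(X_0)| \;\leq\; N\, r^{-\alpha}\,\|u\|_{L^\infty(C_{2r,\mu}(Y))}\; \rho_\omega(X,X_0)^\alpha,
\]
whose constant is proportional to $\|u\|_{L^\infty(C_{2r,\mu}(Y))}$, which is a supremum over a strictly larger cylinder than $C_{r,\mu}(Y)$ and can be arbitrarily large compared to $M = \|u\|_{L^\infty(C_{r,\mu}(Y))}$. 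To force the right-hand side below $M/4$ one needs $\rho_\omega(X,X_0) \lesssim r\,\big(M/\|u\|_{L^\infty(C_{2r,\mu}(Y))}\big)^{1/\alpha}$, so the radius on which $|u|\geq M/4$ degenerates with the ratio $M/\|u\|_{L^\infty(C_{2r,\mu}(Y))}$ and cannot be made universal. This is why, in the paper, the $L^\infty$ bound \eqref{L-infty-est-intro} is established \emph{independently} of any H\"{o}lder argument, directly from Corollary \ref{cor-1} (itself a consequence of the first growth lemma and the ABP estimate, not of the Harnack inequality), and only afterward does the Harnack-based oscillation decay give \eqref{Holder}, normalizing $u$ so that the weighted $L^q$ average is $\leq 1/2$ and then combining with the $L^\infty$ control. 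Your intended route --- extract $L^\infty$ from the H\"{o}lder estimate --- runs in the wrong direction and would need a separate pointwise-to-mean bound (such as Corollary \ref{cor-1}) to close.
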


\smallskip
Here is another consequence of Theorem \ref{Harnack inequality}.
\begin{corollary}[Liouville theorem]\label{Liouville theorem}
Under the assumptions in Theorem \ref{Harnack inequality} and 
\[
[[\omega]]_{\textup{BMO}(\R^n, \omega)} \leq \epsilon.
\]
If  $u\in C^{2,1}(\R^n\times (-\infty,0))\cap L^{\infty}(\R^n\times (-\infty,0))$ satisfies $\mathcal {L}u=0$ in $\R^n\times (-\infty,0)$, then $u$ is a constant function.
\end{corollary}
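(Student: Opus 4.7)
The plan is to derive the Liouville property as a direct consequence of the weighted parabolic H\"{o}lder estimate in Corollary \ref{Holder regularity}, exploited on a sequence of cylinders $C_{r,\mu}(Y)$ whose radii $r$ tend to infinity.

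Fix any two points $X_1 = (x_1,t_1)$ and $X_2 = (x_2,t_2)$ in $\mathbb{R}^n \times (-\infty,0)$. I would choose $s_0 \in \mathbb{R}$ with $\max\{t_1,t_2\} < s_0 < 0$ and set $Y = (0, s_0)$. For every $r > \max\{|x_1|,|x_2|\}$, both $X_1$ and $X_2$ lie in the open cylinder $C_{r,\mu}(Y) = B_r(0) \times \bigl(s_0 - r^2(\mu)_{B_r(0)}^{1/n},\, s_0\bigr)$, and the enlarged cylinder $C_{2r,\mu}(Y)$ remains contained in $\mathbb{R}^n \times (-\infty,0)$ since $s_0 < 0$. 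To apply Corollary \ref{Holder regularity} with constants independent of $r$, I would check that its hypotheses hold uniformly: the ellipticity \eqref{elliptic} and the bound $[\omega]_{A_{1+\frac{1}{n}}} \le K_0$ are global assumptions, and the standing hypothesis $[[\omega]]_{\textup{BMO}(\mathbb{R}^n,\omega)} \le \epsilon$ of Corollary \ref{Liouville theorem} immediately restricts to $[[\omega]]_{\textup{BMO}(B_{4r}(0),\omega)} \le \epsilon$ for every $r > 0$. Hence \eqref{Holder} applies on each $C_{2r,\mu}(Y)$ with one fixed pair of constants $(\alpha, N)$.

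Taking, say, $q = 1$ in \eqref{Holder} and using $|u| \le \|u\|_{L^\infty(\mathbb{R}^n \times (-\infty,0))}$ to bound the weighted $L^q$ average on the right-hand side by $\|u\|_{L^\infty}$, the definition \eqref{Holder semi-norm} of the H\"{o}lder seminorm yields
\[
|u(X_1) - u(X_2)| \;\le\; N\, r^{-\alpha}\, \|u\|_{L^\infty(\mathbb{R}^n \times (-\infty,0))}\, \rho_\omega(X_1,X_2)^{\alpha}
\]
for all sufficiently large $r$. Since $\rho_\omega(X_1,X_2)$ is fixed and finite and $\alpha > 0$, sending $r \to \infty$ forces $u(X_1) = u(X_2)$; arbitrariness of $X_1,X_2$ then gives that $u$ is constant on $\mathbb{R}^n \times (-\infty,0)$. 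The only point requiring real care is the uniformity in $r$ of the H\"{o}lder constants in Corollary \ref{Holder regularity}, which is precisely what the global BMO hypothesis is engineered to deliver; once that is secured the argument has no further obstacle.
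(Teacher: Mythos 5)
Your proof is correct and follows essentially the same route as the paper: both derive the Liouville property from the oscillation decay underlying Corollary \ref{Holder regularity}, using the global boundedness of $u$ and sending the cylinder radius to infinity. The paper re-runs the oscillation-decay iteration $\phi(r) \le \bigl(\tfrac{N-1}{N}\bigr)^k\phi(4^k r)$ on cylinders centered at the origin, while you invoke the already-stated weighted H\"older seminorm bound $[[u]]_{\mathscr{H}^{\alpha,\alpha/2}_\omega(C_{r,\mu}(Y))}\le N r^{-\alpha}\|u\|_{L^\infty}$ as a black box and compare two fixed points -- the same mechanism packaged slightly more directly.
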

\section{Preliminary analysis} \label{Prelim-section}
\subsection{Muckenhoupt class of weights} 

In this subsection, we recall the definition of the Muckenhoupt class of  \texorpdfstring{$A_p$}{Ap} weights, and point out some of its important properties that are needed in the paper. For  a given non-negative locally integrable function $\sigma: \mathbb{R}^n \rightarrow [0,\infty)$, and for each measurable set $E \subset \mathbb{R}^n$,  we recall that
\begin{equation} 
\sigma(E) = \int_{E} \sigma(x) dx \quad \text{and} \quad (\sigma)_E = \frac{\sigma(E)}{|E|},
\end{equation}
where $|E|$ denotes the Lebesgue measure of $E$. Also, recall that $B_\rho(x_0)$ denotes the open ball in $\mathbb{R}^n$ of radius $\rho>0$ and centered at $x_0 \in \mathbb{R}^n$.
\begin{definition} \label{A-p-def} Let $p \in [1, \infty)$ and $\sigma$ be a non-negative locally integrable function defined on $\mathbb{R}^n$. We say that $\sigma$ belongs to $A_p$ if  $[\sigma]_{A_p}< \infty$, where
\begin{align*}
[\sigma]_{A_p} = 
\left\{
\begin{array}{ll} \displaystyle{
\sup_{\substack{\rho > 0 \\ x_0 \in \mathbb{R}^{n}}} 
(\sigma)_{B_\rho(x_0)} (\sigma^{-1/(p-1)})_{B_\rho(x_0)}^{p-1}}  & \text{for} \quad 1 < p < \infty, \\
 \displaystyle{\sup_{\substack{\rho > 0 \\ x_0 \in \mathbb{R}^{n}}} 
(\sigma)_{B_\rho(x_0)} \| \sigma^{-1} \|_{L^{\infty}(B_\rho(x_0))}}  & \text{for} \quad p = 1.
\end{array} \right.
\end{align*}
\end{definition}

\smallskip
We note that for $p\in (1,\infty)$, it follows directly from the definition that  if $\sigma \in A_p$, then the weight $\sigma^{-1/(p-1)}$ is in $A_{p'}$ satisfying 
\[
[\sigma^{-1/(p-1)}]_{A_{p'}}=[\sigma]_{A_p}^{1/(p-1)}, \quad \text{where} \quad p'=p/(p-1).
\]
As such, for $\mu$ defined in \eqref{mu-def}, we have $\mu \in A_{n+1}$ when $\omega \in A_{1+\frac{1}{n}}$ and
\begin{equation}\label{A-p' weight}
[\mu]_{A_{n+1}}=[\omega]_{A_{1+\frac{1}{n}}}^n.
\end{equation}

We also observe that if $\sigma \in A_p$ then $[\sigma]_{A_p} \geq 1$. 
In fact, when $1<p<\infty$, for every $x_0 \in \mathbb{R}^n$ and $\rho>0$, it follows from H\"{o}lder's inequality that 
\begin{align*}
1 & = \frac{1}{|B_\rho(x_0)|} \int_{B_\rho(x_0)} dx =  \frac{1}{|B_\rho(x_0)|} \int_{B_\rho(x_0)} \sigma(x)^{\frac{1}{p}}\sigma(x)^{-\frac{1}{p}}  dx  \\
& \leq \Big(\frac{1}{|B_\rho(x_0)|}\int_{B_\rho(x_0)}\sigma(x) dx \Big)^{\frac{1}{p}} \Big(\frac{1}{|B_\rho(x_0)|} \int_{B_\rho(x_0)}\sigma(x)^{-\frac{1}{p-1}} dx \Big)^{\frac{p-1}{p}} \\
& \leq [\sigma]_{A_p}^{\frac{1}{p}}.
\end{align*}
As a result, we see that for $\omega$ defined in \eqref{L-def} and $\mu$ defined in \eqref{mu-def}, if $\omega \in A_{1+\frac{1}{n}}$, then
\begin{equation} \label{A-1-1/n}
1 \leq  (\omega)_{B_\rho(x_0)} (\mu)_{B_\rho(x_0)}^{1/n}  \leq [\omega]_{A_{1+\frac{1}{n}}}\quad \text{for all}\quad B_\rho(x_0)\subset \R^n.
\end{equation}

\smallskip
We now recall the following result on the doubling property of the $A_p$ class of weights.  For more details, see \cite[Proposition 7.1.5, p. 504]{Grafakos-2} for instance.
\begin{lemma}\label{A-p doubling} Let $\sigma \in A
_p$ with $p \in [1,\infty)$. Then, for every ball $B\subset \R^n$ and every measurable set $A$ with $|A \cap B| \not=0$, we have
\begin{equation} \label{mu-doubling}
\sigma(B) \leq [\sigma]_{A_p} \Big(\frac{|B|}{|A\cap B|}\Big)^{p} \sigma(A\cap B).
\end{equation} 
\end{lemma}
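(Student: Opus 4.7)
The plan is to prove the two cases $p=1$ and $p>1$ separately, both times by applying the very definition of $[\sigma]_{A_p}$ on the ball $B$ and then estimating $|A\cap B|$ from below in terms of $\sigma(A\cap B)$ via H\"{o}lder's inequality (for $p>1$) or the pointwise essential lower bound on $\sigma$ (for $p=1$).

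\textbf{Case $p>1$.} I would start from the trivial identity
\[
|A\cap B|=\int_{A\cap B}\sigma(x)^{1/p}\sigma(x)^{-1/p}\,dx
\]
and apply H\"{o}lder's inequality with exponents $p$ and $p'=p/(p-1)$ to get
\[
|A\cap B|\leq \sigma(A\cap B)^{1/p}\Bigl(\int_{A\cap B}\sigma(x)^{-1/(p-1)}\,dx\Bigr)^{(p-1)/p}.
\]
Since $A\cap B\subset B$, the second factor is bounded by the same integral over $B$. The defining inequality $(\sigma)_B(\sigma^{-1/(p-1)})_B^{p-1}\leq [\sigma]_{A_p}$ then gives
\[
\Bigl(\int_B \sigma^{-1/(p-1)}\,dx\Bigr)^{p-1}\leq [\sigma]_{A_p}\,\frac{|B|^p}{\sigma(B)},
\]
so combining and raising to the power $p$ yields
\[
|A\cap B|^p\leq [\sigma]_{A_p}\,\sigma(A\cap B)\,\frac{|B|^p}{\sigma(B)},
\]
which is exactly \eqref{mu-doubling} after rearrangement.

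\textbf{Case $p=1$.} Here the definition reads $(\sigma)_B\cdot \|\sigma^{-1}\|_{L^\infty(B)}\leq [\sigma]_{A_1}$. Since $\sigma(x)\geq \|\sigma^{-1}\|_{L^\infty(B)}^{-1}$ for a.e.\ $x\in B$, integrating over $A\cap B$ gives $\sigma(A\cap B)\geq \|\sigma^{-1}\|_{L^\infty(B)}^{-1}|A\cap B|$, i.e.\ $\|\sigma^{-1}\|_{L^\infty(B)}\geq |A\cap B|/\sigma(A\cap B)$. Plugging this into the $A_1$ condition produces
\[
\frac{\sigma(B)}{|B|}\cdot\frac{|A\cap B|}{\sigma(A\cap B)}\leq [\sigma]_{A_1},
\]
which is \eqref{mu-doubling} with exponent $p=1$.

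There is essentially no obstacle here: the result is a standard consequence of the $A_p$ definition, the only minor subtlety being the correct choice of H\"{o}lder exponents and tracking that the relevant integrals can be enlarged from $A\cap B$ to $B$ without loss (which is why the $A_p$ constant, which lives on balls, can be invoked). Since the argument uses only the $A_p$ inequality on the single ball $B$, nothing further from the paper's machinery is needed, and the proof should fit in a short paragraph for each case.
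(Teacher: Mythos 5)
Your proof is correct and follows the standard argument: apply H\"{o}lder (or, for $p=1$, the essential lower bound on $\sigma$) to relate $|A\cap B|$ to $\sigma(A\cap B)$ and the dual integral, then invoke the $A_p$ condition on the ball $B$. The paper does not prove this lemma itself but cites \cite[Proposition 7.1.5]{Grafakos-2}, whose proof is precisely the one you give, so your approach coincides with the referenced one.
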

From Lemma \ref{A-p doubling} and \eqref{A-p' weight}, we see that when $\omega\in A_{1+\frac{1}{n}}$ with $[\omega]_{A_{1+\frac{1}{n}}}\leq K_0$ for some $K_0 \in [1, \infty)$, then $\mu \in A_{n+1}$, $[\mu]_{A_{n+1}}=[\omega]_{A_{1+\frac{1}{n}}}^n\leq K_0^n$, and 
\begin{equation}\label{doubling property}
\mu(B_{2R}(x_0)\big)\leq2^{n(n+1)}K_0^n\mu\big(B_R(x_0)\big), \quad \forall \ x_0\in \R^n \quad \text{and} \quad \forall \ R >0.
\end{equation}

\smallskip
The following well-known lemma is a consequence of the reverse H\"{o}lder 
property of the $A_p$ weights. 
For example, see \cite[Proposition 7.2.8, p. 521]{Grafakos-2} for details.
\begin{lemma}\label{reverse property} Let $K_0 \in [1, \infty)$ and $p \in [1, \infty)$, then there exist $\lambda_0 = \lambda_0(n, p, K_0)>0$ and $N = N(n, p, K_0)>0$ such that for every $\sigma \in A_p$ satisfying $[\sigma]_{A_p} \leq K_0$, it holds that
\begin{equation*}
\frac{\sigma(S)}{\sigma(B)}\leq N\Big(\frac{|S|}{|B|}\Big)^{\lambda_0}
\end{equation*}
for any ball $B$ and any measurable set $S \subset B$.
\end{lemma}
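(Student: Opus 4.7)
The plan is to deduce this from the classical reverse H\"older inequality for $A_p$ weights, which states that if $\sigma \in A_p$ with $[\sigma]_{A_p} \leq K_0$, then there exist $q = q(n,p,K_0) > 1$ and $C = C(n,p,K_0) > 0$ such that
\[
\left(\fint_B \sigma(x)^{q}\,dx\right)^{1/q} \leq C \fint_B \sigma(x)\,dx
\]
for every ball $B \subset \mathbb{R}^n$. This is stated for instance in Grafakos's book (which the paper already cites for Lemma~\ref{A-p doubling}), so I would invoke it as a black box.

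Given the reverse H\"older inequality, I would let $q' = q/(q-1)$ be the H\"older conjugate and apply H\"older's inequality on $S \subset B$ to obtain
\[
\sigma(S) = \int_B \sigma(x) \chi_S(x)\,dx \leq |S|^{1/q'} \left(\int_B \sigma(x)^q\,dx\right)^{1/q} = |S|^{1/q'} |B|^{1/q} \left(\fint_B \sigma^q\right)^{1/q}.
\]
Then I would use the reverse H\"older estimate to bound the right-hand side by $C |S|^{1/q'} |B|^{1/q} \cdot |B|^{-1} \sigma(B)$, and rearrange to get
\[
\frac{\sigma(S)}{\sigma(B)} \leq C \left(\frac{|S|}{|B|}\right)^{1/q'}.
\]
Setting $\lambda_0 = 1 - 1/q \in (0,1)$ and $N = C$, both depending only on $n,p,K_0$, yields the desired inequality.

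Essentially no step is an obstacle here: the only nontrivial input is the reverse H\"older inequality, whose proof relies on the Calder\'on--Zygmund decomposition and the $A_p$ condition and is entirely standard. The rest is a one-line application of H\"older's inequality. If one wanted a self-contained argument, the main work would be reproducing the reverse H\"older proof, but citing the standard reference seems most consistent with how Lemma~\ref{A-p doubling} was handled just above.
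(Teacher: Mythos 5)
Your argument is correct and is precisely the standard deduction that the paper itself alludes to: the paper gives no proof, simply stating that the lemma "is a consequence of the reverse H\"older property of the $A_p$ weights" and citing Grafakos, Proposition~7.2.8. Your two-step derivation (reverse H\"older inequality followed by H\"older's inequality on $S\subset B$, yielding $\lambda_0 = 1/q'$ and $N=C$) is exactly the expected argument behind that citation.
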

\subsection{Weighted parabolic cylinders} \label{cylinders-def-ses}  Let $\mu: \mathbb{R}^n \rightarrow \mathbb{R}$ be a non-negative locally integrable function. For any $Y = (y, s) \in \mathbb{R}^{n+1}$ and $r > 0$, let us recall that $C_{r,\mu}(Y)$ is the weighted parabolic cylinder of radius $r$ centered at $Y$ with respect to the weight $\mu$ and it  is defined by
\begin{equation}\label{cylinder def-1}
\begin{split}
C_{r,\mu}(Y) & = B_r(y) \times \big(s-r^2(\mu)_{B_r(y)}^{1/n}, s\big) \\
& = B_r(y) \times \big(s- \sigma^{-1/n}_n r \mu({B_r(y)})^{1/n}, s\big) ,
\end{split}
\end{equation}
where $B_r(y)$ is the open ball in $\mathbb{R}^n$ of radius $r$ centered at $y$, $\sigma_n$ is the Lebesgue measure of the unit ball $B_1 \subset \mathbb{R}^{n}$, and
\[
 (\mu)_{B_r(y)} = \frac{1}{|B_r(y)|} \int_{B_r(y)} \mu(x) dx.
\]
Note that though the weight $\mu$ only depends on $x$-variable, frequently in the paper, for a measurable set $E \subset \mathbb{R}^{n+1}$, we also denote
\[
\mu(E) = \int_{E} \mu(x)\, dxdt.
\]
It then follows immediately from \eqref{cylinder def-1} that
\begin{equation}\label{cylinder measure formula}
\begin{split}
& r^2(\mu)_{B_r(y)}^{1/n}=\sigma_n^{-1/n}r\mu(B_{r}(y))^{1/n}, \quad \text{and} \quad \\
& \mu(C_{r, \mu}(Y))= \sigma_n^{-1/n} r \mu(B_r(y))^{(n+1)/n}.
\end{split}
\end{equation}
These two formulas are frequently used in the paper. Moreover, throughout the paper, when $Y = (0, 0) \in \mathbb{R}^{n+1}$, for simplicity, we omit the centers and write
\[
B_r = B_r(0), \quad \text{and} \quad C_{r,\mu} = C_{r,\mu}(0).
\]

\smallskip
Next, we recall the following definition of the parabolic boundary for a general domain, see \cite{FeSa} for instance. 
\begin{definition} Let $\Omega$ be an open connected set in $\R^{n+1}$. The \textit{parabolic boundary $\partial_p \Omega$} of $\Omega$ is the set of all points $X_0=(x_0,t_0)\in \partial \Omega$ such that there exists a continuous function $x=x(t)$ on an interval $[t_0, t_0+\delta)$ with values in $\R^n$, such that 
\[
x(t_0)=x_0,\quad \text{and}\quad \big(x(t),t\big)\in \Omega\quad \text{for all} \quad t\in(t_0,t_0+\delta).
\]
Here $x=x(t)$ and $\delta>0$ depend on $X_0$. 
\end{definition}
In particular, we have 
\[
\partial_p C_{r, \mu}(Y) = \Big(\overline{B_r(y)} \times \big\{s-r^2(\mu)_{B_r(y)}^{1/n}\big\} \Big) \cup \Big(\partial B_r(y) \times \big(s-r^2 (\mu)_{B_r(y)}^{1/n}, s\big)\Big).
\]

\smallskip
We now conclude this subsection with the following inclusion property for weighted cylinders.
\begin{lemma}\label{cylinder-inclusion} 
Assume that $\mu$ is a non-negative locally integrable function. Then for every $r>0$, $Y = (y,s)\in \mathbb{R}^{n+1}$, and $\theta \in (0,1)$, it holds that
\[   C_{\theta r, \mu}(X_0) \subset C_{r, \mu}(Y),  \quad \forall \   X_0 = (x_0,t_0) \in C_{(1-\theta)r, \mu}(Y).
\]
\end{lemma}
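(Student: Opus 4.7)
The plan is to verify the two factors in the product defining the weighted cylinder separately. Fix $X_0=(x_0,t_0)\in C_{(1-\theta)r,\mu}(Y)$ and let $(x,t)\in C_{\theta r,\mu}(X_0)$ be arbitrary. For the spatial factor, the triangle inequality gives
\[
|x-y|\leq |x-x_0|+|x_0-y|<\theta r+(1-\theta)r=r,
\]
so $x\in B_r(y)$. This also shows $B_{\theta r}(x_0)\subset B_r(y)$, a fact I will reuse below.

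For the temporal factor, the upper endpoint is trivial since $t<t_0\leq s$. For the lower endpoint, by the definitions of $C_{\theta r,\mu}(X_0)$ and $C_{(1-\theta)r,\mu}(Y)$ I have
\[
t>t_0-\theta^2 r^2(\mu)_{B_{\theta r}(x_0)}^{1/n}\quad\text{and}\quad s-t_0<(1-\theta)^2 r^2(\mu)_{B_{(1-\theta)r}(y)}^{1/n},
\]
so it suffices to show
\[
(1-\theta)^2 r^2(\mu)_{B_{(1-\theta)r}(y)}^{1/n}+\theta^2 r^2(\mu)_{B_{\theta r}(x_0)}^{1/n}\leq r^2(\mu)_{B_r(y)}^{1/n}.
\]
Here the key observation is that, thanks to the first identity in \eqref{cylinder measure formula}, each term of the form $\rho^2(\mu)_{B_\rho(z)}^{1/n}$ equals $\sigma_n^{-1/n}\rho\,\mu(B_\rho(z))^{1/n}$, so the quadratic weights $(1-\theta)^2$ and $\theta^2$ collapse to the linear weights $(1-\theta)$ and $\theta$. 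The desired inequality therefore reduces to
\[
(1-\theta)\mu(B_{(1-\theta)r}(y))^{1/n}+\theta\,\mu(B_{\theta r}(x_0))^{1/n}\leq \mu(B_r(y))^{1/n}.
\]

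Finally, both $B_{(1-\theta)r}(y)\subset B_r(y)$ and (by the spatial inclusion established above) $B_{\theta r}(x_0)\subset B_r(y)$, so nonnegativity of $\mu$ gives $\mu(B_{(1-\theta)r}(y))\leq\mu(B_r(y))$ and $\mu(B_{\theta r}(x_0))\leq\mu(B_r(y))$; the right-hand side then bounds the left by the convex combination $(1-\theta)+\theta=1$. There is no real obstacle here; the one subtlety worth flagging is that the proof genuinely exploits the specific normalization $r^2(\mu)_{B_r(y)}^{1/n}=\sigma_n^{-1/n}r\mu(B_r(y))^{1/n}$, which is why the length of the time interval scales linearly (not quadratically) in the radius and why the natural nesting with $\theta+(1-\theta)=1$ succeeds.
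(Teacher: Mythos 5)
Your proof is correct and follows essentially the same route as the paper's: the triangle inequality gives the spatial inclusion, and the temporal lower bound reduces, after using the identity $\rho^2(\mu)_{B_\rho(z)}^{1/n}=\sigma_n^{-1/n}\rho\,\mu(B_\rho(z))^{1/n}$, to $(1-\theta)\mu(B_{(1-\theta)r}(y))^{1/n}+\theta\,\mu(B_{\theta r}(x_0))^{1/n}\leq \mu(B_r(y))^{1/n}$, which holds by monotonicity since both balls lie in $B_r(y)$. The paper writes the time intervals directly in the normalized form $\sigma_n^{-1/n}\rho\,\mu(B_\rho)^{1/n}$ rather than spelling out the collapse of quadratic to linear weights, but the underlying argument is identical.
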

\begin{proof} Let \(X_0 = (x_0, t_0) \in  C_{(1-\theta)r, \mu}(Y) \) be fixed. Note that by the definition and property of the weighted cylinders in \eqref{cylinder measure formula}, it follows that
\begin{equation} \label{cylinder def-2}
\begin{split}
C_{\theta r,\mu}(X_0) &= B_{\theta r}(x_0) \times \big(t_0 - \sigma_n^{-1/n} \theta r\mu(B_{\theta r}(x_0))^{1/n}, t_0\big), \notag \\
C_{(1-\theta) r,\mu}(Y) &= B_{(1-\theta) r}(y) \times \big(s - \sigma_n^{-1/n} (1-\theta)r\mu(B_{(1-\theta)r}(y))^{1/n}, s\big), \notag \\
C_{r,\mu}(Y) &= B_{r}(y) \times \big(s - \sigma_n^{-1/n} r\mu(B_{r}(y))^{1/n}, s\big), 
\end{split}
\end{equation}
where $\sigma_n$ is the Lebesgue measure of the unit ball in $\mathbb{R}^n$. 

\smallskip
Then, as $X_0 \in C_{(1-\theta)r, \mu}(Y)$, it follows that
\begin{equation} \label{X-zero-in}
x_0 \in B_{(1-\theta)r}(y) \quad \text{and} \quad  s - \sigma_n^{-1/n} (1-\theta)r\mu(B_{(1-\theta)r}(y))^{1/n} < t_0 < s.
\end{equation} 
By the triangle inequality, it follows that \( B_{\theta r}(x_0) \subset B_{r}(y) \). 
Since $t_0 < s$ due to \eqref{X-zero-in},  
it follows that we only need to verify that
\begin{equation}\label{time inclusion}
s - \sigma_n^{-1/n} r\mu(B_{r}(y))^{1/n} \leq t_0 - \sigma_n^{-1/n} \theta  r\mu(B_{\theta r}(x_0))^{1/n}.
\end{equation}
Note that by the left inequality in the second assertion in \eqref{X-zero-in}, we have
\[
t_0 - \sigma_n^{-1/n} \theta  r\mu(B_{\theta r}(x_0))^{1/n} > s - \sigma_n^{-1/n} r\Big\{(1-\theta) \mu(B_{(1-\theta)r}(y))^{1/n} + \theta  \mu(B_{\theta r}(x_0))^{1/n}\Big\}.
\]
Hence, \eqref{time inclusion} follows as
\[
(1-\theta) \mu(B_{(1-\theta)r}(y))^{1/n} + \theta  \mu(B_{\theta r}(x_0))^{1/n} \leq  \mu(B_{r}(y))^{1/n},
\]
because both $B_{(1-\theta)r}(y)$ and $B_{\theta r}(x_0)$ are subsets of $B_{r}(y)$.
\end{proof}
\subsection{Function spaces and parabolic Aleksandrov-Bakelman-Pucci estimates} Let $q \in [1, \infty)$,  $\Omega \subset \mathbb{R}^{n+1}$ be an open domain, and let  $\sigma$ be a locally integrable function defined on $\Omega$. The weighted Lebesgue space $L^q(\Omega, \sigma)$ is defined by
\[
L^q(\Omega, \sigma) = \Big\{g: \Omega \rightarrow \mathbb{R} \quad \text{measurable}\ : \|g\|_{L^q(\Omega, \sigma)} <\infty \Big\},
\]
where
\[
\|g\|_{L^{q}(\Omega, \sigma)} = \left( \int_{\Omega} |g(x,t)|^q \sigma(x,t) dxdt\right)^{1/q}
\]
for a given measurable function $g: \Omega \rightarrow \mathbb{R}$. We also denote by $L^q_{\text{loc}}(\Omega, \sigma)$ the set of all measurable functions $g: \Omega \rightarrow \mathbb{R}$ such that  $\|g\|_{L^q(K, \sigma)} <\infty$ for every compact set $K \subset \Omega$. 

\smallskip
We note that the space $L^q(\Omega, \sigma)$ is reduced to the standard Lebesgue space $L^q(\Omega)$ when  $\sigma =1$. We also denote by $W^{2,1}_q(\Omega)$ the set of all measurable functions $g \in L^q(\Omega)$ such that its weak derivatives $g_t, D_{i}g, D_{ij} g$ exist and they are in $L^q(\Omega)$ for all $i, j=1,2,\ldots, n$. In a similar definition, we have the space $W^{2,1}_{q, \text{loc}}(\Omega)$, the set of all measurable functions $g:\Omega \rightarrow \mathbb{R}$ such that $g \in W^{2,1}_{q}(K)$ for every compact set $K \subset \Omega$.

\smallskip
Throughout the paper, we also denote by $C(\Omega)$ the space of all continuous functions $g: \Omega \rightarrow \mathbb{R}$. Similarly, $C^{2,1}(\Omega)$ is the space of all $g \in C(\Omega)$ such that all $g_t, D_i g, D_{ij}g$ are in $C(\Omega)$ for all $i, j=1,2,\ldots, n$.

\smallskip
The following well-known Aleksandrov-Bakelman-Pucci type estimates in the parabolic setting were first proved by N. V. Krylov in \cite{Krylov-5}, and then independently by K. Tso in \cite{Tso-10}. The version below can be found in \cite[Theorem 5, p. 34]{Krylov-book} and \cite[Theorem 7.1, p. 156]{Liber}.
\begin{theorem}[ABP estimates]\label{ABP theorem}  
Let $\nu \in (0,1)$, and $\Omega\subset \R^{n+1}$. There exists a constant $N = N(n, \nu)>0$ such that the following assertion holds. 
Assume that \eqref{elliptic} holds in $\Omega$, and suppose there is $R>0$ such that
\[ 
x \in B_R, \quad \forall \ (x,t) \in \Omega.
\] 
Then, for every  $u\in W^{2,1}_{n+1, loc}(\Omega) \cap C(\overline{\Omega})$ satisfying
\[
\mathcal L u \leq f\quad \text{in } \Omega,\quad \text{and}\quad u\leq 0\quad \text{on } \partial_{p}\Omega,
\]
it holds that
\begin{equation*} 
\sup_{\Omega} u^{+} \leq NR^{n/(n+1)}||f^+||_{L^{n+1}(\Omega^+, \mu)},
\end{equation*}
where $\Omega^+=\Omega\cap \{u>0\}$.
\end{theorem}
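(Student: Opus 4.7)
The plan is to view $\mathcal{L} u \leq f$ as an inequality of the form $u_t - \tilde a_{ij}(x,t) D_{ij} u \leq f$ with $\tilde a_{ij}(x,t) := \omega(x)\, a_{ij}(x,t)$, and to apply the classical Krylov--Tso parabolic ABP estimate directly to this inequality, \emph{without} trying to impose any lower bound on the eigenvalues of $\tilde a_{ij}$. By \eqref{elliptic} and $\omega \geq 0$, the matrix $(\tilde a_{ij})$ is measurable and pointwise nonnegative definite; it can vanish where $\omega = 0$ and blow up where $\omega = \infty$. This is harmless, since the Krylov--Tso argument, based on the concave envelope of $u^+$, Alexandrov's theorem on its contact set, and the arithmetic--geometric inequality, needs only nonnegativity and measurability of the coefficients together with $u \in W^{2,1}_{n+1,\mathrm{loc}}(\Omega) \cap C(\overline\Omega)$ and $u \leq 0$ on $\partial_p \Omega$, all of which are in our hypotheses.

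Invoking that classical statement for $\tilde a_{ij}$ produces, with a dimensional constant $C(n)$,
\[
\sup_\Omega u^+ \leq C(n)\, R^{n/(n+1)} \Big( \int_{\Omega^+} \frac{|f^+(x,t)|^{n+1}}{\det \tilde a_{ij}(x,t)}\, dx\, dt\Big)^{1/(n+1)}.
\]
The determinant factors as $\det \tilde a_{ij}(x,t) = \omega(x)^n \det a_{ij}(x,t)$, and the lower bound $\nu |\xi|^2 \leq a_{ij}(x,t)\xi_i\xi_j$ in \eqref{elliptic} forces $\det a_{ij}(x,t) \geq \nu^n$. Hence $(\det \tilde a_{ij}(x,t))^{-1} \leq \nu^{-n}\, \omega(x)^{-n} = \nu^{-n}\, \mu(x)$ a.e.\ on $\Omega^+$, and substitution into the previous display yields
\[
\sup_\Omega u^+ \leq C(n)\, \nu^{-n/(n+1)}\, R^{n/(n+1)}\, \|f^+\|_{L^{n+1}(\Omega^+,\,\mu)},
\]
which is exactly the claim with $N = C(n)\, \nu^{-n/(n+1)}$.

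The only point requiring verification is that the classical ABP is genuinely formulated for nonnegative, rather than uniformly elliptic, coefficient matrices. This is precisely the form stated in \cite[Theorem 5, p.~34]{Krylov-book} and \cite[Theorem 7.1, p.~156]{Liber}: the factor $\det \tilde a_{ij}$ appearing in the denominator is the exact mechanism through which non-uniform ellipticity is absorbed into the right-hand side, and no upper bound on the eigenvalues of the coefficient matrix is ever used in the proof. Consequently there is no genuine obstacle here; the two-line determinantal computation above, together with a direct quotation of the classical result applied to $\tilde a_{ij}$, completes the argument.
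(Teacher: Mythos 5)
Your proposal is correct and is exactly the intended justification: the paper does not prove this theorem but simply cites Krylov and Lieberman, whose versions of the parabolic ABP estimate are stated for positive semi-definite measurable coefficient matrices with the factor $(\det \tilde a_{ij})^{-1/(n+1)}$ in the $L^{n+1}$ norm on the right-hand side. Your two-line computation $\det(\omega\, a_{ij}) = \omega^n \det(a_{ij}) \geq \nu^n \omega^n$, hence $(\det \tilde a_{ij})^{-1} \leq \nu^{-n}\mu$ on $\Omega^+$, is precisely what converts the cited classical estimate into the weighted form stated here, with $N = C(n)\nu^{-n/(n+1)}$. No gap; this fills in the small reformulation that the paper leaves implicit.
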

As a simple corollary, we obtain the maximum principle for the
parabolic operator $\mathcal{L}$, which will be used in the paper.
\begin{corollary} \label{maximum principle}
In addition to the assumptions of Theorem~\ref{ABP theorem}, 
we assume that $f=0$, namely $ \mathcal L u \leq 0$, then 
we have  $u \leq 0$ on $\Omega$.
\end{corollary}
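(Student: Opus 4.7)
The plan is to deduce Corollary \ref{maximum principle} as an immediate specialization of Theorem \ref{ABP theorem} with the forcing term $f \equiv 0$. Every structural hypothesis of the ABP estimate is already in force: the uniform ellipticity and boundedness bound \eqref{elliptic} on $(a_{ij})$, the existence of $R>0$ with $x \in B_R$ for all $(x,t) \in \Omega$, the regularity $u \in W^{2,1}_{n+1,\mathrm{loc}}(\Omega) \cap C(\overline{\Omega})$, and the sign condition $u \leq 0$ on $\partial_p \Omega$. The assumed inequality $\mathcal{L} u \leq 0$ then fits the template $\mathcal{L} u \leq f$ with $f \equiv 0$, so no additional verification is required.

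Applying Theorem \ref{ABP theorem} with this choice of $f$ gives
\[
\sup_\Omega u^+ \leq N R^{n/(n+1)} \|f^+\|_{L^{n+1}(\Omega^+,\mu)} = 0,
\]
since $f^+ \equiv 0$ makes the weighted norm on the right-hand side vanish identically, independently of any properties of the weight $\mu$ (indeed, local integrability of $\mu$, coming from $\omega \in A_{1+\frac{1}{n}}$, ensures the norm is well-defined, but this is not needed when $f = 0$). Hence $u^+ \equiv 0$ on $\Omega$, which is exactly the conclusion $u \leq 0$ on $\Omega$. There is essentially no obstacle to overcome: the only conceivable subtlety would be if passing from a quantitative ABP-type bound to a qualitative maximum principle required a separate approximation argument in the degenerate/singular weighted setting, but Theorem \ref{ABP theorem} is already formulated with the classical unweighted $L^{n+1}$-type right-hand side, so the implication is direct.
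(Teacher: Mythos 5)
Your proof is correct and is the same immediate specialization the paper intends (the paper states the corollary without an explicit proof, as a direct consequence of Theorem~\ref{ABP theorem} with $f\equiv 0$): setting $f=0$ makes the right-hand side vanish, so $\sup_\Omega u^+=0$, i.e.\ $u\leq 0$ on $\Omega$.
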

\subsection{Covering lemma} Let $\mu$ be defined as in \eqref{mu-def}, and $Y=(y,s)\in \R^{n+1}$. For a given measurable set $\Gamma\subset \mathbb{R}^{n+1}$ and a fixed constant $\delta_0 \in (0,1)$, let us define
\begin{equation}\label{cylinders set}
\mathcal{A}  =\Big\{C = C_{r, \mu}(Y): \mu(C \cap \Gamma) \geq (1-\delta_0) \mu(C) \Big\},
\end{equation}
and 
\begin{equation}\label{set E}
E =  \displaystyle{\bigcup_{C \in \mathcal{A}} C}.
\end{equation}
Also, for a given number $K_1>1$, for each $C = C_{r, \mu}(Y) \in \mathcal{A}$,  we define
\begin{equation}\label{hat cylinder}
\hat{C}=\hat{C}_{r,\mu}(Y) =B_r(y)\times \big(s +r^2 (\mu)_{B_r(y)}^{1/n}, s + K_1 r^2 (\mu)_{B_r(y)}^{1/n}\big).
\end{equation}
Let us denote
\begin{equation}\label{set hat E}
\hat{E} = \bigcup_{C \in \A} \hat{C}.
\end{equation}
We note that both $\mathcal{A}$ and $E$ depend on $\Gamma$, $\delta_0$, and $\mu$. However, for simplicity in writing, we do not put those factors into $\mathcal{A}$ and $E$. Similarly, $\hat{E}$ also depends on $\Gamma, \delta_0, \mu$, and $K_1$. Moreover, it is also notable to see that $E$ and $\hat E$ defined above are open and measurable due to the opennesses and measurability of $C\in \A$ and the corresponding $\hat C$.

\smallskip
Now we state a covering lemma, whose unweighted case (i.e. $\mu=1$) was proved by N. V. Krylov and M. V. Safonov \cite{KrySa-2}.  See also \cite[Lemmas 5.1-5.2]{FeSa} and \cite[Lemma 9.2.6, p. 173]{Krylov-book} for the unweighted version of the lemma and \cite{Safonov} for the elliptic case.  

\begin{lemma}[Covering Lemma] \label{covering-1} Let $K_0\in[1,\infty)$ and $\delta_0 \in (0,1)$. Suppose that  $\Gamma \subset \mathbb{R}^{n+1}$ is a bounded measurable set, and $\omega \in A_{1+\frac{1}{n}}$ satisfies $[\omega]_{A_{1+\frac{1}{n}}} \leq K_0$. Then, for the set $E$ defined in \eqref{set E}, it holds that
\begin{equation}\label{conclusion covering}
\mu(\Gamma\setminus E) =0 \quad \text{and} \quad 
\mu(E) \geq q_0 \mu(\Gamma)
\end{equation}
for $q_0 = 1 +3^{-(n+1)^2-1}K_0^{-n-1} \delta_0$. Moreover, for the set $\hat E$ defined \eqref{set hat E} with some given $K_1>1$, we also have
\begin{equation}\label{hat-E-E}
\mu\big(\hat{E}\big) \geq q_1 \mu(E) \quad \text{for} \quad q_1 = (K_1-1)(K_1+1)^{-1}.
\end{equation}
\end{lemma}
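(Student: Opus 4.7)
The proof has two parts. For $\mu(\Gamma\setminus E)=0$ and $\mu(E)\geq q_0\mu(\Gamma)$, I plan to combine a Lebesgue-differentiation density step with a stopping-time selection of \emph{maximal} cylinders in $\mathcal A$, followed by a Vitali extraction. For $\mu(\hat E)\geq q_1\mu(E)$ I will use a slicewise interval-geometry argument in the time variable. Throughout $\mu\in A_{n+1}$ with $[\mu]_{A_{n+1}}\leq K_0^n$ by \eqref{A-p' weight}, so $\mu$ is doubling on spatial balls and Lemma~\ref{A-p doubling} is available.

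The density step rests on Lebesgue differentiation: the family of cylinders $C_{r,\mu}(Y)\ni X$ shrinking to $X=(x,t)$ is a Vitali basis for the product measure $\mu(x)\,dx\otimes dt$, so $\mu(C\cap\Gamma)/\mu(C)\to 1$ along such a sequence for $\mu$-a.e.\ $X\in\Gamma$. This places $X$ in some $C\in\mathcal A$, hence in $E$, giving $\mu(\Gamma\setminus E)=0$. For each such $X$ I then define
\[
R(X)=\sup\{r>0:\exists Y,\ X\in C_{r,\mu}(Y)\in\mathcal A\}.
\]
Boundedness of $\Gamma$ forces $R(X)<\infty$ (since $\mu(C_{r,\mu}(Y))\to\infty$ as $r\to\infty$ while $\mu(C\cap\Gamma)\leq\mu(\Gamma)$), and joint continuity of the density functional in $(r,Y)$ together with compactness of admissible $Y$ produces a maximizer $C_*(X)=C_{R(X),\mu}(Y(X))\in\mathcal A$. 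A strict enlargement leaves $\mathcal A$ by definition of $R(X)$, so continuity pins the density of $C_*(X)$ to the threshold exactly, giving the key identity $\mu(C_*(X)\setminus\Gamma)=\delta_0\mu(C_*(X))$.

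I then apply the weighted Vitali extraction proved in Appendix~\ref{Appendix-A} to $\{C_*(X)\}_{X\in\Gamma}$ to obtain a countable disjoint subfamily $\{C_k\}\subset\mathcal A$ with $\Gamma\subset\bigcup_k 3C_k$ up to a $\mu$-null set, where $3C_{r,\mu}(Y):=C_{3r,\mu}(Y)$. Lemma~\ref{A-p doubling} together with the measure formula \eqref{cylinder measure formula} gives $\mu(3C_k)\leq K\mu(C_k)$ with $K:=3^{(n+1)^2+1}K_0^{n+1}$, whence $\sum_k\mu(C_k)\geq\mu(\Gamma)/K$ follows from $\mu(\Gamma)\leq\sum_k\mu(3C_k)\leq K\sum_k\mu(C_k)$. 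Disjointness of the $C_k$'s combined with the exact-density identity yields
\[
\mu(E\setminus\Gamma)\geq\sum_k\mu(C_k\setminus\Gamma)=\delta_0\sum_k\mu(C_k)\geq\delta_0\mu(\Gamma)/K,
\]
and $\mu(E)=\mu(\Gamma)+\mu(E\setminus\Gamma)\geq(1+\delta_0/K)\mu(\Gamma)=q_0\mu(\Gamma)$.

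For the forward-expansion inequality I slice in space: for $x\in\mathbb R^n$, set $E^x=\{t:(x,t)\in E\}$ and $\hat E^x=\{t:(x,t)\in\hat E\}$. Each $C=C_{r,\mu}(Y)\in\mathcal A$ with $x\in B_r(y)$ contributes an interval $(s-\tau,s)$ of length $\tau=r^2(\mu)_{B_r(y)}^{1/n}$ to $E^x$ and an interval $(s+\tau,s+K_1\tau)$ of length $(K_1-1)\tau$ to $\hat E^x$; both sit inside a combined interval of length $(K_1+1)\tau$, within which the forward portion occupies a fraction exactly $q_1=(K_1-1)/(K_1+1)$. A sweep-from-below analysis of the maximal connected components of the union of these combined intervals (each component being a chain of overlapping combined intervals) delivers the slicewise bound $|\hat E^x|\geq q_1|E^x|$, and Fubini against $\mu(x)\,dx$ yields $\mu(\hat E)\geq q_1\mu(E)$. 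The most delicate ingredients are the exact-density equality at the maximal cylinder (which requires joint continuity in $(r,Y)$ and attainment of the supremum) and the slicewise estimate (which requires careful bookkeeping for overlapping combined intervals coming from different cylinders that share the same spatial slice).
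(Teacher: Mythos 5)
Your overall architecture (density point argument; reduction to exact-density cylinders; disjoint selection plus doubling; Fubini/slicing for $\hat E$) tracks the paper's, and your exact-density step — maximize the radius over cylinders in $\mathcal A$ containing a fixed $X$ and pin the density by continuity — is a reasonable variant of the paper's cleaner route, where any $C\in\mathcal A$ is enlarged to exact density by applying the intermediate value theorem to $\theta\mapsto\mu(C_{\theta r,\mu}(Y)\cap\Gamma)/\mu(C_{\theta r,\mu}(Y))$ with the center $Y$ held fixed (no attainment issue to worry about). However, there is a genuine gap in your covering step.

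Your enlargement $3C_{r,\mu}(Y):=C_{3r,\mu}(Y)$ is a backward-in-time cylinder whose top stays at $s$, and that is not large enough. In the Vitali step one needs: if a cylinder $C_{r',\mu}(Y')\in\mathcal A$ with $r'\le R_k$ intersects the selected $C_k=C_{R_k,\mu}(Y^k)$ at some point $(x^*,t^*)$, then $C_{r',\mu}(Y')$ lies inside the enlarged $C_k$. But the top time $s'$ of $C_{r',\mu}(Y')$ can be as large as $t^*+{r'}^2(\mu)_{B_{r'}(y')}^{1/n}< s^k+{r'}^2(\mu)_{B_{r'}(y')}^{1/n}$, which is strictly above $s^k$ — the top of your $3C_k$. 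The paper therefore uses the asymmetric enlargement $\tilde C^i=B_{3R_i}(y^i)\times\bigl(s_i-6R_i^2(\mu)_{B_{3R_i}(y^i)}^{1/n},\,s_i+3R_i^2(\mu)_{B_{3R_i}(y^i)}^{1/n}\bigr)$, which extends forward in time by $3R_i^2(\mu)_{B_{3R_i}(y^i)}^{1/n}$ while having the same $\mu$-measure as $C_{3R_i,\mu}(Y^i)$, and this is why the constant $3^{(n+1)^2+1}K_0^{n+1}$ still comes out. With your symmetric $3C_k$ the containment $\Gamma\subset\bigcup_k 3C_k$ fails and the inequality $\sum_k\mu(C_k)\ge\mu(\Gamma)/K$ is not established. (Relatedly, you invoke "the weighted Vitali extraction proved in Appendix~\ref{Appendix-A}," but Appendix~\ref{Appendix-A} is precisely the proof of this lemma, so the reference is circular; the greedy disjoint selection, the verification that the selected radii tend to zero, and the covering by the correctly shifted enlargements must all be proved directly.) Your slicewise argument for $\mu(\hat E)\ge q_1\mu(E)$ is sketchier than the paper's — which starts from the disjoint interval decomposition of $\hat E(x)$ and shows $E(x)\subset\bigcup_i J_i(x)$ with $|J_i|=\frac{K_1+1}{K_1-1}|I_i|$ — but the idea is essentially the same and could be made rigorous along those lines.
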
 
\noindent
The proof of the lemma follows as that of the un-weighted case with some modifications adjusted to the weighted setting. Since the proof is rather long and it has not appeared anywhere, we provide the details in Appendix \ref{Appendix-A}. 
\section{Growth Lemmas} \label{growth-section}
\subsection{First Growth Lemma}
In this subsection, we state and prove our first growth lemma and its corollary. The following proposition is the main result of the subsection.
\begin{proposition}[First Growth Lemma] \label{growth-1} For  every $K_0 \in [1,\infty)$ and $\nu \in (0,1)$, there exists a constant $N = N(n, \nu, K_0)>0$ such that the following assertion holds. 
Suppose that \eqref{elliptic} holds in $C_{r, \mu}(Y)$ with $\mu$ defined in \eqref{mu-def} for some $r>0$ and $Y \in \mathbb{R}^{n+1}$. Suppose also that $\omega \in A_{1+\frac{1}{n}}$ and it satisfies 
\begin{equation}  \label{growth-1-assum}
 [\omega]_{A_{1+\frac{1}{n}}} \leq K_0 \quad \text{and} \quad [[\omega]]_{\textup{BMO}(B_r(y), \omega)} \leq\epsilon_0  \quad \text{for some} \quad \epsilon_0 \in(0,1). 
\end{equation} 
Then, for every $u \in C^{2,1}(C_{r, \mu}(Y)) \cap C(\overline{{C}_{r, \mu}(Y)})$ satisfying
\[ \mathcal{L} u \leq 0  \quad \text{in} \quad  C_{r, \mu}(Y),
\] 
and
\begin{equation} \label{growth-1-cond}
\mu\big(C_{r, \mu}(Y)\cap\{ u>0\}\big) \leq \delta_0 \mu(C_{r, \mu}(Y))
\end{equation}
for some $\delta_0\in(0,1)$, it holds that
\begin{equation} \label{growth-1-est}
\sup_{C_{r/2, \mu}(Y)} u^+ \leq \min\Big\{N\big(\delta_0^{\frac{1}{n+1}}+\epsilon_0\big), 1\Big\} \sup_{C_{r, \mu}(Y)} u^+.
\end{equation}
\end{proposition}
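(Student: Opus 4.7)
The plan is to apply the parabolic ABP estimate (Theorem \ref{ABP theorem}) to a carefully chosen auxiliary function. After translation, assume $Y=(0,0)$; let $M := \sup_{C_{r,\mu}} u^+$, which we may assume is positive (otherwise the statement is trivial). Choose a smooth cutoff $\Phi$ on $\overline{C_{r,\mu}}$ with $0 \leq \Phi \leq 1$, $\Phi \equiv 1$ on $\partial_p C_{r,\mu}$, and $\Phi \equiv 0$ on $\overline{C_{r/2,\mu}}$, and set $V := u - M\Phi$. Then $V \leq 0$ on $\partial_p C_{r,\mu}$, while $V = u$ on $\overline{C_{r/2,\mu}}$, so together with the trivial bound $u^+ \leq M$ the task reduces to showing
\[
\sup_{C_{r,\mu}} V^+ \leq N(n,\nu,K_0)\bigl(\delta_0^{1/(n+1)} + \epsilon_0\bigr)\,M.
\]

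To bound $\mathcal{L}V$, freeze the weight: set $\bar\omega := (\omega)_{B_r}$ and $\bar{\mathcal{L}} := \partial_t - \bar\omega\,a_{ij}D_{ij}$, so that $\mathcal{L}\Phi = \bar{\mathcal{L}}\Phi + (\bar\omega - \omega) a_{ij} D_{ij}\Phi$. The cylinder $C_{r,\mu}$ has time-height $r^2(\mu)_{B_r}^{1/n}$, which by \eqref{A-1-1/n} is comparable to $r^2/\bar\omega$---exactly the parabolic scale adapted to $\bar{\mathcal{L}}$. A standard tensor-product cutoff (built from a radial spatial bump on $B_r\setminus B_{r/2}$ and a time bump over the transition region, whose lengths are comparable to $r$ and $r^2(\mu)_{B_r}^{1/n}$ respectively by the doubling property \eqref{doubling property}) therefore satisfies $\|D^2\Phi\|_\infty \leq C/r^2$ and $\|\Phi_t\|_\infty \leq C/(r^2(\mu)_{B_r}^{1/n}) \leq C\bar\omega/r^2$. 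Using $\mathcal{L}u \leq 0$ and $|a_{ij}| \leq \nu^{-1}$,
\[
(\mathcal{L}V)^+ \leq M\,|\mathcal{L}\Phi| \leq C(n,\nu)\,M\,(\bar\omega + \omega)/r^2.
\]
Since $\Phi \geq 0$, $\{V > 0\} \subset \{u > 0\} \cap C_{r,\mu}$, whose $\mu$-measure is at most $\delta_0\,\mu(C_{r,\mu})$ by \eqref{growth-1-cond}. Theorem \ref{ABP theorem} then yields
\[
\sup V^+ \leq C(n,\nu)\,r^{n/(n+1)}\bigl\|(\mathcal{L}V)^+\bigr\|_{L^{n+1}(\{V>0\},\mu)}.
\]

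The core integral estimate exploits the decomposition $\omega = \bar\omega + (\omega - \bar\omega)$, which separates the two sources of smallness. The $\bar\omega^{n+1}$ piece contributes $\bar\omega^{n+1}\mu(\{V>0\}) \leq \bar\omega^{n+1}\delta_0\mu(C_{r,\mu})$, and using \eqref{A-1-1/n} to bound $\bar\omega^{n+1}\mu(C_{r,\mu}) \leq C(n,K_0) r^{n+2}$ gives a $\delta_0 r^{n+2}$ contribution. The $|\omega-\bar\omega|^{n+1}$ piece is controlled by Fubini and the BMO hypothesis in \eqref{growth-1-assum}:
\[
\int_{C_{r,\mu}} |\omega - \bar\omega|^{n+1}\mu\,dxdt = r^2(\mu)_{B_r}^{1/n}\int_{B_r}|\omega - \bar\omega|^{n+1}\omega^{-n}\,dx \leq \epsilon_0^{n+1}\,r^2(\mu)_{B_r}^{1/n}\,\omega(B_r),
\]
and a second application of \eqref{A-1-1/n} (in the form $(\omega)_{B_r}(\mu)_{B_r}^{1/n} \leq K_0$) yields $r^2(\mu)_{B_r}^{1/n}\omega(B_r) \leq C(n,K_0)r^{n+2}$. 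Combined, $\int_{\{V>0\}}(\bar\omega+\omega)^{n+1}\mu \leq C(n,K_0)[\delta_0 + \epsilon_0^{n+1}]r^{n+2}$; dividing by $r^{2(n+1)}$, taking the $(n+1)$-th root, multiplying by $Cr^{n/(n+1)}$, and using $(a+b^{n+1})^{1/(n+1)} \leq a^{1/(n+1)} + b$ delivers the target bound.

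The main obstacle is the last integral estimate: the hypothesis $\mu(\{u>0\}) \leq \delta_0\mu(C_{r,\mu})$ and the BMO hypothesis $[[\omega]]_{\textup{BMO}(B_r,\omega)} \leq \epsilon_0$ must each produce exactly their own term, $\delta_0^{1/(n+1)}$ and $\epsilon_0$, in the final bound. This separation is achieved by the decomposition $\omega = \bar\omega + (\omega - \bar\omega)$, but only because the cylinder's height $r^2(\mu)_{B_r}^{1/n}$ matches the frozen operator's parabolic scaling through \eqref{A-1-1/n}. The required scaling of the cutoff's time derivative, $\|\Phi_t\|_\infty \leq C\bar\omega/r^2$ rather than the naive $C/r^2$, is precisely what lets the BMO contribution close with the correct powers of $r$ and the $A_{1+1/n}$ constant.
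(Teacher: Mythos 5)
Your proof is correct and reaches the statement by a legitimate alternative route. The paper works with the explicit paraboloid barrier $v = u - \frac{M}{r^2}\big(|x|^2 - (\mu)_{B_r}^{-1/n}t\big)$, which vanishes on the parabolic boundary of $C_{r,\mu}$ after subtraction, to first establish a pointwise bound at the cylinder's top-center, $u(0) \leq N K_0\big(\delta_0^{1/(n+1)} + \epsilon_0\big) M$ (the claim \eqref{claim-1}); it then obtains the supremum bound over $C_{r/2,\mu}$ by translating this pointwise estimate to an arbitrary $X_0 \in C_{r/2,\mu}$, using Lemma \ref{cylinder-inclusion} together with the doubling property of $\mu$ (Lemma \ref{A-p doubling}) to transfer the measure hypothesis from $C_{r,\mu}$ to $C_{r/2,\mu}(X_0)$. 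You replace the paraboloid with a smooth tensor-product cutoff $\Phi$ vanishing on $\overline{C_{r/2,\mu}}$ and equal to $1$ on $\partial_p C_{r,\mu}$, and obtain the full supremum bound in a single ABP application with no propagation step. The core integral computation is the same in both arguments: freeze $\omega$ at $\bar\omega=(\omega)_{B_r}$, split $(\bar\omega+\omega)^{n+1}$ into a $\bar\omega^{n+1}$ piece (controlled by the density hypothesis, producing $\delta_0^{1/(n+1)}$) and a $|\omega-\bar\omega|^{n+1}$ piece (controlled by the BMO hypothesis, producing $\epsilon_0$), and use \eqref{A-1-1/n} twice to eliminate $r$. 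Two minor remarks. First, the time-gap between $\partial_p C_{r,\mu}$ and $\overline{C_{r/2,\mu}}$ that your cutoff needs (namely $r^2(\mu)_{B_r}^{1/n} - (r/2)^2(\mu)_{B_{r/2}}^{1/n} \geq \tfrac12 r^2(\mu)_{B_r}^{1/n}$) follows from the monotonicity $\mu(B_{r/2}) \leq \mu(B_r)$ alone, not from the doubling property \eqref{doubling property}, so your attribution is slightly off though the conclusion is right. Second, the paper's pointwise claim \eqref{claim-1} is not merely an internal step: it is reused in the proof of Corollary \ref{cor-1} and again in the proof of Theorem \ref{Harnack inequality}, so if your one-shot argument were to replace the paper's proof of this proposition, the pointwise version would still need to be stated and derived separately.
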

\begin{proof}  Our main idea is to freeze the weight $\omega$ in $\mathcal{L}$, and then apply the ABP estimates to a suitable barrier function.  By using the linear translation $X \mapsto X-Y$, we assume without loss of generality that $Y=0$. Due to this,  we write $C_{r,\mu}(Y)=C_{r,\mu}$ and $B_r(y)=B_r$. 

\smallskip
We begin by proving the claim that there exists $N = N(n, \nu)>0$ such that
\begin{equation} \label{claim-1}
u(0) \leq   \min\Big\{N K_0 \left(\delta_0^{1/(n+1)} + \epsilon_0 \right), 1\Big\}\sup_{C_{r, \mu}} u^+.
\end{equation}
To prove the claim, let us denote $M =\displaystyle{\sup_{C_{r, \mu}} u^+\geq 0}$. It suffices to verify the claim when
\[
M >0 \quad \text{and} \quad u(0) >0.
\]
Let
\[
v(x,t) = u(x,t) - \frac{M}{r^2}\Big(|x|^2 - (\mu)_{B_r}^{-1/n}t\Big) \quad \text{for} \quad (x,t) \in \overline{C_{r, \mu}},
\]
and
\[
\Omega^+=C_{r, \mu}\cap\{ v >0\} .
\]
As $v \leq u$ in $C_{r,\mu}$, we see that $\Omega^+ \subset C_{r, \mu}\cap\{ u>0\}$. This, together with \eqref{growth-1-cond}, implies that
\begin{equation} \label{Q-measure}
\mu(\Omega^+) \leq \mu \big(C_{r,\mu} \cap \{ u>0\}\big) \leq \delta_0 \mu(C_{r, \mu}).
\end{equation}
Now, by a direct calculation, using the assumption that $\mathcal{L} u\leq 0$ in $C_{r,\mu}$ and \eqref{elliptic}, we have
\begin{align}  \notag
\mathcal{L} v & = \mathcal{L} u + Mr^{-2} \big((\mu)_{B_r}^{-1/n} + 2 \omega(x) \text{tr}(a_{ij})\big)\\ \notag
& \leq Mr^{-2}\big((\mu)_{B_r}^{-1/n} + 2n \nu^{-1} \omega(x)\big)\\ \notag
& = Mr^{-2} \big((\mu)_{B_r}^{-1/n} + 2n \nu^{-1} (\omega)_{B_r}\big)  + 2n \nu^{-1}M r^{-2} \big(\omega(x) - (\omega)_{B_r}\big)\\ \label{Lv estimate}
&\leq N Mr^{-2}\big[(\mu)_{B_r}^{-1/n} + (\omega)_{B_r}  +  g(x)\big] \qquad \text{in} \quad  C_{r,\mu},
\end{align}
where $N = N(n,\nu)>0$, and 
\[ g(x) =\omega(x) - (\omega)_{B_r}.
\]
Also, due to \eqref{A-1-1/n}, we see that $(\mu)_{B_r}^{-1/n}  \leq (\omega)_{B_r}$. Therefore, it follows from \eqref{Lv estimate} that
\begin{equation*} \label{Lv estimate-2}
\mathcal{L} v  \leq N Mr^{-2}\big[(\omega)_{B_r}  +  g(x)\big] \qquad \text{in} \quad  C_{r,\mu}.
\end{equation*}
We also note that 
\[
v(x,t) \leq M -Mr^{-2}\Big(|x|^2 -(\mu)_{B_r}^{-1/n}t\Big)  \leq 0 \quad \text{for} \quad (x,t) \in \partial_p C_{r, \mu}.
\]
Then, applying the ABP estimates, Theorem \ref{ABP theorem}, to $v$ in $C_{r,\mu}$, we obtain
\begin{align} \notag
\sup_{C_{r,\mu}} v  & \leq   N r^{n/(n+1)}\| (\mathcal{L} v)^+\|_{L^{n+1}(\Omega^+, \mu)}  \\ \label{sup-v-05-13}
& \leq N r^{-(n+2)/(n+1)}M\Big[(\omega)_{B_r} \mu(\Omega^+)^{1/(n+1)} + \|g\|_{L^{n+1}(\Omega^+, \mu)} \Big], \end{align}
where $N = N(n, \nu)>0$.

\smallskip
Next, we control the two terms on the right hand side in \eqref{sup-v-05-13}. 
We note that by \eqref{cylinder measure formula}, \eqref{Q-measure}, and the fact that $[\omega]_{A_{1+\frac{1}{n}}}\leq K_0$, it follows that
\begin{align} \notag
r^{-(n+2)/(n+1)} (\omega)_{B_r} \mu(\Omega^+)^{1/(n+1)} & \leq r^{-(n+2)/(n+1)} (\omega)_{B_r} \mu(C_{r, \mu})^{1/(n+1)}  \delta_0^{\frac{1}{n+1}} \\ \notag
& = N(n) (\omega)_{B_r} (\mu)_{B_r}^{1/n}\delta_0^{1/(n+1)}\\ \label{part-1}
&\leq N(n)K_0\delta_0^{1/(n+1)}.
\end{align}
On the other hand, note that
\begin{align*}
 \| g\|_{L^{n+1}(\Omega^+, \mu)} & \leq \Big( \int_{C_{r,\mu}} |\omega(x) -(\omega)_{B_r}|^{n+1} \omega(x)^{-n}dx dt\Big)^{1/(n+1)} \\
& = r^{2/(n+1)} (\mu)_{B_r}^{1/(n^2+n)}\Big(\int_{B_{r}} |\omega(x) -(\omega)_{B_r}|^{n+1}\omega(x)^{-n}\, dx \Big)^{1/(n+1)} \\
& = r^{2/(n+1)} (\mu)_{B_r}^{1/(n^2+n)} \omega(B_r)^{1/(n+1)}[[\omega]]_{B_r, \omega},
\end{align*}
where $[[\omega]]_{B_r, \omega}$ used in the last step is  defined in \eqref{WBMO def}. 
From this, and the assumption \eqref{growth-1-assum}, it follows that
\begin{align} \notag
r^{-(n+2)/(n+1)} \| g\|_{L^{n+1}(\Omega^+, \mu)} & \leq   r^{-n/(n+1)}(\mu)_{B_r}^{1/(n^2+n)} \omega(B_r)^{1/(n+1)} \epsilon_0 \\ \notag
& = N(n)\big[ (\mu)_{B_r}^{1/n} (\omega)_{B_r}\big]^{1/(n+1)} \epsilon_0 \\ \label{part-2}
& \leq N(n)  K_0^{1/(n+1)}\epsilon_0.
\end{align}
Now, we put the estimates \eqref{part-1} and \eqref{part-2} into \eqref{sup-v-05-13} to obtain 
\begin{align*}
\sup_{C_{r,\mu}} v  \leq N  K_0\big(\delta_0^{1/(n+1)}  +\epsilon_0 \big)M,
\end{align*}
where $N = N(n, \nu)>0$, and we also used the fact that $K_0^{1/(n+1)} \leq K_0$ as $K_0 \geq 1$. As a consequence, we have
\[
u(0) = v(0) \leq  \min\Big\{N K_0 \big(\delta_0^{1/(n+1)} + \epsilon_0 \big), 1\Big\}M  
\]
for $N = N(n, \nu) >0$, and the claim \eqref{claim-1} is proved.  

\smallskip
We now proceed to prove \eqref{growth-1-est}. Let $X_0 =(x_0, t_0) \in C_{r/2, \mu}$ be arbitrary, it follows from Lemma \ref{cylinder-inclusion} that
\[
\{ u>0 \} \cap C_{r/2, \mu}(X_0) \subset \{ u>0 \} \cap C_{r, \mu}.
\]
Using this inclusion, along with \eqref{growth-1-cond} and \eqref{cylinder measure formula},  we obtain
\begin{align} \notag
\mu\left(\{ u>0 \} \cap C_{r/2, \mu}(X_0)\right) & \leq \mu \left(\{ u>0 \} \cap C_{r, \mu}\right) \leq \delta_0 \mu(C_{r, \mu}) \\  \label{part-3}
&=\delta_0 \sigma_n^{-1/n} r \mu(B_r)^{(n+1)/n}.
\end{align}
Noting that  $\omega \in A_{1+\frac{1}{n}}$, it follows from \eqref{A-p' weight} that $\mu \in A_{n+1}$ with $[\mu]_{A_{n+1}} = [\omega]_{A_{1+\frac{1}{n}}}^n\leq K_0^n$.  From \eqref{doubling property}, or by applying Lemma 
\ref{A-p doubling} to $\mu$ with $B= B_{r}$ and $A=B_{r/2}(x_0) \subset B$, we get 
\begin{equation*}
 \mu(B_r) \leq 2^{n(n+1)}K_0^n\mu(B_{r/2}(x_0)).
\end{equation*}
From this, \eqref{part-3}, and \eqref{cylinder measure formula}, it follows that
\begin{align*}
\mu(\{ u>0 \} \cap C_{r/2, \mu}(X_0))  & \leq 2^{(n+1)^2}K_0^{n+1}\delta_0 \sigma_n^{-1/n} r\mu(B_{r/2}(x_0))^{(n+1)/n}\\
& = 2^{(n+1)^2+1}K_0^{n+1}\delta_0 \mu(C_{r/2, \mu}(X_0)).
\end{align*}
Then, by taking a linear translation and applying the claim \eqref{claim-1} with $X_0$ in place of $0$,  $r/2$ in place of $r$ and $\min \Big\{2^{(n+1)^2+1} K_0^{n+1}\delta_0,1\Big\}$ in place of $\delta_0$, we get
\[
u(X_0)  \leq \min\Big\{N\big(\delta_0^{1/(n+1)} + \epsilon_0 \big), 1\Big\}\sup_{C_{r/2, \mu}(X_0)} u^+,
\]
where $N = N(n,\nu, K_0)>0$. 
Since $X_0$ is arbitrary in $C_{r/2, \mu}$, it follows from the last estimate and  Lemma \ref{cylinder-inclusion} that
\begin{align*}
\sup_{C_{r/2, \mu}} u^+\leq \min\Big\{N\big(\delta_0^{1/(n+1)} + \epsilon_0 \big), 1\Big\}\sup_{C_{r, \mu}} u^+ 
\end{align*}
for $N = N(n, \nu, K_0)>0$. Hence, \eqref{growth-1-est} is proved.
\end{proof}

As a result of Proposition~\ref{growth-1}, we have the following corollary on the upper bound estimates of the subsolutions to $\mathcal{L}u =0$.
\begin{corollary} \label{cor-1} Let $K_0 \in [1, \infty)$ and $\nu \in (0,1)$. There exists $\overline{\epsilon}_0 =\overline{\epsilon}_0(n, \nu, K_0) \in (0,1)$ such that the following assertion holds. Suppose that \eqref{elliptic} holds in $C_{r, \mu}(Y)$ with some $r>0$ and $Y =(y,s) \in \mathbb{R}^{n+1}$. Assume also that  $\omega \in A_{1+\frac{1}{n}}$ satisfies
\begin{equation} \label{assum-cor-grow-1}
[\omega]_{A_{1+\frac{1}{n}}} \leq K_0 \quad \text{and} \quad [[\omega]]_{\textup{BMO}(B_{r}(y), \omega)} \leq \overline{\epsilon}_0. 
\end{equation}
Then, for every $u \in C^{2,1}(C_{r, \mu}(Y)) \cap C(\overline{C_{r, \mu}(Y)})$ satisfying
\[
\mathcal{L} u \leq 0 \quad \text{in} \quad C_{r, \mu}(Y),
\]
it holds that
\begin{equation}\label{bdd esti}
u^+(Y)\leq N \left(\frac{1}{\mu(C_{r, \mu}(Y))} \int_{C_{r, \mu}(Y)} u^+(x,t)^q \mu(x)\, dxdt\right)^{1/q}
\end{equation}
for  $q \in (0, \infty)$, and $N = N(n, \nu, K_0, q)>0$.
\end{corollary}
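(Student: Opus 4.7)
The plan is to prove the stronger supremum estimate $\sup_{C_{r/2,\mu}(Y)} u^+ \leq N J$, where $J$ denotes the $L^q$-average appearing on the right of \eqref{bdd esti}; since $Y$ is the top vertex of $\overline{C_{r/2,\mu}(Y)}$ and $u$ is continuous there, this immediately yields \eqref{bdd esti}. The strategy combines Proposition~\ref{growth-1} with a Chebyshev truncation of the solution and a classical iteration lemma for nested weighted cylinders.

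The core step is to derive, for all $s, t$ with $r/2 \leq s < t \leq r$, the sup-comparison inequality
\[
\sup_{C_{s,\mu}(Y)} u^+ \leq \tau \sup_{C_{t,\mu}(Y)} u^+ + C \bigl(r/(t-s)\bigr)^{c/q} J,
\]
where $\tau = \min\{N(\delta_0^{1/(n+1)} + \epsilon_0),\, 1\}$ and $c, C$ depend only on $n, \nu, K_0, q$. To obtain this, I would fix $X \in C_{s,\mu}(Y)$ and set $\rho := t - s$. By Lemma~\ref{cylinder-inclusion} (with $\theta = \rho/t$), $C_{\rho,\mu}(X) \subset C_{t,\mu}(Y) \subset C_{r,\mu}(Y)$, so ellipticity holds on $C_{\rho,\mu}(X)$; the BMO smallness transfers from $B_r(y)$ to the sub-ball $B_\rho(x) \subset B_r(y)$ by the monotonicity of $[[\omega]]_{\textup{BMO}}$ under set inclusion. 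I then apply Proposition~\ref{growth-1} to the subsolution $u - L$ on $C_{\rho,\mu}(X)$, with level $L := \bigl(I/(\delta_0 \mu(C_{\rho,\mu}(X)))\bigr)^{1/q}$ (where $I := \int_{C_{r,\mu}(Y)} u^{+q} \mu\,dx\,dt$), so that Chebyshev's inequality enforces $\mu(\{u > L\} \cap C_{\rho,\mu}(X)) \leq \delta_0 \mu(C_{\rho,\mu}(X))$. This gives $u(X) \leq L + \tau \sup_{C_{t,\mu}(Y)} u^+$. To quantify $L$ uniformly in $X$, I invoke Lemma~\ref{A-p doubling} applied to $\mu$ with $B = B_r(y)$ and $A = B_\rho(x) \subset B$, which yields $\mu(B_\rho(x)) \geq K_0^{-n}(\rho/r)^{n(n+1)} \mu(B_r(y))$; combined with $\mu(C_{\rho,\mu}(X)) = \sigma_n^{-1/n}\rho\,\mu(B_\rho(x))^{(n+1)/n}$ and the definition of $J$, this produces the uniform bound $L \leq C_1 (r/\rho)^{c/q} J$ for $c := 1 + (n+1)^2$. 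Taking the supremum over $X \in C_{s,\mu}(Y)$ yields the comparison inequality.

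Finally, I would choose the auxiliary parameter $\delta_0 = \delta_0(n,\nu,K_0)$ and the threshold $\overline{\epsilon}_0 = \overline{\epsilon}_0(n, \nu, K_0)$ small enough that $\tau \leq 1/2 < 1$, and then invoke a classical iteration/absorption lemma (e.g.\ Giaquinta, \emph{Multiple Integrals in the Calculus of Variations}, Ch.~V, Lemma~5.1, or Han--Lin, Lemma~4.3): any non-decreasing bounded $f:[\rho,R]\to[0,\infty)$ with $f(s) \leq \theta f(t) + A(t-s)^{-\alpha}$ for all $\rho \leq s < t \leq R$ and some $\theta \in [0,1)$, $\alpha > 0$, satisfies $f(\rho) \leq C(\theta,\alpha) A(R-\rho)^{-\alpha}$. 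Applied with $f(s) = \sup_{C_{s,\mu}(Y)} u^+$ on $[r/2, r]$, $A = C r^{c/q} J$, and $\alpha = c/q$, this produces $\sup_{C_{r/2,\mu}(Y)} u^+ \leq N J$, finishing the proof. The main technical obstacle I anticipate is quantifying the blow-up of the Chebyshev level $L$ at small scales via the $A_{n+1}$ doubling constant, which controls how badly the weighted cylinder measure can shrink compared to the ambient one; the absorption lemma is exactly the device that converts this polynomial blow-up in $(r/(t-s))$ into a uniform bound by exploiting the smallness $\tau < 1$.
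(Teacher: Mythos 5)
Your proof is correct, but it follows a genuinely different route from the paper's. The paper reduces to $q=1$, then runs a Fabes--Stroock/Ferretti--Safonov style argument: it introduces the function $d(X)$ measuring the largest weighted cylinder $C_{\rho,\mu}(X)\subset C_{r,\mu}(Y)$, takes a maximizer $X_0$ of $d^{c_0}u$ (with the same exponent $c_0=(n+1)^2+1$ that you call $c$), and then invokes the contrapositive of the pointwise claim \eqref{claim-1} inside the proof of Proposition~\ref{growth-1} to show that $\{2u>u(X_0)\}$ occupies at least a $\delta_0$ fraction of $C_{r_0,\mu}(X_0)$; the $A_{n+1}$ doubling estimate \eqref{doubling-Cylin} then converts this into the integral bound. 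You instead keep $q$ general throughout and prove a sup-comparison inequality between nested cylinders $C_{s,\mu}(Y)\subset C_{t,\mu}(Y)$ by Chebyshev truncation at a level $L$ calibrated to the $L^q$ mass, applying Proposition~\ref{growth-1} as a black box (after verifying the transfer of the cylinder inclusion via Lemma~\ref{cylinder-inclusion}, the BMO smallness via monotonicity of the seminorm under domain inclusion, and the lower bound on $\mu(C_{\rho,\mu}(X))$ via Lemma~\ref{A-p doubling} --- all of which I have checked and which are correct), and then closes via the standard Giaquinta/Han--Lin absorption lemma. Interestingly, the paper also invokes that same iteration lemma, but only indirectly, to pass from $q=1$ to $q\in(0,1)$; your proof folds it into the core argument and thereby avoids the case split on $q$ entirely. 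Both routes lean on the same two ingredients (the ABP-based growth estimate and the $A_{n+1}$ doubling of $\mu$), so neither buys a sharper constant, but yours is more modular in that it does not reach into the proof of Proposition~\ref{growth-1} for the intermediate claim \eqref{claim-1}, whereas the paper's is closer to the template used for the Harnack inequality itself in Section~\ref{proof-section} and hence reuses machinery.

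One small point worth making explicit in a write-up: when you infer $u(X)\leq L + \tau\sup_{C_{t,\mu}(Y)}u^+$ from Proposition~\ref{growth-1}, you should note that $X$ lies on $\overline{C_{\rho/2,\mu}(X)}$ (as its top vertex) rather than in the open cylinder, so the pointwise bound follows from $\sup_{C_{\rho/2,\mu}(X)}(u-L)^+$ only after appealing to the continuity of $u$ up to the closure, which the hypothesis $u\in C(\overline{C_{r,\mu}(Y)})$ provides. Also, the case $I=0$ (i.e.\ $L=0$) should be dispatched separately, as Chebyshev degenerates there; but then $u\leq 0$ a.e.\ on $C_{r,\mu}(Y)$, hence everywhere by continuity, and \eqref{bdd esti} is trivial.
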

\begin{proof}  We observe that we only need to prove \eqref{bdd esti} when $q=1$ as the case $q>1$ follows from the case $q=1$ by applying H\"{o}lder's inequality, meanwhile the case $q \in (0,1)$ follows from the case $q=1$ and the standard iteration, see   \cite[p. 75-76]{Han-Lin} for example. From now on, we assume $q=1$.

\smallskip
We follow the idea introduced in \cite[Theorem 3.4]{FeSa}. As $\omega \in A_{1+\frac{1}{n}}$, we have $\mu \in A_{n+1}$ with $[\mu]_{A_{n+1}} = [\omega]_{A_{1+\frac{1}{n}}}^n \leq K_0^n$. Then, by Lemma~\ref{A-p doubling} with $B = B_r (y)$ 
and $A=B_\rho (x) \subset B$, we obtain 
\begin{equation*} 
\mu(B_r (y))  \leq  \Big(\frac{r}{\rho}\Big)^{n(n+1)} K_0^n \mu(B_{\rho}(x))   
\end{equation*}
for all $r \geq \rho>0$ such that $B_\rho(x) \subset B_r(y)$. 
As a result, for each $X =(x, t) \in C_{r,\mu}(Y)$ and $\rho>0$ such that $B_{\rho }(x) \subset B_{r}(y)$, it follows from formula \eqref{cylinder measure formula} that
\begin{align} \notag 
\mu(C_{\rho, \mu}(X)) & = \sigma_n^{-1/n} \rho\mu(B_\rho(x))^{(n+1)/n} \\ \notag
& \geq \sigma_n^{-1/n} \rho K_0^{-(n+1)}  \left(\frac{\rho}{r}\right)^{(n+1)^2}  \mu(B_r(y))^{(n+1)/n} \\ \notag
& =K_0^{-(n+1)}  \left(\frac{\rho}{r}\right)^{(n+1)^2 +1} \sigma_n^{-1/n}r \mu(B_r(y))^{(n+1)/n} \\ \label{doubling-Cylin}
& = K_0^{-(n+1)}  \left(\frac{\rho}{r}\right)^{c_0} \mu(C_{r, \mu}(Y)),
\end{align}
where $c_0 = (n+1)^2 + 1 >0$ and $\sigma_n=\sigma_n(n)>0$.

\smallskip
Next, let us define
\begin{align*}
d(X) =
\begin{cases} 
\sup \left\{ \rho > 0: C_{\rho, \mu}(X) \subset C_{r, \mu}(Y) \right\} & \text{for } X\in C_{r, \mu}(Y)\cup \big(B_r(y)\times \{s\}\big), \\
0 & \text{for } X\in \partial_p C_{r, \mu}(Y)\cup \big(\partial B_{r}(y)\times \{s\}\big).
\end{cases}
\end{align*}
Also, let
\[
 M =  \sup_{X \in C_{r, \mu}(Y)} d(X)^{c_0} u(X).
\]
Note that we only need to consider the case $M>0$, as the assertion of the corollary is trivial if $M=0$. From the definition of $d$ above, it is not hard to see $d^{c_0} u \in C(\overline{C_{r, \mu}(Y)})$, and that there exists $X_0 = (x_0, t_0) \in C_{r,\mu}(Y)\cup \big(B_r(y)\times \{s\}\big)$ such that
\[
M = (2 r_0)^{c_0}  u(X_0) >0,\quad \text{where} \quad r_0 = \frac{1}{2} d(X_0)>0.
\]
It follows from the definition of $d(X)$ and Lemma \ref{cylinder-inclusion} that  for any $X \in C_{r_0, \mu}(X_0)$, 
\begin{align} \label{double inclusion}
C_{r_0, \mu}(X) \subset C_{2r_0, \mu}(X_0) \subset C_{r, \mu}(Y).
\end{align}
This implies that $d(X) \geq r_0$ for all $X\in C_{r_0,\mu}(X_0)$ and then
\[
2^{c_0}   u(X_0) = r_0^{-c_0} M \geq r_0^{-c_0} \sup_{C_{r_0, \mu}(X_0)} d^{c_0} u \geq \sup_{C_{r_0, \mu}(X_0)} u.
\]
From this, and by setting
\[
v(X) =  u(X) - \frac{1}{2} u(X_0)  \quad \text{for}\quad X \in C_{r, \mu}(Y),
\]
we infer that 
\begin{align} \label{growth-1-con}
v(X_0) = 2^{-1}  u(X_0) \geq   2^{-c_0-1} \sup_{C_{r_0, \mu}(X_0)} u 
>   2^{-c_0-1} \sup_{C_{r_0, \mu}(X_0)} v.
\end{align}

\smallskip
Let $\overline{\epsilon}_0=\overline{\epsilon}_0(n,\nu, K_0)>0$ and $\delta_0=\delta_0(n, \nu, K_0)>0$ be sufficiently small so that 
\begin{equation} \label{delta-1-def}
NK_0\overline{\epsilon}_0\leq 2^{-c_0-2} \quad \text{and}\quad N K_0\delta_0^{\frac{1}{n+1}} \leq 2^{-c_0-2},
\end{equation}
where $N = N(n,\nu)>0$ is the constant defined in the claim \eqref{claim-1} in the proof of Proposition \ref{growth-1}. 

\smallskip
Now,  we prove \eqref{bdd esti} with this choice of $\overline{\epsilon}_0$ in \eqref{delta-1-def} and the assumption \eqref{assum-cor-grow-1}.  From the choices of $\delta_0$ and $\overline{\epsilon}_0$,  \eqref{assum-cor-grow-1}, the claim \eqref{claim-1}, and \eqref{growth-1-con}, we infer that
\begin{equation} \label{Q-0-below}
\mu(\Omega) > \delta_0 \mu(C_{r_0, \mu}(X_0)), \quad \text{where} \quad \Omega = \{v>0\} \cap C_{r_0, \mu}(X_0).
\end{equation}
As $d(Y) =r$, it follows that
\[
u(Y) = r^{-c_0}u (Y) d(Y)^{c_0} \leq r^{-c_0}M.
\]
Moreover, from the definition of $\Omega$, we see that
\[ 
\Omega= \big\{ X: 2u (X) >  u(X_0) \big\} \cap C_{r_0, \mu}(X_0).
\] 
Therefore,
\begin{align*}
u^{+}(Y) & \leq r^{-c_0}M  = r^{-c_0} (2 r_0)^{c_0} u(X_0) \\
& = \left(\frac{2r_0}{r}\right)^{c_0 } \frac{1}{\mu(\Omega)} \int_{\Omega} u(X_0)\mu(x)\, dxdt \\
& \leq \left(\frac{2r_0}{r}\right)^{c_0} \frac{2}{\mu(\Omega)} \int_{\Omega} u^+(X)\mu(x)\, dxdt.
\end{align*}
By \eqref{doubling-Cylin}, \eqref{double inclusion}, and \eqref{Q-0-below}, 
it follows that 
\begin{equation*}
\begin{aligned}
u^{+}(Y) & \leq   \frac{2^{c_0+1}\delta_0^{-1}}{\left(\frac{r}{r_0}\right)^{c_0} \mu(C_{r_0, \mu}(X_0))} \int_{C_{r_0, \mu}(X_0)} u^+(X)\mu(x) \, dxdt \\
 &\leq \frac{K_0^{n+1} 2^{c_0+1}\delta_0^{-1}}{\mu(C_{r, \mu}(Y))} \int_{C_{r, \mu}(Y)} u^+(X)\mu(x) \, dxdt \\
 & \leq \frac{N}{\mu(C_{r, \mu}(Y))} \int_{C_{r, \mu}(Y)} u^+(X)\mu(x) \,  dxdt,
\end{aligned}
\end{equation*}
where $N =N(n,\nu, K_0)=K_0^{n+1} 2^{c_0+1}\delta_0^{-1}>0$ with $c_0=(n+1)^2+1$ and $\delta_0$ defined in \eqref{delta-1-def}. Hence,  \eqref{bdd esti} for $q=1$ is proved and the proof is completed.
\end{proof}
\subsection{Second growth lemma}

The main results of this subsection are Proposition \ref{second-growth-lemma} and Corollary \ref{prop-up-lemma} below. To prove these results, we need a growth lemma in slant cylinders (Lemma \ref{slant-cylin-lemma} below).  For this purpose, let us introduce several notations. For $r>0$, and  $Y = (y,s) \in \mathbb{R}^{n+1}$ with $s>0$, the parabolic slant cylinder  $V_r(Y)$ is defined as
\begin{equation}\label{def-slant}
V_r(Y) =\Big\{(x, t) \in \mathbb{R}^{n+1}: \big| x- \frac{t}{s} y\big| < r, \ 0 < t< s \Big\}.
\end{equation}
We assume that there is $K \geq 1$ such that
\begin{equation}  \label{slant-cyl}
K^{-1}  r (\mu)_{B_R(y_0)}^{1/n} |y| \leq s \leq K r^2 (\mu)_{B_R(y_0)}^{1/n},
\end{equation}
where $y_0 \in \mathbb{R}^n$ and some $R \in [r, 2 K^2 r]$ such that
\begin{equation} \label{Vr-in}
x \in B_R(y_0) \text{ whenever there is some }  t \text{ such that } (x, t) \in V_r(Y).
\end{equation}
Observe that we can simply take $y_0= y/2$ and $R= (K^2/2 +1)r$, then $R \in [r, 2K^2 r]$ as $K \geq 1$, and \eqref{Vr-in} follows.  In other words,  \eqref{Vr-in} is not a condition, but an introduction to $y_0$ and $R$.

\smallskip
For a given slant cylinder $V_r(Y)$, we also write its parabolic boundary as
\begin{equation*} 
\partial_p V_r(Y) = \big(\overline{B_r} \times \{0\}\big) \cup S,
\end{equation*}
where $S = \big\{(x, t): |x-(t/s)y|=r, \ 0 < t < s\big\}$. We now introduce the following growth lemma on slant cylinders, which is the key result in the subsection.
\begin{lemma}[Growth Lemma on Slant Cylinder] \label{slant-cylin-lemma} For $\nu\in(0,1)$, $K_0\in[1, \infty)$, and $K\in[1,\infty)$, there exist $\epsilon_1 =\epsilon_1 (n, \nu, K_0, K) \in (0,1)$  and $\beta_1 =\beta_1(n, \nu, K_0, K) \in (0,1)$ such that the following statement holds. Assume that \eqref{elliptic} holds in $V_r(Y)$, and  \eqref{slant-cyl} holds with some $r >0$ and $y_0, R$ as in \eqref{Vr-in}. Also assume that $\omega \in A_{1+\frac{1}{n}}$ and it satisfies 
\[ [\omega]_{A_{1+\frac{1}{n}}}\leq K_0 \quad \text{and} \quad  
 [[\omega]]_{B_R(y_0), \omega} \leq \epsilon_1.
 \] 
Then, for every function $u \in C^{2,1}(V_r(Y)) \cap C(\overline{V_r(Y)})$ satisfying
\[
\mathcal{L} u \leq 0 \quad \text{in} \quad V_r(Y) \quad \text{and} \quad   u (\cdot, 0) \leq 0 \quad \text{on} \quad B_r,
\]
it holds that
\[
 u(\cdot, s) \leq \beta_1 \sup_{\partial_p V_r(Y)}u^+ \quad \text{on} \quad B_{r/2}(y).
\]
\end{lemma}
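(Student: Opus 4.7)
The plan is to adapt the barrier-plus-ABP strategy used in the proof of Proposition~\ref{growth-1}, freezing the weight $\omega$ at its average and applying Theorem~\ref{ABP theorem} to a suitable supersolution adapted to the slant geometry. Normalize $M := \sup_{\partial_p V_r(Y)} u^+$ and assume $M > 0$ (otherwise the conclusion is trivial). Write $\omega_0 := (\omega)_{B_R(y_0)}$ and $\lambda := (\mu)_{B_R(y_0)}^{1/n}$, so that by \eqref{A-1-1/n} we have $\omega_0 \lambda \leq K_0$, while \eqref{slant-cyl} together with $R \leq 2 K^2 r$ yields $s \leq K r^2 \lambda$ and $|y| \leq K^2 r$. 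The slant coordinate $\xi := x - (t/s) y$ identifies $V_r(Y)$ linearly with the straight cylinder $B_r \times (0, s)$, making the geometry transparent.

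The heart of the proof is to construct a smooth function $\Phi$ on $\overline{V_r(Y)}$ satisfying: (i) $\Phi \geq M$ on the lateral face of $\partial_p V_r(Y)$ and $\Phi \geq 0$ on the bottom $\overline{B_r}\times\{0\}$; (ii) $\Phi(x,s) \leq (1 - c_0) M$ for all $x\in B_{r/2}(y)$, with $c_0 = c_0(n, \nu, K_0, K) \in (0, 1)$; and (iii) $\mathcal L_0 \Phi \geq 0$ in $V_r(Y)$, where $\mathcal L_0 := \partial_t - \omega_0 a_{ij} D_{ij}$ denotes the frozen operator. A natural ansatz is the slant quadratic $\Phi(x, t) = M\bigl[\alpha + \beta|\xi|^2/r^2 + \gamma t/s\bigr]$: the supersolution condition for $\mathcal L_0$ reduces, via the structural bounds on $|y|$, $s$ and $\omega_0 \lambda$, to $\gamma \geq C(n,\nu,K_0,K)\, \beta$, and balancing this against (i)--(ii) succeeds when the constant $C$ is of order one. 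When $K$ or $K_0$ is large so that the naive ansatz fails, I would chain the construction: decompose $V_r(Y)$ into $N = N(n,\nu,K_0,K)$ sub-slabs in time, apply the quadratic barrier (or Proposition~\ref{growth-1}) on each sub-slab to get a multiplicative gap $\beta_\ast < 1$, and compose to obtain the global gap $1 - c_0 := \beta_\ast^N \in (0,1)$.

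Once $\Phi$ is in hand, the weight-freezing error is handled exactly as in the proof of Proposition~\ref{growth-1}. Writing $\mathcal L(u - \Phi) = \mathcal L u - \mathcal L \Phi \leq (\omega - \omega_0) a_{ij} D_{ij} \Phi =: F$ in $V_r(Y)$ and noting that $\|D^2 \Phi\|_{L^\infty} \lesssim M/r^2$ by the explicit form, Definition~\ref{def-mean-os} together with the doubling of $\mu$ (Lemma~\ref{A-p doubling}) yields
\[
r^{n/(n+1)} \|F^+\|_{L^{n+1}(V_r(Y), \mu)} \leq N(n, \nu, K_0, K)\, M\, [[\omega]]_{B_R(y_0), \omega} \leq N\, M\, \epsilon_1.
\]
Applying Theorem~\ref{ABP theorem} to the subsolution $u - \Phi$ in $V_r(Y)$, which has boundary values $\leq 0$ on $\partial_p V_r(Y)$ by (i), gives $\sup_{V_r(Y)} (u - \Phi)^+ \leq N M \epsilon_1$. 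Choosing $\epsilon_1 = \epsilon_1(n, \nu, K_0, K)$ so small that $N \epsilon_1 \leq c_0/2$ and invoking (ii) at $t = s$ produces $u(\cdot, s) \leq (1 - c_0/2) M$ on $B_{r/2}(y)$, proving the lemma with $\beta_1 := 1 - c_0/2$.

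The main obstacle is the quantitative gap (ii): a single polynomial barrier suffices only when the slant parameter $K$ is of order one, and for general $K$ one must iterate along a chain of straight weighted sub-cylinders centred on the slant axis $\{(\alpha y, \alpha s) : \alpha \in [0, 1]\}$. The geometric compatibility of consecutive sub-cylinders in this chain, and the preservation of the BMO smallness on each, depend on the drift-size bound $|y|/s \leq K/(r\lambda)$ from \eqref{slant-cyl} together with the doubling property of the $A_{n+1}$ weight $\mu$; verifying these compatibilities quantitatively is the delicate part of the argument.
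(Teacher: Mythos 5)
Your overall framework matches the paper's: freeze the weight at its average $\omega_0=(\omega)_{B_R(y_0)}$, construct a barrier for the frozen operator, apply the ABP estimate to $u - \text{barrier}$, and control the freezing error via the weighted BMO seminorm. The final step of your argument (the $\|F^+\|_{L^{n+1}(V_r(Y),\mu)}$ estimate and the choice of $\epsilon_1$) is essentially the paper's. The gap is in the construction of the barrier.

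Your quadratic ansatz $\Phi = M\bigl[\alpha + \beta |\xi|^2/r^2 + \gamma t/s\bigr]$ with $\xi = x - (t/s)y$ cannot satisfy (i)--(iii) in the parameter regime of the lemma. Indeed, $D_{ij}\Phi = 2M\beta r^{-2}\delta_{ij}$ and $\Phi_t = Ms^{-1}\gamma - 2M\beta r^{-2}s^{-1}\xi\cdot y$, so $\mathcal L_0\Phi\ge 0$ everywhere in $V_r(Y)$ forces
\[
\gamma \;\geq\; \sup_{\xi\in B_r}\Bigl[\tfrac{2\beta}{r^2}\,\xi\cdot y\Bigr] \;+\; \tfrac{2\beta s}{r^2}\,\omega_0\,\mathrm{tr}(a_{ij}) \;\geq\; \tfrac{2\beta |y|}{r}.
\]
Under \eqref{slant-cyl} the slant $|y|/r$ can be as large as $K^2\ge 1$, so $\gamma\ge 2\beta$. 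On the other hand, (i) forces $\alpha+\beta\ge 1$ with $\alpha\ge 0$, and then (ii) requires $\alpha + \beta/4 + \gamma \le 1 - c_0 < 1$, hence $\gamma < 3\beta/4$. These two conditions are incompatible for any $\beta>0$, so there is no quantitative gap at $t=s$. Your own diagnosis — that the single polynomial barrier ``suffices only when $K$ is of order one'' — is therefore too optimistic: because the lemma's hypotheses impose $K\ge 1$ (and $K_0\ge 1$, $\mathrm{tr}(a_{ij})\ge n\nu$), the quadratic ansatz never closes. The fallback iteration in time sub-slabs is not a safe escape either: Proposition~\ref{growth-1} requires a $\mu$-density hypothesis that is not available here, while the Second Growth Lemma and the Prop-up Lemma are derived downstream from the very lemma you are proving, so invoking either one would be circular.

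The paper resolves this with a different barrier: $v(x,t)=e^{-\lambda t}\,w(x,t)^2$ with $w = r^2 - |x - (t/s)y|^2$ and $\lambda \sim K^2(\omega)_{B_R(y_0)}/r^2$. Two features of $w^2$ are essential and are absent from your $\Phi$. First, degree four in the slant coordinate gives the Hessian an $a_{ij}D_iw\,D_jw$ contribution with $|Dw|^2 = 4(r^2-w)$, whose constant part $-8\nu\omega_0 r^2$ is uniformly negative in $V_r(Y)$; this is what absorbs the zeroth-order positive terms. Second, the exponential prefactor supplies the $-\lambda w^2$ term that, via Cauchy--Schwarz, absorbs the $w$-linear drift and trace terms without any constraint that conflicts with the boundary data — and because $\lambda s \lesssim K^3K_0$ is bounded by \eqref{slant-cyl} and \eqref{A-1-1/n}, the factor $e^{-\lambda s}$ is bounded below, giving a uniform gap $\beta_1 = 1 - \tfrac{9}{32}e^{-N_1K^3K_0}$ in one shot, with no iteration. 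To repair your proof, replace $\Phi$ by a barrier of this form.
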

\begin{proof}  As in the proof of Proposition \ref{growth-1}, the main idea is to freeze $\omega$ in $\mathcal{L}$, and then apply the ABP estimates (Theorem \ref{ABP theorem}) to a suitable barrier function.  Note that by Corollary~\ref{maximum principle}, we only need to prove the assertion of the lemma  when
\begin{equation} \label{u-normalized-1}
M = \sup_{\partial_p V_r(Y)} u >0.
\end{equation}
Let us denote
\[
w(x, t) = r^2 - |x - t \ell|^2, \quad \text{where} \quad \ell = \frac{1}{s} y,
\]
and let
\[ 
v(x, t) = e^{-\lambda t} w^2(x, t) \quad\text{for}\quad (x, t) \in \overline{V_r(Y)},
\]
where $\lambda>0$ is to be determined in \eqref{gamma-def} below. We note that
\[
D_iw = -2(x_i-t\ell_i) \quad \text{and} \quad D_{ij} w = -2\delta_{ij}.
\]
Moreover,
\[
v_t(x, t) = \left\{-\lambda w^2 +  4 \ell \cdot (x-t\ell) w \right\}e^{-\lambda t},
\]
and
\[
D_i v  =2 e^{-\lambda t} w D_i w, \quad D_{ij} v =2 e^{-\lambda t}\big(D_i w D_j w + w D_{ij}w \big).
\]
Therefore,
\begin{align*} 
\mathcal {L}v&=v_t - \omega(x) a_{ij} D_{ij}v\\  
&= e^{-\lambda t}\big[-\lambda w^2 +  4 \ell \cdot (x-t\ell) w  - 2\omega \big(a_{ij}D_iw D_j w + w a_{ij}D_{ij}w \big)\big] \\ 
& \leq  e^{-\lambda t}\big[-\lambda w^2 +  4 \ell \cdot (x-t\ell) w -2 \omega \nu|Dw|^2 + 4\omega \text{tr}(a_{ij})w\big],
\end{align*}
where in the last step we used \eqref{elliptic}, and $\text{tr}(\cdot)$ denotes the trace operator acting on matrices. As $|Dw|^2 = 4|x-t\ell|^2 = 4(r^2-w)$, we have
\begin{align} \notag
 \mathcal {L}v & \leq e^{-\lambda t}\left\{-\lambda w^2 +  \big[ 4 \ell \cdot (x-t\ell) + 4\omega \text{tr}(a_{ij}) + 8\omega \nu \big]w -8 \nu  r^2   \omega \right\} \\ \notag
 & = e^{-\lambda t}\big\{-\lambda w^2 +  \big[ 4 \ell \cdot (x-t\ell) + 4(\omega)_{B_R(y_0)}  \text{tr}(a_{ij}) + 8(\omega)_{B_R(y_0)} \nu \big]w \\ \label{perturb-1-0921}
& \qquad -8 \nu  r^2  (\omega)_{B_R(y_0)}\big\}  + g(x, t),
\end{align}
where $R>0$ and $y_0$ are defined in \eqref{Vr-in} and
\begin{align*}
g(x,t) & = e^{-\lambda t}\big\{ \big[ 4\big(\omega - (\omega)_{B_R(y_0)}  \big) \text{tr}(a_{ij}) + 8\big( \omega - (\omega)_{B_R(y_0)} \big) \nu \big] w \\
& \qquad  -8 \nu  r^2\big(\omega-  (\omega)_{B_R(y_0)}\big) \big\}.
\end{align*}
Due to \eqref{elliptic} and the fact that $|w| \leq r^2$, it follows that there is $N = N(n, \nu)>0$ such that
\begin{equation} \label{est-g-0508}
|g(x,t)| = N r^2e^{-\lambda t}\big| \omega(x) - (\omega)_{B_R(y_0)} \big| \quad \text{on} \quad V_r(Y).
\end{equation}
Also, by \eqref{slant-cyl} and \eqref{A-1-1/n}, we have
\[
| \ell \cdot (x-t\ell)| \leq |\ell| |x-t \ell| \leq K (\mu)_{B_R(y_0)}^{-1/n}  \leq K  (\omega)_{B_R(y_0)}.
\]
From this, \eqref{elliptic}, and  $K\geq 1$, there exists $N_0 = N_0(n, \nu)>0$ such that
\[
\big|  4 \ell \cdot (x-t\ell) + 4(\omega)_{B_R(y_0)}  \text{tr}(a_{ij}) + 8(\omega)_{B_R(y_0)} \nu \big| \leq N_0K (\omega)_{B_R(y_0)}.
\]
Therefore, we can infer from \eqref{perturb-1-0921} that
\begin{align*}
 \mathcal{L}v\leq e^{-\lambda t}\big[-\lambda w^2 +   N_0K(\omega)_{B_R(y_0)} w  - 8\nu r^2  (\omega)_{B_R(y_0)}\big] + g(x, t) \quad \text{in} \quad V_r(Y).
\end{align*}
By the Cauchy-Schwarz inequality, we have
\[
N_0K(\omega)_{B_R(y_0)} w\leq \frac{N_0^2K^2(\omega)_{B_R(y_0)}w^2}{32\nu r^2}+8\nu r^2(\omega)_{B_R(y_0)}.
\]
Then, taking 
\begin{equation} \label{gamma-def}
\lambda =  \frac{N_1K^2 (\omega)_{B_R(y_0)}}{r^2}>0 \quad \text{with}\quad N_1(n,\nu)=\frac{N_0^2}{16\nu},
\end{equation}
we see that
\[
-\lambda w^2 +  N_0 K (\omega)_{B_R(y_0)} w  - 8\nu r^2  (\omega)_{B_R(y_0)} \leq 0 \quad \text{in}\quad V_r(Y).
\]
Because of this, it follows that
\[ 
\mathcal{L} v = v_t - \omega(x) a_{ij} D_{ij}v \leq  g(x, t) \quad \text{in}\quad V_r(Y). \]
Next, we define
\[
\phi(x, t) = u(x, t) -M + \frac{M}{r^4} v(x,t) \quad \text{in}\quad \overline{V_{r}(Y)}.
\]
It then follows from the assumption on $u$ that
\[
\mathcal{L} \phi = \phi_t - \omega(x) a_{ij} D_{ij}\phi \leq \frac{M}{r^4}g \quad \text{in} \quad V_r(Y).
\]
We also note that
\begin{equation} \label{phi-para-boun}
\phi \leq 0 \quad \text{on} \quad \partial_p V_r(Y) = \big(\overline{B_r} \times \{0\}\big) \cup S,
\end{equation}
where $S = \{(x,t): |x-t\ell|=r, \ 0 < t < s\}$ in which $\ell=y/s$. Precisely, for all $x \in \overline{B_r}$, due to the assumption that $u(x,0) \leq 0$, we see that
\[
\phi(x,0) = u(x,0) - M + \frac{M}{r^4}v(x,0)  \leq M \Big[ \frac{(r^2 -|x|^2)^2}{r^4}  - 1\Big] \leq 0.
 \]
On the other hand, for all $(x, t) \in S$, as $v(x, t) =0$, it follows by the definition of $M$ in \eqref{u-normalized-1} that
\[
\phi (x, t) =u(x, t) - M \leq 0.
\]
Hence \eqref{phi-para-boun} is verified. 

\smallskip
Now, observe that from \eqref{Vr-in}, and the definition of $V_r(Y)$, 
it follows that
\begin{equation} \label{rR-BV}
 V_r(Y) \subset B_{R}(y_0) \times (0, s).
\end{equation}
Using \eqref{phi-para-boun} and \eqref{rR-BV}, we can apply the ABP estimates (see Theorem \ref{ABP theorem}) for $\phi$ in $V_r(Y)$ to obtain
\[ 
\sup_{V_r(Y)} \phi^+  \leq N M r^{-4}  R^{\frac{n}{n+1}} \|g\|_{L^{n+1} (V_r(Y), \mu)} 
\]
for $N = N(n, \nu)>0$. Then, using \eqref{Vr-in},  \eqref{est-g-0508}, and \eqref{gamma-def}, we obtain
\begin{align*}
& \sup_{V_r(Y)} \phi^+ \\
& \leq N K^4M  R^{\frac{n}{n+1}-2} \Big(\int_{V_r(Y)} e^{-\lambda (n+1)t} \big|(\omega)_{B_R(y_0)} - \omega(x) \big|^{n+1} \omega^{-n}(x) dxdt \Big)^{1/(n+1)} \\
& \leq N K^4M R^{\frac{n}{n+1} -2} \Big(\frac{1}{\lambda}\int_{B_R(y_0)} \big|(\omega)_{B_R(y_0)} - \omega(x) \big|^{n+1} \omega^{-n}(x) dx \Big)^{1/(n+1)} \\
& \leq N_2 K^{4-\frac{2}{n+1}} M   \Big(\frac{1}{\omega(B_R(y_0))}\int_{B_R(y_0)} \big|(\omega)_{B_R(y_0)} - \omega(x) \big|^{n+1} \omega^{-n}(x) dx \Big)^{1/(n+1)} \\
& = N_2K^{4-\frac{2}{n+1}} M [[\omega]]_{B_R(y_0), \omega},
\end{align*}
where $N_2=N_2(n,\nu)>0$. From the definition of $v$, we note that
\[
r^{-4} v(x,s) \geq \frac{9}{16} e^{-\lambda s} \quad \text{for all}\quad x \in B_{r/2}(y).
\]
Also, it follows from the choice of $\lambda$ in  \eqref{gamma-def}, and \eqref{slant-cyl}  that
\[
\begin{split}
\lambda s & = r^{-2} N_1K^2 (\omega)_{B_R(y_0)} s \leq N_1K^3 (\omega)_{B_R(y_0)} (\mu)^{1/n}_{B_{R}(y_0)} \leq N_1K^3  [\omega]_{A_{1+\frac{1}{n}}} \\
& \leq N_1(n,\nu)K^3K_0.
\end{split}
\]
Therefore, combining the last three estimates,  and using the assumption $[[\omega]]_{B_R(y_0), \omega} \leq \epsilon_1$, 
we see that for all $x \in B_{r/2}(y)$,
\begin{align*}
u(x, s)  & \leq  \Big[ 1 - r^{-4}v(x, s) + N_2K^{4-\frac{2}{n+1}} \epsilon_1\Big]M  \\ 
&  \leq \Big[ 1 - \frac{9}{16}e^{-N_1K^3 K_0} + N_2K^{4-\frac{2}{n+1}}\epsilon_1\Big]M.
\end{align*}
Finally, by taking
\begin{equation}\label{def-beta_2}
\begin{split}
\epsilon_1 & =\epsilon_1(n,\nu, K_0, K) = \frac{9}{32N_2K^{4-\frac{2}{n+1}}}e^{-N_1K^3 K_0}\quad \text{and}\ \\
\beta_1 & =\beta_1(n,\nu,K_0, K)=1-\frac{9}{32}e^{-N_1 K^3 K_0},
\end{split}
\end{equation}
where $N_1=N_1(n,\nu)>0$ and $N_2=N_2(n,\nu)>0$, we see that $\beta_1 \in (0,1)$ and
\[
u(\cdot, s)  \leq \beta_1 M \quad \text{on} \quad B_{r/2}(y).
\]
The proof of the lemma is completed.
\end{proof}

\smallskip
Now, for $Y=(y, s)\in \R^{n+1}$, $r>0$, and $h\in(0,1]$, we denote the following type of weighted parabolic cylinders with height $hr^2(\mu)_{B_r(y)}^{1/n}$ by
\[C^{h}_{r,\mu}(Y)=B_r(y)\times\big(s-hr^2(\mu)_{B_r(y)}^{1/n}, s\big).
\]
The following result, which is known as the second growth lemma, is the first main result of this subsection.
\begin{proposition}[Second Growth Lemma] \label{second-growth-lemma} For $\nu\in(0,1)$,  $r>0$, $\rho >0$,  $h\in(0,1]$, and $K_0\in[1,\infty)$, there exist $\epsilon_2 =\epsilon_2 (n, \nu, K_0, r/\rho, h) \in (0,1)$ sufficiently small and 
$\beta_2= \beta_2(n, \nu, K_0, r/\rho, h)\in (0,1)$ such that the following assertion holds.  Assume that \eqref{elliptic} holds in $C_{r,\mu}^h(Y)$, and that $\omega \in A_{1+\frac{1}{n}}$ satisfies
 \[ [\omega]_{A_{1+\frac{1}{n}}}\leq K_0 \quad \text{and} \quad  [[\omega]]_{\textup{BMO}(B_{2r}(y), \omega)} \leq \epsilon_2. 
 \]
Then, for every $u \in C^{2,1}(C^h_{r, \mu}(Y)) \cap C(\overline{C^h_{r,\mu}(Y)})$ satisfying  $\mathcal{L}u \leq 0$ in $C^h_{r,\mu}(Y)$ and
\[
u(\cdot, \tau) \leq 0 \quad \text{in} \quad   B_\rho(z),
\]
with some $z \in \mathbb{R}^n$, $\rho \in (0, r]$, and $\tau$ satisfying
\begin{equation}\label{tau-condition}
B_\rho(z) \subset B_r(y) \quad \text{and} \quad s -hr^2(\mu)_{B_r(y)}^{1/n} \leq \tau \leq s - h\Big(\frac{ r^2}{2} + \rho^2\Big)(\mu)_{B_r(y)}^{1/n},
\end{equation}
it holds that
\begin{equation} \label{second-lemma-conclusion}
\sup_{C^h_{r/2, \mu}(Y)} u^+ \leq \beta_2 \sup_{C^h_{r, \mu}(Y)} u^+.
\end{equation}
\end{proposition}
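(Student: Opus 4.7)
The plan is to deduce Proposition~\ref{second-growth-lemma} from the slant cylinder growth lemma (Lemma~\ref{slant-cylin-lemma}) by, for each target point $(x_1,t_1) \in C^h_{r/2,\mu}(Y)$, constructing one slant cylinder whose base lies inside the nonpositivity slice $B_\rho(z) \times \{\tau\}$ and whose top apex is $(x_1,t_1)$. Fix such a point, set $M = \sup_{C^h_{r,\mu}(Y)} u^+$, and introduce the reduced radius $\rho' = \min(\rho,\, r/4)$. After the space-time translation $(x',t') = (x-z,\,t-\tau)$ (which preserves \eqref{elliptic} and \eqref{wei-osc-cond} by Remark~\ref{remark-0912}), I consider the slant cylinder $V_{\rho'}(Y'')$ with $Y'' = (x_1-z,\,t_1-\tau)$. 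In original coordinates its bottom face $\overline{B_{\rho'}(z)} \times \{\tau\}$ sits inside $B_\rho(z) \times \{\tau\}$, where $u \leq 0$ by hypothesis.

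Next I verify that $V_{\rho'}(Y'')$, written in the original coordinates, is contained in $C^h_{r,\mu}(Y)$. Spatially, $|z-y| \leq r-\rho \leq r-\rho'$ (since $B_\rho(z) \subset B_r(y)$ and $\rho' \leq \rho$) and $|x_1-y| < r/2 \leq r-\rho'$ (since $\rho' \leq r/4$), so convexity of $B_{r-\rho'}(y)$ keeps the axis segment from $z$ to $x_1$ inside $B_{r-\rho'}(y)$; adding the radius $\rho'$ shows every time-slice lies in $B_r(y)$. Temporally, $\tau \geq s - hr^2(\mu)_{B_r(y)}^{1/n}$ and $t_1 < s$ are immediate from \eqref{tau-condition} and $(x_1,t_1) \in C^h_{r/2,\mu}(Y)$. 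Taking $y_0 = y$, $R = 2r$ in \eqref{Vr-in}, I then check the slant condition \eqref{slant-cyl}. Combining \eqref{tau-condition} with $t_1 > s-h(r/2)^2(\mu)_{B_{r/2}(y)}^{1/n}$ and the doubling inequality $(\mu)_{B_{r/2}(y)}^{1/n} \leq 2(\mu)_{B_r(y)}^{1/n}$ (from Lemma~\ref{A-p doubling} applied to $\mu \in A_{n+1}$) yields
\[
h\rho^2 (\mu)_{B_r(y)}^{1/n} \leq t_1 - \tau \leq hr^2 (\mu)_{B_r(y)}^{1/n}.
\]
Together with $|x_1-z| \leq 3r/2$ and the two-sided comparability of $(\mu)_{B_r(y)}^{1/n}$ and $(\mu)_{B_{2r}(y)}^{1/n}$ (again by doubling), this establishes \eqref{slant-cyl} with some $K = K(n, K_0, h, r/\rho) \in [1,\infty)$; by further inflating $K$ one also arranges the auxiliary bound $R \leq 2K^2\rho'$.

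Setting $\epsilon_2 = \epsilon_1(n,\nu,K_0,K)$ and $\beta_2 = \beta_1(n,\nu,K_0,K)$ from Lemma~\ref{slant-cylin-lemma}, the hypothesis $[[\omega]]_{\textup{BMO}(B_{2r}(y),\omega)} \leq \epsilon_2$ supplies the required $[[\omega]]_{B_{2r}(y),\omega} \leq \epsilon_1$. Since $u \leq 0$ on the bottom of $V_{\rho'}(Y'')$ and $u \leq M$ on its lateral boundary (which sits inside $C^h_{r,\mu}(Y)$), Lemma~\ref{slant-cylin-lemma} delivers $u(x_1,t_1) \leq \beta_1 M$, and the arbitrariness of $(x_1,t_1) \in C^h_{r/2,\mu}(Y)$ gives \eqref{second-lemma-conclusion} with $\beta_2 = \beta_1 \in (0,1)$. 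The principal obstacle is the quantitative verification of \eqref{slant-cyl} with $K$ depending only on $n$, $K_0$, $h$ and $r/\rho$: the doubling of $\mu$ is essential, as is the choice of the reduced radius $\rho' = \min(\rho, r/4)$, which is what keeps the slant cylinder from escaping $B_r(y)$ when $\rho$ is comparable to $r$.
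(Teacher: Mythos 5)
Your proof takes essentially the same route as the paper's: after translating $(z,\tau)$ to the origin, both apply Lemma~\ref{slant-cylin-lemma} to a slant cylinder whose base lies in the nonpositivity slice and whose top apex is the target point $(x_1,t_1)\in C^h_{r/2,\mu}(Y)$, then choose $\epsilon_2,\beta_2$ to be the $\epsilon_1,\beta_1$ of that lemma with a $K$ depending only on the admissible parameters. The paper works directly with the slant radius $\rho$ and takes $R=r$, $y_0=y$, while you shrink the radius to $\rho'=\min(\rho,r/4)$ and take $R=2r$, $y_0=y$.

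Your radius reduction is in fact a genuine refinement rather than a redundancy. The paper's justification that $V_\rho(Y')\subset C^h_{r,\mu}(Y)$ because ``both the top and bottom bases of $V_\rho(Y')$ are in $C^h_{r,\mu}(Y)$'' requires the top disc $B_\rho(y')$ (with $|y'-y|<r/2$) to lie in $B_r(y)$, which forces $\rho\le r/2$; but the hypothesis \eqref{tau-condition} only forces $\rho\le r/\sqrt{2}$, so there is a small uncovered window $\rho\in(r/2,r/\sqrt{2}]$. Your convexity argument with $\rho'\le r/4$ (any cap $\le r/2$ would do) closes that window cleanly, since the axis endpoints $z,x_1$ both lie in $B_{r-\rho'}(y)$ and hence so does the full axis. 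The only cost is that your $K$ additionally absorbs the doubling comparison between $(\mu)_{B_r(y)}$ and $(\mu)_{B_{2r}(y)}$, so it depends on $n$ and $K_0$ as well as $h$ and $r/\rho$; this is harmless because $\epsilon_2,\beta_2$ are allowed to depend on all of these. Your verification of the two-sided bound $h\rho^2(\mu)_{B_r(y)}^{1/n}\le t_1-\tau\le hr^2(\mu)_{B_r(y)}^{1/n}$ and of $R\in[\rho',2K^2\rho']$ is correct. Overall the proof is sound and slightly more careful than the one in the paper.
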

\begin{proof} 
By taking the linear translation $(x, t) \mapsto (x+z, t+\tau)$ and replacing $Y=(y, s)$ by $(y-z, s-\tau)$, we can assume without loss of generality that $(z,\tau)=(0,0)$. 
Then, in this setting, the assumption \eqref{tau-condition} is reduced to
\begin{equation} \label{r-rho-relation}
B_\rho(0) \subset B_r(y), \quad h\Big(\frac{ r^2}{2} + \rho^2\Big)(\mu)_{B_r(y)}^{1/n} \leq s \leq hr^2(\mu)_{B_r(y)}^{1/n},
\end{equation}
which particularly implies $\rho < r/\sqrt{2}$. 
Now, let $Y' = (y',s') \in C^h_{r/2, \mu}(Y)$ be fixed.  
Our goal is to apply Lemma \ref{slant-cylin-lemma} for the slant cylinder $V_\rho(Y')$ to derive the assertion \eqref{second-lemma-conclusion}. To this end, we need to verify the conditions in Lemma \ref{slant-cylin-lemma}. 

\smallskip
Firstly, we verify the condition \eqref{slant-cyl}. Observe that as $Y' = (y',s') \in C^h_{r/2, \mu}(Y)$, it follows that
\[
y' \in B_{r/2}(y) \quad \text{and} \quad s -\frac{hr^2}{4}(\mu)_{B_{r/2}(y)}^{1/n} < s' < s.
\]
From this, the bounds of $s$ in \eqref{r-rho-relation}, we infer that
\begin{equation} \label{s'-1}
h\rho^2(\mu)_{B_r(y)}^{1/n}  < s' <hr^2(\mu)_{B_r(y)}^{1/n} =h \Big(\frac{r}{\rho} \Big)^2 \rho^2(\mu)_{B_r(y)}^{1/n} . 
\end{equation}
In addition, as $0 \in B_r(y)$ due to \eqref{r-rho-relation}, it follows that $|y| = |y-0| < r$. From this and the triangle inequality, we obtain 
\[
|y'| \leq |y| + |y-y'| < \frac{3 r}{2},
\]
and note that $\rho\leq r$, consequently
\begin{equation} \label{s'-2}
 \Big( \frac{2h\rho^2 }{3 r^2}\Big)\rho(\mu)_{B_r(y)}^{1/n} |y'| <  h \rho^2(\mu)_{B_r(y)}^{1/n} < s'.
\end{equation}
Then, due to $h\in (0,1]$ and by taking 
\begin{equation}\label{K-formula}
K = \max\Big\{ \frac{3 r^2}{2h\rho^2}, h\Big(\frac{r}{\rho} \Big)^2  \Big\} =\frac{3}{2h}\Big(\frac{r}{\rho}\Big)^2\geq 1, 
\end{equation}
we infer from \eqref{s'-1} and \eqref{s'-2} that
\[
K^{-1} \rho(\mu)_{B_r(y)}^{1/n} |y'| \leq s' \leq K \rho^2(\mu)_{B_r(y)}^{1/n}.
\]
This implies that the condition \eqref{slant-cyl} for the slant cylinder $V_\rho(Y')$ holds. 

\smallskip
Next, it is clear that condition \eqref{Vr-in} with $\rho$ in place of $r$ and $r$ in place of $R$ also holds. To see this, note that $V_\rho(Y') \subset C^h_{r, \mu}(Y)$ as both the top and bottom bases of $V_\rho(Y')$ are in $C^h_{r, \mu}(Y)$. Then, for any $(x,t) \in V_\rho(Y')$, we have $x \in B_r(y)$. 

\smallskip
Finally, we verify that $r \in [\rho, 2K^2\rho]$. Note that as $r\geq \rho$ and $h\in (0,1)$, we have
\[
\rho\leq r<\frac{9r^3}{4h^2\rho^3}r=\frac{9r^4}{4h^2\rho^4}\rho=K^2\rho \leq 2K^2\rho.
\]

\smallskip
Now, from the choice of $K = K(r/\rho,h)$ in \eqref{K-formula}, let
\begin{equation}\label{epsilon-2, beta-2}
\begin{split}
 \epsilon_2(n,\nu, K_0, r/\rho, h) &= \epsilon_1 (n, \nu,K_0, K) \in (0,1)\quad \text{and}\\
\beta_2 (n,\nu, K_0, r/\rho, h)&= \beta_1(n, \nu, K_0, K) \in (0,1),
\end{split}
 \end{equation}
where the numbers $\epsilon_1$ and $\beta_1$ are defined in \eqref{def-beta_2} in  Lemma \ref{slant-cylin-lemma}. Then, we apply Lemma \ref{slant-cylin-lemma} to the slant cylinder $V_\rho(Y')$ to infer that
\[
u(Y') \leq \beta_2 \sup_{\partial_pV_\rho(Y')} u^+ \leq \beta_2 \sup_{C^h_{r,\mu}(Y)} u^+.
\]
As $Y'$ is arbitrary in $C^h_{r/2, \mu}(Y)$, we conclude that
\[
\sup_{C^h_{r/2, \mu}(Y)} u^+ \leq \beta_2 \sup_{C^h_{r,\mu}(Y)} u^+.
\]
The proof of the lemma is completed.
\end{proof} 
\begin{remark}\label{h-dependce-1}  If $n,\nu, K_0, r/\rho$ defined in Proposition \ref{second-growth-lemma} are fixed, then the function $h \mapsto \epsilon_2(n,\nu, K_0, r/\rho,h)$ is increasing in $(0,1]$, and the function $h \mapsto \beta_2(n,\nu, K_0, r/\rho, h)$ is decreasing in $ (0,1]$.
\end{remark}
\begin{proof} It follows from \eqref{def-beta_2}, \eqref{K-formula}, and \eqref{epsilon-2, beta-2} that there are positive constants $c_1, c_2$ depending on $n, \nu, K_0, r/\rho$ so that
\begin{align*}
\epsilon_2(n,\nu, K_0, r/\rho,h) & = c_1 h^{4-\frac{2}{n+1}} e^{- c_2 h^{-3}} \quad \text{and} \\
\beta_2(n,\nu, K_0, r/\rho, h) & = 1-\frac{9}{32}e^{- c_2 h^{-3}}.
\end{align*}
The assertions then follow. 
\end{proof} 

We note that the numbers $\epsilon_2$ and $\beta_2$ defined in Proposition \ref{second-growth-lemma} both depend on the ratio $r/\rho$. To conclude this subsection, we introduce the following corollary, sometimes referred to as the prop-up lemma, which removes these mentioned dependences. This corollary is the second main result in this subsection and it plays a key role in the proof of Theorem \ref{Harnack inequality}.
\begin{corollary}[Prop-up Lemma]\label{prop-up-lemma} For every $h \in (0,1]$ and $K_0\in[1,\infty)$, there exist $\gamma = \gamma(n, \nu, K_0, h)>0$, and sufficiently small   $\bar{\epsilon}_2= \bar{\epsilon}_2(n, \nu, K_0, h)\in (0,1)$ such that the following assertion holds. Assume that \eqref{elliptic} holds in $C_{r,\mu}(Y)$ for some $r>0$ and $Y = (y,s) \in \mathbb{R}^{n+1}$, and assume that $\omega \in A_{1+\frac{1}{n}}$ satisfies
\begin{equation} \label{bar-epsilon-cond}
 [\omega]_{A_{1+\frac{1}{n}}}\leq K_0 \quad \text{and} \quad  [[\omega]]_{\textup{BMO}(B_{2r}(y), \omega)} \leq \bar{\epsilon}_2.
\end{equation}
Then, for every $u \in C^{2,1}(C_{r, \mu}(Y)) \cap C(\overline{{C}_{r,\mu}(Y)})$ satisfying
\[
u \geq 0 \quad \text{and} \quad  \mathcal{L} u \geq 0 \quad \text{in} \quad C_{r, \mu}(Y).
\]
it follows that 
\begin{equation} \label{prop-up-assert}
 \inf_{B_\rho(z)} u(\cdot, \tau)\leq  \Big(\frac{4r}{\rho}\Big)^{\gamma} \inf_{B_{r/2}(y)} u(\cdot, \sigma),
\end{equation}
for any $\rho \in (0, r]$, $z \in \mathbb{R}^{n+1}$, and $\tau, \sigma \in \mathbb{R}$ such that
\begin{equation} \label{prop-cond-ball}
B_\rho(z) \subset B_r(y) \quad \text{and} \quad s -r^2 (\mu)_{B_r(y)}^{1/n} \leq \tau < \tau + h r^2 (\mu)_{B_r(y)}^{1/n} \leq \sigma \leq s.
\end{equation}
\end{corollary}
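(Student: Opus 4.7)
The plan is to propagate the lower bound
\[
m := \inf_{B_\rho(z)} u(\cdot, \tau) \geq 0
\]
from $B_\rho(z) \times \{\tau\}$ to $B_{r/2}(y) \times \{\sigma\}$ by iterating the Second Growth Lemma (Proposition \ref{second-growth-lemma}) through a finite chain of weighted parabolic cylinders whose spatial radii double at each link, keeping the outer/inner scale ratio fixed at $4$. First I would reduce to $m > 0$ and set $v := m - u$, so that $\mathcal{L} v = -\mathcal{L} u \leq 0$ in $C_{r,\mu}(Y)$, $v \leq m$ everywhere (as $u \geq 0$), and $v(\cdot, \tau) \leq 0$ on $B_\rho(z)$; estimate \eqref{prop-up-assert} is then equivalent to showing $\sup_{B_{r/2}(y)} v(\cdot, \sigma) \leq (1 - (\rho/(4r))^\gamma) m$ for a suitable $\gamma > 0$.

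Set $\rho_k := 2^k \rho$ for $k = 0, 1, \ldots, K$, where $K$ is the least integer with $\rho_K \geq r/4$, so $K \leq \log_2(r/\rho) + O(1)$. At step $k$ I invoke Proposition \ref{second-growth-lemma} on the cylinder $C^{h'}_{4\rho_k,\mu}(z_k, s_k)$ with inner ball $B_{\rho_k}(z_k)$ and auxiliary height parameter $h' := h/4$ (chosen so that the time budget below fits). Because the outer/inner ratio is always $4$, the constants $\epsilon_2 = \epsilon_2(n,\nu,K_0,4,h')$ and $\beta_2 = \beta_2(n,\nu,K_0,4,h') \in (0,1)$ produced by the proposition are independent of $k$, and I set $\bar{\epsilon}_2 := \epsilon_2$. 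Inductively assuming $u(\cdot, \tau_k) \geq m_k := (1-\beta_2)^k m$ on $B_{\rho_k}(z_k)$ and applying the proposition to $v_k := m_k - u$ (which is a subsolution with $v_k \leq m_k$ and $v_k(\cdot,\tau_k) \leq 0$ on $B_{\rho_k}(z_k)$) yields $u \geq m_{k+1}$ on $C^{h'}_{2\rho_k,\mu}(z_k,s_k)$, and hence on $B_{\rho_{k+1}}(z_k) \times \{\tau_{k+1}\}$ for $\tau_{k+1}$ taken at the top of that sub-cylinder. The center $z_{k+1}$ is then translated slightly along the segment from $z$ to $y$ so that both $B_{\rho_{k+1}}(z_{k+1}) \subset B_r(y)$ and $B_{8\rho_{k+1}}(z_{k+1}) \subset B_{2r}(y)$ persist (required for the equation and the BMO hypothesis at the next step). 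A final application on the full-scale cylinder $C^{h'}_{r,\mu}(y, \cdot)$ with inner ball $B_{\rho_K}(y)$ (ratio $r/\rho_K \in [4,8]$, so $\beta_2$ remains bounded away from $1$) carries the bound onto $B_{r/2}(y)$.

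After $K+1$ steps, $(1-\beta_2)^{K+1} \geq (\rho/(4r))^\gamma$ for a suitably chosen $\gamma = \gamma(n,\nu,K_0,h) > 0$, so $u \geq (\rho/(4r))^\gamma m$ on $B_{r/2}(y)$ at some time $\tau_{K+1} \leq \sigma$; the maximum principle (Corollary \ref{maximum principle}) then extends this bound from $\tau_{K+1}$ to time $\sigma$, completing the proof. The main obstacle is the twofold bookkeeping. Geometrically, the centers $z_k$ must be chosen so that all invoked cylinders stay inside $C_{r,\mu}(Y)$ and all invoked BMO balls stay inside $B_{2r}(y)$, while the growing balls $B_{\rho_k}(z_k)$ ultimately cover $B_{r/2}(y)$. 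Chronologically, the cumulative time $\tau_{K+1} - \tau$ must not exceed the given budget $\sigma - \tau \geq h r^2 (\mu)_{B_r(y)}^{1/n}$; the elementary bound $(\mu)_{B_{4\rho_k}(z_k)}^{1/n} \leq (r/(4\rho_k))(\mu)_{B_r(y)}^{1/n}$ (from $B_{4\rho_k}(z_k) \subset B_r(y)$ and monotonicity of $\mu$) turns the telescoping sum $\sum_k 16 h' \rho_k^2 (\mu)_{B_{4\rho_k}(z_k)}^{1/n}$ into at most $4 h' r^2 (\mu)_{B_r(y)}^{1/n}$, which with $h' = h/4$ fits exactly within the budget.
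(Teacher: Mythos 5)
Your overall strategy --- chaining Proposition \ref{second-growth-lemma} through a sequence of cylinders with fixed outer/inner ratio so that the growth constant stays uniform, and controlling both the geometric nesting of balls and the cumulative time budget --- is the same approach the paper takes (the paper uses ratio $8$ and decreasing radii $r_k = 4^{-k}r$, you use ratio $4$ and doubling radii $\rho_k = 2^k\rho$; that is a cosmetic difference). Your time-budget estimate $(\mu)_{B_{4\rho_k}(z_k)}^{1/n} \le \frac{r}{4\rho_k}(\mu)_{B_r(y)}^{1/n}$ is correct, and the count $K+1 \lesssim \log_2(r/\rho)$ yields the right exponent $\gamma$.

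However, the final step is a genuine gap. You land the iteration at some time $\tau_{K+1} \le \sigma$ and then claim that ``the maximum principle (Corollary \ref{maximum principle}) extends this bound from $\tau_{K+1}$ to time $\sigma$.'' This does not work. To push a lower bound $u \ge c$ from $B_{r/2}(y)\times\{\tau_{K+1}\}$ forward to $B_{r/2}(y)\times\{\sigma\}$ via the comparison principle you would need $u \ge c$ on the lateral boundary $\partial B_{r/2}(y)\times(\tau_{K+1},\sigma)$ as well, but all you know there is $u \ge 0$. A non-negative supersolution can perfectly well decay below $c$ on $B_{r/2}(y)$ as $t$ advances (heat flows out through the sides), so the claimed extension is false in general. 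Put differently, propagating lower bounds forward in time is exactly what the growth lemmas are for --- it is not a consequence of the ABP/maximum principle alone.

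The paper avoids this by arranging the chain so that it lands \emph{exactly} at $\sigma_0 = \tau + hr^2(\mu)_{B_r(y)}^{1/n}$ (its cylinders $C^k = B^k\times(0,s_k)$ are top-anchored, with $s_0 = hr^2(\mu)_{B_r(y)}^{1/n}$ by construction, so the last application of the growth lemma produces the infimum at $\sigma_0$ directly). It then reaches an arbitrary $\sigma\in[\sigma_0,s]$ not by extending in time but by re-running the entire chaining argument with the larger height parameter $\bar h \in [h,1]$ for which $\sigma = \bar h\, r^2(\mu)_{B_r(y)}^{1/n}$, and it uses the monotonicity from Remark \ref{h-dependce-1} (the maps $h\mapsto \bar\epsilon_2(\cdot,h)$ increasing and $h\mapsto\gamma_1(\cdot,h)$ decreasing) to keep the constant $\gamma$ and the smallness threshold $\bar\epsilon_2$ uniform in $\bar h$. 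This monotonicity observation is the missing ingredient in your write-up: without it, re-running at a different $h$ would change your $\epsilon_2$ and $\gamma$, and with it, the ad hoc maximum-principle step becomes unnecessary. A secondary, easily fixable point: your last application uses ratio $r/\rho_K\in[4,8]$, so $\bar\epsilon_2$ should be taken as the more restrictive $\epsilon_2(n,\nu,K_0,8,h')$ rather than the ratio-$4$ constant (the map $r/\rho\mapsto\epsilon_2$ is decreasing).
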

\begin{proof} 
We follow the approach used in \cite{FeSa}. By taking a linear translation, we can assume with loss of generality that $(z,\tau) =(0,0)$. In this setting, \eqref{prop-cond-ball} is reduced to  
\begin{equation*} 
B_\rho \subset B_r(y) \quad \text{and} \quad s -r^2 (\mu)_{B_r(y)}^{1/n} \leq 0 <  h r^2 (\mu)_{B_r(y)}^{1/n} \leq\sigma \leq s.
\end{equation*}
Let us denote $\sigma_0 = h r^2 (\mu)_{B_r(y)}^{1/n}$, and we begin with the claim that there is $\gamma_1 = \gamma_1(n, \nu, K_0, h)>0$ such that
\begin{equation} \label{prop-up-assert-reduced}
\inf_{B_\rho} u(\cdot, 0)\leq\Big(\frac{4r}{\rho}\Big)^{\gamma_1} \inf_{B_{r/2}(y)} u(\cdot, \sigma_0).
\end{equation}

To prove the claim, let us denote
\[
m=\displaystyle{\inf_{B_{\rho}} u(\cdot, 0)}.
\]
As \eqref{prop-up-assert-reduced} is trivial when $m=0$, from now on, we assume that $m>0$. The main idea is to build a stair with blocks of cylinders of certain heights to climb from the base at $\{t= \tau\}$ to the top level at $\{t= \sigma_0\}$, and apply Proposition~\ref{second-growth-lemma} to each block iteratively to achieve \eqref{prop-up-assert-reduced}. To this end, for every $k=0, 1, 2,\ldots$, let
\[ 
r_k = 4^{-k} r \quad \text{and} \quad y^k =y^{\infty}+4^{-k}(y-y^{\infty}),
\]
where 
\begin{equation*}  
y^{\infty}=\Big\{
 \begin{array}{ll}
\rho y/(\rho-r) &  \quad \text{if}\quad \rho\in(0,r), \\
 0&  \quad \text{if}\quad \rho=r.
\end{array} \Big.
\end{equation*} 
Also, for $k =0,1, 2,\ldots$, let us write
\begin{equation}\label{definition of cylinders}
 B^k =B_{r_k}(y^k),\quad C^k=C^h_{r_k,\mu}(Y^k)=B^k\times \big(0, s_k\big), 
 \end{equation}
where $s_k =hr_k^2(\mu)^{1/n}_{B^k}$ and $Y^k=(y^k, s_k)$.
Next, as $\rho \in (0, r]$, there exists a unique $k_0\in\N$ such that 
\begin{equation}\label{rho}
4^{-k_0-1}r<\rho\leq 4^{-k_0}r.
\end{equation}
Due to \ $y^{\infty}+\frac{\rho}{r}(y-y^{\infty})=0$, and the constructions of $B^k$, it is not hard to see that
\begin{equation*} 
\begin{aligned}
& B^{k_0+1}\subset B_{\rho}\subset B^{k_0}\subset \cdots \subset B^1\subset B^0=B_r(y) \qquad \text{and}\\
& C^{k_0}\subset C^{k_0-1}\subset \cdots \subset C^1 \subset C^0 =C^h_{r,\mu}(Y^0).
\end{aligned}
\end{equation*}

\smallskip
Now, let 
\begin{equation}\label{bar-epsilon}
\bar{\epsilon}_{2}=\epsilon_2(n,\nu, K_0, 8, h) \in (0,1)\quad \text{and}\quad \bar{\beta}=\beta_2(n,\nu, K_0, 8, h)\in (0,1),
\end{equation}
where $\epsilon_2, \beta_2$ are the numbers defined in Proposition \ref{second-growth-lemma}. Also let
 \[ 
 m_k = \inf_{B_{r_k/2}(y^k)} u (\cdot, s_k), \quad k =0, 1,\ldots, k_0.
\] 
Our aim is to apply Proposition \ref{second-growth-lemma} on $C^k$ with $k \in \{0, 1, \ldots, k_0\}$ to derive  \eqref{prop-up-assert-reduced}.  We claim that 
\begin{equation} \label{claim-prop-up-as}
m_k  \geq (1-\bar\beta)^{k_0+1 -k} m, \quad \forall \ k = 0, 1, \ldots, k_0.
\end{equation}
We split the proof of the claim into 3 steps starting with $k=k_0$ and iteratively moving down to $k=0$.

\smallskip

\smallskip
\noindent
\textbf{Step I}.  We prove \eqref{claim-prop-up-as} when $k =k_0$. We note that as $B^{k_0+1}\subset B_{\rho}$, it holds that
 \[
 \inf_{B^{k_0+1}} u(\cdot, 0)\geq m>0.
 \]
Then, set $v_{k_0}=1-m^{-1}u$. Since $u\geq 0$ in $C_{r,\mu}(Y)$, we see that
\[ v_{k_0} \leq 1 \quad \text{and} \quad \mathcal{L} v_{k_0}\leq 0 \quad \text{in} \quad C^{k_0}.
\]
Moreover,
\[
v_{k_0}(\cdot,0)\leq 0 \quad \text{in} \quad  B^{k_0+1}.
\]
Recall the definition of $C^{k_0}$ in \eqref{definition of cylinders},  and set
\begin{align*}
z^{k_0}=y^{k_0+1}, \quad \rho_{k_0}=2^{-2k_0-3}r,\quad \tau_{k_0}=\tau=0.
\end{align*}
Then, note that
\[
 r_{k_0}/\rho_{k_0}=8, \quad B_{\rho_{k_0}}(z^{k_0})\subset B^{k_0}\quad \text{and}\quad  0=\tau_{k_0}\leq \big(r_{k_0}^2/2-\rho_{k_0}^2\big)h(\mu)^{1/n}_{B^{k_0}}.
\]
We apply Proposition \ref{second-growth-lemma} to $v_{k_0}$ on $C^{k_0}$, in which $Y^{k_0}$, $r_{k_0}$, $z^{k_0}$, $\rho_{k_0}$ and $\tau_{k_0}$ are in place of $Y$, $r$, $z$, $\rho$ and $\tau$, respectively.  We then infer that
\[
 \sup_{B_{r_{k_0}/2}(y^{k_0})}v_{k_0}(\cdot, s_{k_0})\leq \bar{\beta} \sup_{C^{k_0}}v_{k_0}\leq \bar{\beta}.
 \]
From this and the definition of $v_{k_0}$, we conclude that 
\begin{equation}\label{first-step-conclusion}
m_{k_0}=\inf_{B_{r_{k_0}/2}(y^{k_0})}u(\cdot, s_{k_0})\geq(1-\bar \beta)m.
\end{equation}
Hence, \eqref{claim-prop-up-as} holds when $k = k_0$.

\smallskip
\noindent
\textbf{Step II}.  We verify \eqref{claim-prop-up-as} when $k = k_0 -1$. From \eqref{first-step-conclusion} and as $m>0$, we see that $m_{k_0} >0$. For $v_{k_0-1}=1-(m_{k_0})^{-1}u\leq 1$, we see that
\[ 
\mathcal{L} v_{k_0-1}\leq 0 \quad \text{in}  \quad C^{k_0-1}, \]
and 
\[ 
v_{k_0-1}(\cdot,s_{k_0})\leq 0 \quad \text{in} \quad  B_{r_{k_0}/2}(y^{k_0}).
\]
Similarly as in {\bf Step I}, we set
\begin{equation}\label{prop-up iteration}
\begin{aligned}
z^{k_0-1}=y^{k_0},\quad \rho_{k_0-1}=2^{-2(k_0-1)-3}r,\quad \tau_{k_0-1}=s_{k_0}.
\end{aligned}
\end{equation}
It follows by direct computation that
\begin{align*}
 B_{\rho_{k_0-1}}(z^{k_0-1})\subset B^{k_0-1} \quad \text{and}\quad
 0<s_{k_0}=\tau_{k_0-1}\leq \big(r_{k_0-1}^2/2-\rho_{k_0-1}^2\big)h(\mu)^{1/n}_{B^{k_0-1}}.
\end{align*}
Then, as in {\bf Step I}, we apply Proposition \ref{second-growth-lemma} to $v_{k_0-1}$ on $C^{k_0-1}$ to get
\[
\inf_{B_{r_{k_0-1}/2}(y^{k_0-1})}u(\cdot, s_{k_0-1})\geq(1-\bar \beta)m_{k_0}.
\]
From this and  \eqref{first-step-conclusion}, we infer that
\begin{equation}\label{second-step-conclusion}
m_{k_0-1}  \geq(1-\bar \beta)^2m.
\end{equation}
Hence \eqref{claim-prop-up-as} holds when $k = k_0-1$.

\smallskip
\noindent
\textbf{Step III}. We carry out the iteration to prove \eqref{claim-prop-up-as} for $k\leq k_0-2$. For $l=k_0-2, k_0-3, \cdots, 1, 0$, we set
\[
v_{l}=1-(m_{l+1})^{-1}u,
\]
and as in \eqref{prop-up iteration}, we also set
\begin{equation*}
\begin{aligned}
z^{l}=y^{l+1}, \quad \rho_{l}=2^{-2l-3}r,\quad \tau_{l}=s_{l+1}.
\end{aligned}
\end{equation*}
As in {\bf Step II}, we apply Proposition \ref{second-growth-lemma} inductively to $v_l$ in $C^l$ for $l=k_0-2, k_0-3, \ldots, 1, 0$. We obtain
\begin{equation*}
m_{l}\geq(1-\bar \beta)m_{l+1}, \quad \text{for all}\quad l=k_0-2, k_0-3,\ldots, 1, 0.
 \end{equation*}
 This, \eqref{first-step-conclusion}, and \eqref{second-step-conclusion} imply \eqref{claim-prop-up-as}.  Therefore, the claim \eqref{claim-prop-up-as} is proved.
 
\smallskip
Now, we use \eqref{claim-prop-up-as} to derive \eqref{prop-up-assert-reduced}.  Note that \eqref{claim-prop-up-as} when $k=0$ can be rewritten as
\begin{equation}\label{conclusion-1}
 \inf_{B_{\rho}} u(\cdot,0)\leq(1-\bar \beta)^{-k_0-1}\inf_{B_{r/2}(y)} u(\cdot, \sigma_0).
\end{equation}
Then, by setting 
\begin{equation}\label{gamma_1 choice}
\gamma_1 =\gamma_1 (n,\nu,K_0,h) =-\log_4\big(1-\bar\beta\big)>0,
\end{equation}
and using \eqref{rho}, we infer that
\[
(1-\bar\beta)^{-k_0-1}\leq \Big(\frac{4r}{\rho}\Big)^{\gamma_1}.
\]
From this and \eqref{conclusion-1}, it follows that
\begin{equation} \label{prop-up-06-11}
 \inf_{B_\rho} u(\cdot, 0)\leq\Big(\frac{4r}{\rho}\Big)^{\gamma_1} \inf_{B_{r/2}(y)} u(\cdot, \sigma_0).
\end{equation}
Hence \eqref{prop-up-assert-reduced} is proved. This also implies \eqref{prop-up-assert} when $\sigma = \sigma_0$.

\smallskip
It remains to prove \eqref{prop-up-assert} for all $\sigma \in [\sigma_0, s]$. We note that with the fixed numbers $n,\nu, K_0$, it follows from \eqref{bar-epsilon}, and Remark \ref{h-dependce-1} that the map $h \mapsto \bar \epsilon_2(n,\nu, K_0,h)$ is increasing, and the map $h \mapsto  \bar \beta(n,\nu, K_0, h)$ is decreasing on $(0,1]$. Then, from the choice of $\gamma_1$ in \eqref{gamma_1 choice}, we also infer that the map $\hat{\gamma}: (0, 1] \mapsto \R$ defined by
\[ \hat{\gamma}(h)= \gamma_1(n,\nu, K_0, h), \quad h \in (0, 1]\]
is decreasing in  $(0,1]$. 

\smallskip
Now, for $\sigma\in [\sigma_0, s]$, let $\bar{h} \in [h, 1]$ such that $\sigma=\bar{h}r^2(\mu)_{B_r(y)}^{1/n}$. Then, under the assumption \eqref{bar-epsilon-cond} and as $\bar{\epsilon}_2(n,\nu, K_0,h) \leq \bar{\epsilon}_2(n,\nu, K_0, \bar{h})$, we can apply \eqref{prop-up-assert-reduced}, with $\bar{h}$ in place of $h$, to obtain
\[
\inf_{B_\rho(z)} u(\cdot, 0)\leq\Big(\frac{4r}{\rho}\Big)^{\hat{\gamma}(\bar h)} \inf_{B_{r/2}(y)} u(\cdot, \sigma).
\]
From this, the fact that $\hat{\gamma}(\bar{h}) \leq \hat{\gamma}(h)$, and as $\rho \in (0, r]$, we have
\[
\inf_{B_\rho(z)} u(\cdot, 0) \leq\Big(\frac{4r}{\rho}\Big)^{\gamma} \inf_{B_{r/2}(y)} u(\cdot, \sigma),
\]
where $\gamma = \hat{ \gamma}(h) = \gamma_1(n,\nu, K_0, h)$. This implies \eqref{prop-up-assert}, and the proof of the lemma is completed.
\end{proof}

\subsection{Third growth lemma} 
The main result of this subsection is Proposition \ref{third-growth}, referred to as the third growth lemma. To prove this proposition, we need several lemmas. Let us recall the type of cylinders defined in \eqref{hat cylinder} 
\[
\hat{C} = \hat{C}_{\rho, \mu}(X_0)=B_{\rho}(x_0)\times \big(t_0+\rho^2(\mu)_{B_\rho(x_0)}^{1/n}, t_0+K_1\rho^2(\mu)_{B_\rho(x_0)}^{1/n}\big),
\]
where $X_0=(x_0,t_0)\in \R^{n+1}$, $\rho>0$, and $K_1\in (1,\infty)$ is fixed.  Corresponding to $\hat{C}$, we also denote 
\begin{equation}\label{cylinder U}
U = U_{\rho, \mu}(X_0)=B_{2\rho}(x_0)\times \big(t_0, t_0+K_1\rho^2(\mu)_{B_\rho (x_0)}^{1/n}\big).
\end{equation}
We note that $\hat{C} \subset U$. The following type of growth lemma is needed in the proof of Lemma \ref{alternative-lemma}.
\begin{lemma} \label{up-lemma-5} For $\nu\in(0,1)$, $K_0\in[1, \infty)$, $K_1\in (1,\infty)$,  there exist 
\[ \hat{\epsilon}_2 =\hat{\epsilon}_2 (n, \nu, K_0, K_1) \in (0,1) \quad \text{and} \quad \hat{\beta}_2 =\hat{\beta}_2(n, \nu, K_0, K_1) \in (0,1)
\] 
such that the following statement holds. Assume that \eqref{elliptic} holds in $U$, and that $\omega \in A_{1+\frac{1}{n}}$ satisfies 
\[ [\omega]_{A_{1+\frac{1}{n}}}\leq K_0 \quad \text{and} \quad  
 [[\omega]]_{B_{2\rho}(x_0), \omega} \leq \hat{\epsilon}_2,
 \] 
where $U$ is defined in \eqref{cylinder U} with some $\rho>0$ and $X_0 = (x_0, t_0) \in \mathbb{R}^n \times \mathbb{R}$.  Then, for every function $u \in C^{2,1}(U) \cap C(\overline{U})$ satisfying
\[
\mathcal{L} u \leq 0 \quad \text{in} \quad U \quad \text{and} \quad   u (\cdot, t_0) \leq 0 \quad \text{on} \quad B_{\rho/2}(x_0),
\]
it holds that
\[
\sup_{\hat{C}} u^+ \leq \hat{\beta}_2 \sup_{U}u^+.
\]
\end{lemma}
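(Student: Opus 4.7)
The plan is to reduce the lemma to the slant cylinder growth lemma (Lemma \ref{slant-cylin-lemma}) by constructing, for each apex point $Y'=(y',s')\in\hat{C}$, a suitable slant cylinder $V_{\rho/2}(Y'-X_0)$ (after a translation sending $X_0$ to the origin) whose base sits in $\overline{B_{\rho/2}(x_0)}\times\{t_0\}$, where by hypothesis $u\leq 0$. First I will translate: replacing $(x,t)\mapsto(x-x_0,t-t_0)$, we may assume $X_0=0$. Then for each $Y'=(y',s')\in\hat{C}$, we have $y'\in B_\rho$ and $\rho^2(\mu)_{B_\rho}^{1/n}\leq s'\leq K_1\rho^2(\mu)_{B_\rho}^{1/n}$. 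Define the slant cylinder $V_{\rho/2}(Y')=\{(x,t):|x-(t/s')y'|<\rho/2,\ 0<t<s'\}$; its base in $\{t=0\}$ is exactly $B_{\rho/2}$ where $u(\cdot,0)\leq 0$, and its apex is $Y'$.

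Next I will verify the two geometric hypotheses of Lemma \ref{slant-cylin-lemma} with $r=\rho/2$, $y_0=0$, and $R=2\rho$. By the triangle inequality, every $(x,t)\in V_{\rho/2}(Y')$ satisfies $|x|<\rho/2+|y'|\leq 3\rho/2<2\rho$, so condition \eqref{Vr-in} holds. For the aspect-ratio condition \eqref{slant-cyl} I would choose $K=\max\{8K_1,\,2^{n-1}K_0,\,\sqrt{2}\}$, which depends only on $n,K_0,K_1$, and guarantees $R=2\rho\in[\rho/2,K^2\rho]$. For the upper bound on $s'$, the trivial comparison $(\mu)_{B_\rho}^{1/n}\leq 2(\mu)_{B_{2\rho}}^{1/n}$ gives $s'\leq 2K_1\rho^2(\mu)_{B_{2\rho}}^{1/n}=8K_1(\rho/2)^2(\mu)_{B_{2\rho}}^{1/n}\leq K(\rho/2)^2(\mu)_{B_{2\rho}}^{1/n}$. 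For the lower bound, the doubling property \eqref{doubling property} of $\mu\in A_{n+1}$ (a consequence of Lemma \ref{A-p doubling}) gives $(\mu)_{B_{2\rho}}^{1/n}\leq 2^n K_0(\mu)_{B_\rho}^{1/n}$, so $K^{-1}(\rho/2)(\mu)_{B_{2\rho}}^{1/n}|y'|\leq K^{-1}2^{n-1}K_0\rho^2(\mu)_{B_\rho}^{1/n}\leq\rho^2(\mu)_{B_\rho}^{1/n}\leq s'$.

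With both conditions verified, and defining
\[
\hat{\epsilon}_2=\epsilon_1(n,\nu,K_0,K)\quad\text{and}\quad\hat{\beta}_2=\beta_1(n,\nu,K_0,K),
\]
where $K$ is the constant chosen above (so both depend only on $n,\nu,K_0,K_1$), the smallness assumption $[[\omega]]_{B_{2\rho}(x_0),\omega}\leq\hat{\epsilon}_2$ is exactly what Lemma \ref{slant-cylin-lemma} requires with $y_0=x_0$ and $R=2\rho$. Applying the lemma to $u$ on $V_{\rho/2}(Y')$, I obtain $u(y',s')\leq\hat{\beta}_2\sup_{\partial_pV_{\rho/2}(Y')}u^+$. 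Since the parabolic boundary decomposes as $(\overline{B_{\rho/2}(x_0)}\times\{t_0\})\cup S$, with $u\leq 0$ on the former and $S\subset U$ (by the spatial containment shown above together with $0<t<s'\leq K_1\rho^2(\mu)_{B_\rho}^{1/n}$), we conclude $u(Y')\leq\hat{\beta}_2\sup_U u^+$. Taking the supremum over $Y'\in\hat{C}$ yields the claim.

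The main technical point, and the only place where some care is genuinely needed, is verifying the asymmetric aspect-ratio condition \eqref{slant-cyl} simultaneously at both ends: on the upper end it is essentially trivial, but on the lower end one must compare $(\mu)_{B_{2\rho}}$ with $(\mu)_{B_\rho}$, which is precisely where the doubling constant of $\mu\in A_{n+1}$ (and hence the dependence on $K_0$) enters. Once this bookkeeping is in place the result follows directly from Lemma \ref{slant-cylin-lemma}.
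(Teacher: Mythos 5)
Your proof is correct and follows the same route as the paper: translate $X_0$ to the origin, reduce to the slant-cylinder growth lemma (Lemma~\ref{slant-cylin-lemma}), verify \eqref{Vr-in} and \eqref{slant-cyl} with $y_0=0$, $R=2\rho$, and an explicit $K$ built from $K_0, K_1, n$ via the doubling property, then take the supremum over apexes. The choice of $K$ differs from the paper's $\max\{2^nK_0K_1,2K_1\}$ only because you use radius $\rho/2$ instead of $\rho$; both values depend only on $n,K_0,K_1$, and the resulting $\hat\epsilon_2=\epsilon_1(n,\nu,K_0,K)$, $\hat\beta_2=\beta_1(n,\nu,K_0,K)$ are exactly the paper's choices.

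There is, however, one substantive difference worth noting, and it is in your favor. The paper, for each $Y=(y,s)$ with $y\in B_{\rho/2}$ and $s$ in the time interval of $\hat C$, applies Lemma~\ref{slant-cylin-lemma} to the cylinder $V_\rho(Y)$ and then unions the resulting balls $B_{\rho/2}(y)$ to cover $B_\rho$. But the base of $V_\rho(Y)$ at time $t_0$ is the full ball $B_\rho(x_0)$, and Lemma~\ref{slant-cylin-lemma} requires $u(\cdot,t_0)\le 0$ on that whole base, whereas the hypothesis of Lemma~\ref{up-lemma-5} only provides it on $B_{\rho/2}(x_0)$. Your construction uses $V_{\rho/2}(Y')$, whose base is exactly $\overline{B_{\rho/2}(x_0)}\times\{t_0\}$, and you let the apex $Y'$ range over all of $\hat C$, using only the apex value $u(y',s')\le \hat\beta_2\sup_{\partial_p V_{\rho/2}(Y')}u^+$. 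This sidesteps the mismatch entirely and needs no covering argument. In short: same idea, same key lemma, but your realization of it is the one that actually closes cleanly against the stated hypotheses.
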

\begin{proof}
Applying a linear translation, we may assume $X_0=(x_0,t_0)=(0,0)$, then we write $B_\rho=B_\rho(0)$. For any 
\[
Y=(y,s)\in B_{\rho/2}\times \big(\rho^2(\mu)_{B_\rho}^{1/n}, K_1\rho^2(\mu)_{B_\rho}^{1/n}\big),
\]
 we define the slant cylinder $V_{\rho}(Y)$ as in \eqref{def-slant}. Due to $K_1>1$ and $|y|< \rho/2$, it follows that
\begin{equation}\label{condition K}
K_1^{-1}\rho(\mu)_{B_\rho}^{1/n}|y|\leq s\leq K_1\rho^2(\mu)_{B_\rho}^{1/n}.
\end{equation}
Note that $V_{\rho}(Y) \subset U$, so \eqref{Vr-in} holds with $R=2\rho$ and $y_0=0$. Next, we verify the condition in \eqref{slant-cyl} with $R=2\rho$ and $y_0=x_0 =0$. By the doubling property of $\mu$ in \eqref{doubling property}, we have 
\[
\mu(B_{2\rho}) \leq 2^{n(n+1)}K_0^n \mu(B_{\rho}).
\]
Therefore, it follows from \eqref{cylinder measure formula} that
\[
\rho(\mu)_{B_{\rho}}^{1/n}=\sigma_n^{-1/n}\mu(B_{\rho})^{1/n}\geq 2^{-(n+1)}K_0^{-1}\sigma_n^{-1/n}\mu(B_{2\rho})^{1/n}=2^{-n}K_0^{-1}\rho(\mu)_{B_{2\rho}}^{1/n},
\]
where $\sigma_n=|B_1|$. On the other hand, we have 
\[
\rho^2(\mu)_{B_{\rho}}^{1/n}=\sigma_n^{-1/n}\rho\mu(B_{\rho})^{1/n}\leq \sigma_n^{-1/n}\rho\mu(B_{2\rho})^{1/n}=2\rho^2(\mu)_{B_{2\rho}}^{1/n}.
\]
Now, by taking $K=\max\{2^nK_0K_1, 2K_1\}$, we infer from the last two estimates and \eqref{condition K} that
\[
K^{-1}\rho(\mu)_{B_{2\rho}}^{1/n}|y|\leq s \leq K\rho^2(\mu)_{B_{2\rho}}^{1/n}.
\]
Hence,  \eqref{slant-cyl} holds.

\smallskip
Now, choose $\hat \epsilon_2=\epsilon_1(n, \nu, K_0, K)\in (0,1)$ and $\hat \beta_2=\beta_1(n, \nu, K_0, K)$, where $\epsilon_1$ and $\beta_1$ are the numbers defined in Lemma \ref{slant-cylin-lemma}. Then, applying Lemma \ref{slant-cylin-lemma} to $u$ in $V_{\rho}(Y)$ and using the fact that $V_{\rho}(Y)\subset U$, we obtain
\[
\sup_{B_{\rho/2}(y)}u(\cdot, s)\leq \hat \beta_2\sup_{\partial_p V_{\rho}(Y)}u^+\leq \hat\beta_2 \sup_{U}u^+.
\]
Since $Y=(y, s)$ is arbitrary in $B_{\rho/2}\times \big(\rho^2(\mu)_{B_\rho}^{1/n}, K_1\rho^2(\mu)_{B_\rho}^{1/n}\big)$, the assertion of the lemma follows. The lemma is proved.
\end{proof}

We prepare the following result related to our weighted cylinders, which will also be used in the proof of Lemma \ref{alternative-lemma}.
\begin{lemma}\label{inclusion-lemma}
Let $r>0$ and $Y=(y, s) \in \mathbb{R}^{n+1}$. Define the set
\[
 Q=B_{r}(y)\times\big(s- \frac{7}{2}r^2 (\mu)_{B_{2r}(y)}^{1/n},s-3 r^2 (\mu)_{B_{2r}(y)}^{1/n}\big).
\]
For each $\kappa_0>0$, there exist constants $\eta_0=\eta_0(n, K_0, \kappa_0)>0$ and $\tilde{r} >(1+ \eta_0)r$ such that the cylinder
\[ C'=B_{\tilde r} (y) \times \big(s- \frac{7}{2}r^2 (\mu)_{B_{2r}(y)}^{1/n},  
s- \big(3 - \frac{\kappa_0}{4}\big)r^2(\mu)_{B_{2r}(y)}^{1/n}\big)
\] 
 satisfies
\begin{equation}  \label{cyl-inclu-re-1}
Q \subset C', \quad \mu(C'\setminus Q) \leq \kappa_0 \mu(Q),
\end{equation}
and
\begin{equation}\label{dist cylinder c-1}
\textup{dist}\Big(Q, \partial C' \setminus \big(B_{\tilde r}(y)\times 
\big\{ s-\frac{7}{2}r^2(\mu)_{B_{2r}(y)}^{1/n}\big\}\big)\Big)  \geq  \min\Big\{\eta_0r, \frac{\kappa_0}{4}r^2(\mu)_{B_{2r}(y)}^{1/n}\Big\}.
\end{equation}
\end{lemma}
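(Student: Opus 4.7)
The approach is purely geometric/measure-theoretic: decompose the symmetric difference $C'\setminus Q$ into its natural two pieces, compute the $\mu$-measure of each, and then choose $\tilde r=(1+\eta_0)r$ with $\eta_0$ small enough (depending on $n,K_0,\kappa_0$) so that the annular contribution is controlled. The distance estimate is immediate from the definitions, so the real content is the measure estimate \eqref{cyl-inclu-re-1}.

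\textbf{Step 1 (Decomposition and bookkeeping).} Write $\tau:=r^2(\mu)_{B_{2r}(y)}^{1/n}$ so that $Q=B_r(y)\times(s-\tfrac72\tau,\,s-3\tau)$ and $C'=B_{\tilde r}(y)\times(s-\tfrac72\tau,\,s-(3-\tfrac{\kappa_0}{4})\tau)$. Since $B_r(y)\subset B_{\tilde r}(y)$, we split
\[
C'\setminus Q=\Bigl[(B_{\tilde r}(y)\setminus B_r(y))\times\bigl(s-\tfrac72\tau,s-(3-\tfrac{\kappa_0}{4})\tau\bigr)\Bigr]\cup\Bigl[B_r(y)\times\bigl(s-3\tau,s-(3-\tfrac{\kappa_0}{4})\tau\bigr)\Bigr].
\]
Recalling that $\mu$ depends only on $x$, integrating gives $\mu(C'\setminus Q)=(\tfrac12+\tfrac{\kappa_0}{4})\tau\,\mu(B_{\tilde r}(y)\setminus B_r(y))+\tfrac{\kappa_0}{4}\tau\,\mu(B_r(y))$, while $\mu(Q)=\tfrac12\tau\,\mu(B_r(y))$. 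Consequently, \eqref{cyl-inclu-re-1} is equivalent to the single scalar inequality
\[
\frac{\mu(B_{\tilde r}(y)\setminus B_r(y))}{\mu(B_r(y))}\le\frac{\kappa_0}{2+\kappa_0}.
\]

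\textbf{Step 2 (Small annulus has small $\mu$-mass).} This is where the $A_p$ structure enters. Since $\omega\in A_{1+1/n}$ with $[\omega]_{A_{1+1/n}}\le K_0$, we have $\mu\in A_{n+1}$ with $[\mu]_{A_{n+1}}\le K_0^n$ by \eqref{A-p' weight}. Apply Lemma~\ref{reverse property} to $\mu$ with the ball $B=B_{\tilde r}(y)$ and $S=B_{\tilde r}(y)\setminus B_r(y)$ to obtain, for some $\lambda_0=\lambda_0(n,K_0)>0$ and $N=N(n,K_0)>0$,
\[
\frac{\mu(B_{\tilde r}(y)\setminus B_r(y))}{\mu(B_{\tilde r}(y))}\le N\Bigl(1-(r/\tilde r)^n\Bigr)^{\lambda_0}.
\]
Combining with the doubling bound from Lemma~\ref{A-p doubling} (with $A=B_r(y)\subset B=B_{\tilde r}(y)$),
\[
\mu(B_{\tilde r}(y))\le K_0^n\bigl(\tilde r/r\bigr)^{n(n+1)}\mu(B_r(y)),
\]
we conclude
\[
\frac{\mu(B_{\tilde r}(y)\setminus B_r(y))}{\mu(B_r(y))}\le NK_0^n\bigl(\tilde r/r\bigr)^{n(n+1)}\bigl(1-(r/\tilde r)^n\bigr)^{\lambda_0}.
\]
Setting $\tilde r=(1+\eta_0)r$ with $\eta_0\in(0,1]$, the right-hand side is bounded by $NK_0^n 2^{n(n+1)}(n\eta_0)^{\lambda_0}$, which tends to $0$ as $\eta_0\to0$. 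Thus there exists $\eta_0=\eta_0(n,K_0,\kappa_0)>0$ such that the required bound $\le\kappa_0/(2+\kappa_0)$ holds; with this choice Step 1 gives $\mu(C'\setminus Q)\le\kappa_0\mu(Q)$, and $Q\subset C'$ is automatic from $\tilde r>r$.

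\textbf{Step 3 (Distance).} The set $\partial C'\setminus(B_{\tilde r}(y)\times\{s-\tfrac72\tau\})$ is the union of the lateral surface $\partial B_{\tilde r}(y)\times(s-\tfrac72\tau,\,s-(3-\tfrac{\kappa_0}{4})\tau)$ and the top face $B_{\tilde r}(y)\times\{s-(3-\tfrac{\kappa_0}{4})\tau\}$. The spatial distance from $\overline{B_r(y)}$ to $\partial B_{\tilde r}(y)$ is $\tilde r-r=\eta_0 r$, and the temporal distance from the top $\{s-3\tau\}$ of $Q$ to $\{s-(3-\tfrac{\kappa_0}{4})\tau\}$ is $\tfrac{\kappa_0}{4}\tau=\tfrac{\kappa_0}{4}r^2(\mu)_{B_{2r}(y)}^{1/n}$. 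Taking the minimum yields \eqref{dist cylinder c-1}, completing the proof.

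The only non-trivial point is Step 2; the rest is routine bookkeeping. The main obstacle is ensuring that the dependence on $K_0$ enters only through the $A_p$ structural constants, which is exactly what Lemma~\ref{reverse property} supplies.
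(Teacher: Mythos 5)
Your proof is correct, and it rests on the same core tools as the paper's: the computation of $\mu(Q)$ and $\mu(C')$ via \eqref{cylinder measure formula}, reduction to a scalar bound on the annular $\mu$-mass, and the reverse H\"older inequality (Lemma~\ref{reverse property}) to control $\mu(B_{\tilde r}\setminus B_r)$. The one genuine difference is in how $\tilde r$ is produced. The paper first invokes an intermediate-value argument on $g(\theta)=\mu(B_r)/\mu(B_\theta)$ to locate $\tilde r$ with the \emph{exact} identity $\mu(C')=(1+\kappa_0)\mu(Q)$, and then uses reverse H\"older a posteriori to prove $\tilde r\geq(1+\eta_0)r$; you instead \emph{prescribe} $\tilde r=(1+\eta_0)r$ up front and verify the required \emph{inequality} directly, at the cost of one extra application of doubling (Lemma~\ref{A-p doubling}) to pass from $\mu(B_{\tilde r})$ to $\mu(B_r)$. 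Your route is a touch more self-contained (no continuity argument), while the paper's gives equality and avoids the doubling step by working with $\mu(B_{\tilde r}\setminus B_r)/\mu(B_{\tilde r})$. One cosmetic remark: since you set $\tilde r=(1+\eta_0)r$, the statement's strict inequality $\tilde r>(1+\eta_0)r$ is met only after harmlessly halving $\eta_0$; the paper's proof also only delivers $\tilde r\geq(1+\eta_0)r$, so this is a shared off-by-nothing.
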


\begin{proof} 
Without loss of generality, we assume that $Y=(y, s) =(0,0)$. Then,  
\[ C'=B_{\tilde r}\times \big(-\frac{7}{2}r^2(\mu)_{B_{2r}}^{1/n},  \big(-3+\frac{\kappa_0}{4}\big)r^2(\mu)_{B_{2r}}^{1/n}\big).
\] 
Observe that if $\tilde r>r$ then $Q \subset C'$.  For a given $\kappa_0>0$, we look for $\tilde r>r$ so that
\begin{equation}\label{cylinder c-1-1}
\mu(C')=(1+\kappa_0)\mu(Q).
\end{equation}
Note that by \eqref{cylinder measure formula}, we have
\begin{align*}
\mu(Q) &= \frac{1}{4}\sigma_n^{-1/n} r \mu(B_r) \mu(B_{2r})^{1/n}\qquad \text{and}\\
\mu(C') &= \frac{1}{2}\sigma_n^{-1/n} \big(\frac{1}{2}+\frac{\kappa_0}{4}\big) r \mu(B_{\tilde{r}}) \mu(B_{2r})^{1/n},
\end{align*}
where $\sigma_n=|B_1|$. Then
\[
\frac{\mu(Q)}{\mu(C')} = \frac{2\mu(B_r)}{(2+\kappa_0)\mu(B_{\tilde{r}})},
\]
which implies that \eqref{cylinder c-1-1} is equivalent to
\begin{equation} \label{ball-10-052024}
\frac{\mu(B_r)}{\mu(B_{\tilde{r}})} =\frac{2+\kappa_0 } {2+ 2\kappa_0} <1.
\end{equation}
Let us denote $g(\theta) = \mu(B_r)/\mu(B_\theta)$ for $\theta\in [r,\infty)$. As $g$ is continuous, monotonically decreasing, 
\[
g(r) =1, \quad \text{and} \quad \lim_{\theta\rightarrow \infty} g (\theta) =0,
\]
we obtain the existence of $\tilde{r} >r$ such that $g(\tilde{r}) = \frac{2+\kappa_0 } {2+ 2\kappa_0}<1$, and then \eqref{ball-10-052024} holds.

\smallskip
Next, we prove that there is $\eta_0 = \eta_0(n, K_0, \kappa_0)>0$ so that $\tilde{r} \geq (1+\eta_0) r$. By \eqref{ball-10-052024}, it follows that
\[
\frac{\mu(B_{\tilde r}\setminus B_r)}{\mu(B_{\tilde r})}=1- \frac{\mu(B_r)}{\mu(B_{\tilde{r}})}  = \frac{\kappa_0}{2+2\kappa_0}.
\]
On the other hand, noting that $[\mu]_{A_{n+1}}=[\omega]_{A_{1+\frac{1}{n}}}^n\leq K_0^n$ and applying Lemma \ref{reverse property} to $\mu$ with $S=B_{\tilde r}\setminus B_r$ and $B=B_{\tilde r}$, we have
\[
\frac{\mu(B_{\tilde r}\setminus B_r)}{\mu(B_{\tilde r})}\leq N\Big(\frac{{\tilde r}^n-r^n}{{\tilde r}^n}\Big)^{\lambda_0} = N\Big[1-\Big(\frac{r}{\tilde r}\Big)^n\Big]^{\lambda_0},
\]
where $N=N(n, K_0) > 0$ and $\lambda_0=\lambda_0(n, K_0)>0$. Therefore, combining the previous two estimates, we have
\[
\frac{\kappa_0}{2+2\kappa_0}\leq N\Big[1-\Big(\frac{r}{\tilde r}\Big)^n\Big]^{\lambda_0},
\]
which implies
\[
\tilde{r} \geq \Big[1-\Big(\frac{\kappa_0}{N(2+2\kappa_0)} \Big)^{1/\lambda_0} \Big]^{-1/n}r>r.
\]
Hence, 
\begin{equation}\label{eta_0-1}
\tilde r \geq (1+\eta_0)r, \quad \text{for} \quad \eta_0 = \Big[1-\Big(\frac{\kappa_0}{N(2+2\kappa_0)} \Big)^{1/\lambda_0} \Big]^{-1/n} -1 >0.
\end{equation}
From \eqref{cylinder c-1-1} and the previous estimate, we obtain \eqref{cyl-inclu-re-1}. 
Moreover, the formula \eqref{dist cylinder c-1} immediately follows from the explicit formula of $C'$, \eqref{cyl-inclu-re-1}, and \eqref{eta_0-1}. The lemma is proved.
\end{proof}
The following lemma is derived from the first and second growth lemmas (Proposition ~\ref{growth-1} and  \ref{second-growth-lemma}), and the covering lemma (Lemma \ref{covering-1}).  The proof of the lemma is the most technical one in the paper.
\begin{lemma} \label{alternative-lemma} For $\nu \in (0,1)$,  $\delta \in (0,1)$, and $K_0\in [1,\infty)$, there exist $\bar \epsilon_3 =\bar \epsilon_3(n,\nu, K_0, \delta) \in (0,1)$, $\beta_0= \beta_0(n, \nu, K_0,\delta) \in (0,1)$, and $\kappa = \kappa(n, \nu, K_0)>0$ such that the following assertion holds. Assume that \eqref{elliptic} holds in $C_{2r,\mu}(Y)$ with some $r>0$ and $Y=(y,s) \in \mathbb{R}^{n+1}$,  and assume also that $\omega\in A_{1+\frac{1}{n}}$ satisfies
\begin{equation} \label{omega-cond-3-growth}
 [\omega] _{A_{1+\frac{1}{n}}}\leq K_0 \quad\text{ and} \quad [[\omega]]_{\textup{BMO}(B_{2r}(y), \omega)}\leq \bar \epsilon_3.
\end{equation} 
Then, for every $u \in C^{2,1}(C_{2r,\mu}(Y)) \cap C(\overline{{C}_{2r,\mu}(Y)})$ satisfying 
\begin{equation}\label{density condition}
\mathcal{L} u \leq 0 \quad \text{in} \quad C_{2r,\mu}(Y), \quad \text{and} \quad \mu\big(\{u >0 \} \cap Q \big) \leq \delta \mu(Q),
\end{equation}
we have 
\begin{equation} \label{alternative-conclusion}
\begin{aligned}\left\{
\begin{split}
& \textup{either (i)} \quad \sup_{C_{r,\mu}(Y)} u^+ \leq \beta_0 \sup_{C_{2r, \mu}(Y)} u^+,  \\
& \textup{or \ \quad  (ii)}  \quad \mu\big(  \{u\leq \beta_0M\}\cap Q \big) \geq (1+ \kappa) \mu(\Gamma),
\end{split} \right.
\end{aligned}
\end{equation}
where 
\begin{align*}
& Q=B_{r}(y)\times\big(s- \frac{7}{2}r^2 (\mu)_{B_{2r}(y)}^{1/n},s-3 r^2 (\mu)_{B_{2r}(y)}^{1/n}\big), \\
& M = \displaystyle{\sup_{C_{2r, \mu}(Y)} u^+}, \quad \text{and} \quad \Gamma = \{u \leq 0 \} \cap Q.
\end{align*}
\end{lemma}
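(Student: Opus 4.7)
We establish a Krylov--Safonov dichotomy by combining the Covering Lemma (Lemma \ref{covering-1}) with the First Growth Lemma (Proposition \ref{growth-1}) and Lemma \ref{up-lemma-5}. On each cylinder $C = C_{\rho,\mu}(X_0)$ of a Vitali-type cover of $\Gamma$, the high density of $\{u \leq 0\}$ forces $u$ to be small on $C_{\rho/2,\mu}(X_0)$ via the first growth lemma, and this smallness propagates upward to the hat cylinder $\hat C$ via Lemma \ref{up-lemma-5}. The enlargement supplied by Lemma \ref{inclusion-lemma} ensures the union of hat cylinders $\hat E$ overlaps $Q$ up to a controlled error, producing the measure gain required for (ii).

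\textbf{Setup and covering.} Fix $\kappa_0 = \kappa_0(n,\nu,K_0,\delta) > 0$ to be determined, and invoke Lemma \ref{inclusion-lemma} to obtain an enlarged cylinder $C' \supset Q$ with $\mu(C' \setminus Q) \leq \kappa_0 \mu(Q)$ and a positive distance from $Q$ to the non-bottom part of $\partial C'$. Apply Lemma \ref{covering-1} to $\Gamma$ with parameters $\delta_0 = \delta_0(n, \nu, K_0) \in (0, 1)$ and $K_1 = K_1(n, \nu, K_0) > 1$, restricting the radii of admissible cylinders so that for each $C = C_{\rho,\mu}(X_0) \in \mathcal{A}$ both $\hat C \subset C'$ and $U := B_{2\rho}(x_0) \times (t_0, t_0 + K_1 \rho^2 (\mu)_{B_\rho(x_0)}^{1/n}) \subset C_{2r,\mu}(Y)$; the distance estimate of Lemma \ref{inclusion-lemma} guarantees such a restriction is possible. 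This yields $E$ and $\hat E$ with $\mu(E) \geq q_0 \mu(\Gamma)$ and $\mu(\hat E) \geq q_1 \mu(E)$, where $q_0 > 1$ and $q_1 \in (0,1)$ depend on $n, \nu, K_0, \delta_0, K_1$.

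\textbf{Growth and propagation.} Fix $C \in \mathcal{A}$. Since $\Gamma \subset \{u \leq 0\}$, the density condition gives $\mu(C \cap \{u > 0\}) \leq \delta_0 \mu(C)$, and Proposition \ref{growth-1} yields $\sup_{C_{\rho/2,\mu}(X_0)} u^+ \leq N(\delta_0^{1/(n+1)} + \bar\epsilon_3) M =: \theta M$; choose $\delta_0$ and $\bar\epsilon_3$ (depending on $n, \nu, K_0$) so that $\theta < 1$ uniformly. Continuity of $u$ at $t_0$ gives $u(\cdot, t_0) \leq \theta M$ on $\overline{B_{\rho/2}(x_0)}$. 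Setting $v := u - \theta M$, we have $\mathcal{L} v \leq 0$ on $U$, $v \leq (1-\theta) M$ on $U$, and $v \leq 0$ on $B_{\rho/2}(x_0) \times \{t_0\}$. Applying Lemma \ref{up-lemma-5} (whose BMO hypothesis transfers from ours since $B_{2\rho}(x_0) \subset B_{2r}(y)$) gives $\sup_{\hat C} v^+ \leq \hat\beta_2 (1-\theta) M$, hence $u \leq \beta_0 M$ on $\hat C$ with $\beta_0 := \theta + \hat\beta_2(1-\theta) \in (0, 1)$. Taking the union over $\mathcal{A}$: $u \leq \beta_0 M$ on $\hat E$.

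\textbf{Final measure estimate and main obstacle.} Since $\hat E$ is time-disjoint from $\Gamma$ and $\hat E \subset C'$,
\[
\mu\big(\{u \leq \beta_0 M\} \cap Q\big) \geq \mu(\Gamma) + \mu(\hat E \cap Q) \geq \mu(\Gamma) + q_0 q_1 \mu(\Gamma) - \kappa_0 \mu(Q).
\]
The hypothesis $\mu(\{u > 0\} \cap Q) \leq \delta \mu(Q)$ gives $\mu(Q) \leq \mu(\Gamma)/(1-\delta)$. Setting $\kappa_0 := (1-\delta) q_0 q_1 / 2$ yields (ii) with $\kappa := q_0 q_1 / 2 > 0$ depending only on $n, \nu, K_0$. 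The main obstacle is the parameter chain: $\kappa_0$ depends on $\delta$, then $\eta_0 = \eta_0(n, K_0, \kappa_0)$, then the radius cap on $\mathcal{A}$, while the Covering Lemma must still produce $\mu(E) \geq q_0 \mu(\Gamma)$ under this cap --- verifying this requires revisiting the Krylov--Safonov construction in Appendix \ref{Appendix-A} --- and $\bar\epsilon_3$ must simultaneously satisfy the smallness requirements of both Proposition \ref{growth-1} and Lemma \ref{up-lemma-5}.
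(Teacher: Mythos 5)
Your proposal misses the essential dichotomy that the paper's proof is built around, and it contains a concrete error in the final measure estimate.

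\textbf{The missing dichotomy.} The paper applies Lemma~\ref{covering-1} to the \emph{unrestricted} family $\mathcal{A}$ of all cylinders with $\Gamma$-density at least $1-\delta_0$. It then splits into two cases depending on whether some hat cylinder $\hat C$ pokes out of $C'$. If one does, the geometry of Lemma~\ref{inclusion-lemma} forces the radius of that $C$ to be bounded below by $\eta_1 r$, and Proposition~\ref{second-growth-lemma} (the slant-cylinder growth lemma, which you never invoke) is applied from the time level of that cylinder's top down to $C_{r,\mu}(Y)$, yielding conclusion~(i) directly. Only in the complementary case, where all $\hat C \subset C'$, does one follow the route you describe and deduce conclusion~(ii). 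You instead try to force (ii) always by imposing a radius cap on $\mathcal{A}$; you correctly flag that this requires re-proving Lemma~\ref{covering-1} under such a cap, but you leave that unresolved, and in fact the paper avoids it entirely because the large-radius case feeds the \emph{other} alternative. Without the second growth lemma there is no mechanism to produce conclusion~(i), which the statement requires as an alternative.

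\textbf{The erroneous measure estimate.} You write that ``$\hat E$ is time-disjoint from $\Gamma$'' and conclude $\mu(\{u\leq\beta_0 M\}\cap Q)\geq \mu(\Gamma)+\mu(\hat E\cap Q)$. This is false: each $\hat C$ is disjoint from its own $C$, but $\mathcal{A}$ contains overlapping cylinders at different time levels, so a hat cylinder $\hat C_1$ lying above a low cylinder $C_1$ can intersect another $C_2 \in \mathcal{A}$ and hence overlap $\Gamma$ within $Q$. Because the two sets can overlap, you cannot add their measures. The paper's estimate instead uses only $\mu(\{u\leq\beta_0 M\}\cap Q)\geq\mu(\hat E\cap Q)\geq\mu(\hat E)-\mu(C'\setminus Q)$, which requires the \emph{stronger} covering bound $\mu(\hat E)\geq(1+2\kappa)\mu(\Gamma)$. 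That bound in turn is engineered by choosing $q_0 = 1+3\kappa$ via $\delta_0$ and $K_1 = 5 + 2\kappa^{-1}$ so that $q_0 q_1 = 1+2\kappa$. Your choice $\kappa = q_0 q_1/2$ is not consistent with this structure and in fact, if the disjointness held, would give a spuriously large $\kappa \approx 1/2$ independent of $\delta_0$, which should already alert you that the inequality is not achievable.

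In short: your growth-and-propagation step (Proposition~\ref{growth-1} followed by Lemma~\ref{up-lemma-5}) matches the paper's, but you drop the Case~1 branch based on Proposition~\ref{second-growth-lemma}, replace it with an unjustified radius cap, and then rely on a disjointness that fails.
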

\begin{proof}  We follow the technique introduced by N. V. Krylov and M. V. Safonov, see \cites{KrySa-2, FeSa}. 
By taking a linear translation, we may assume that $Y=(y,s)=(0,0)$. 
In this setting, we have
\begin{align*}
& C_{2r, \mu}(Y) = C_{2r, \mu} =B_{2r} \times(- 4r^2 (\mu)_{B_{2r}}^{1/n}, 0), \\
& Q=B_{r} \times\big(- \frac{7}{2}r^2 (\mu)_{B_{2r}}^{1/n}, -3 r^2 (\mu)_{B_{2r}}^{1/n}\big).
\end{align*}
Observe that the assertion of the proposition is trivial when $M=0$. From now on, we assume $M>0$. By \eqref{density condition}, we also have
\begin{equation}\label{Gamma-measure}
\mu(\Gamma)= \mu(Q)-\mu\big(\{u >0 \} \cap Q \big)\geq (1-\delta)\mu(Q).
\end{equation}
Now, for setting the proof, let $N=N(n,\nu,K_0)>0$ be the number defined in \eqref{growth-1-est} in Proposition~\ref{growth-1}. 
We choose $\epsilon_0=\epsilon_0(n,\nu, K_0)$ and $\delta_0=\delta_0(n,\nu, K_0)\in(0,1)$ to be sufficiently small such that 
\begin{equation} \label{epsilon-06-16}
N\Big(\delta_0^{\frac{1}{n+1}} +\epsilon_0\Big)\leq \frac{1}{2}.
\end{equation}
Note that from the definition of $\Gamma$, it is obvious that $\Gamma$ is bounded. Then, in order to apply the  covering lemma (Lemma \ref{covering-1}),  let  $\A$ be the family of open cylinders defined in \eqref{cylinders set} corresponding to the set $\Gamma$ and the number $\delta_0$. Also, let  $E$
be the set defined in \eqref{set E}. From the construction, we have
\begin{equation} \label{growth-3-E-construc}
\mu(C\cap \Gamma) \geq (1-\delta_0) \mu(C), \quad \forall \ C \in \mathcal{A}, \quad \text{and} \quad E = \bigcup_{C\in \mathcal{A}} C.
\end{equation}
From this and the definition of $\Gamma$, we infer that 
\begin{equation} \label{3rd-growth-A-class}
 \mu\big( C \cap \big\{u>0 \big\} \big) \leq \mu(C\setminus \Gamma) \leq \delta_0 \mu(C), \quad \forall \ C \in \mathcal{A}.
\end{equation}
Also, observe that as $\delta_0$ is sufficiently small and $\Gamma \subset Q$, it follows from the definitions of $\mathcal{A}$ and $Q$ that
\begin{equation} \label{C-inclusion-A-10-02}
C \subset C_{2r, \mu}, \quad \forall \ C \in \mathcal{A}.
\end{equation}
In addition, it follows from Lemma \ref{covering-1} that
\begin{equation}\label{measure est 1}
\mu(E)\geq q_0\mu(\Gamma),\quad \text{where}\quad q_0=1+3^{-(n+1)^2-1}K_0^{-n-1}\delta_0>1.
\end{equation}
Here $q_0 = q_0(n,\nu, K_0)$ because $\delta_0$ depends on $n$,\ $\nu$, and $K_0$.  We then set 
\begin{equation} \label{kappa-choice-3}
\kappa=\kappa(n,\nu,K_0)=3^{-(n+1)^2-2}K_0^{-n-1}\delta_0\in (0,1) \quad \text{so that} \quad  q_0=1+3\kappa. 
\end{equation}
Then, take
\begin{equation}\label{K_1}
K_1=K_1(n,\nu, K_0)=5+2\kappa^{-1}>1.
\end{equation}
With this $K_1$, let $\hat E$ be defined as in \eqref{set hat E}. Then, by \eqref{hat-E-E}, \eqref{measure est 1}, and the definition of $\kappa$ in \eqref{kappa-choice-3}, it follows that
\begin{equation}\label{measure hat E}
\mu(\hat E)\geq q_1\mu(E)\geq q_0q_1\mu(\Gamma)=(1+2\kappa)\mu(\Gamma),
\end{equation}
because
\[
q_1=\frac{K_1-1}{K_1+1}=\frac{1+2\kappa}{1+3\kappa}.
\]

\smallskip
Next, we introduce the constants $\bar{\epsilon}_3$ and $\beta_0$ appearing in the statement of the proposition. Let
\begin{equation}\label{choice-epsilon-3}
\bar \epsilon_3 =\bar{\epsilon}_3(n, \nu, K_0, \delta) = \min\big \{\epsilon_0, \hat\epsilon_2,  \epsilon_2\big\},
\end{equation}
where $\epsilon_0 =\epsilon_0(n, \nu, K_0)$ is defined in \eqref{epsilon-06-16}, $\hat\epsilon_2=\hat\epsilon_2(n, \nu, K_0, K_1)$ is the constant defined in Lemma \ref{up-lemma-5}, and $\epsilon_2 = \epsilon_2(n, \nu, K_0, 4\eta_1^{-1}, 1)$ is the constant defined in Proposition \ref{second-growth-lemma}. Here, note that $K_1=K_1(n,\nu, K_0)$ is defined in \eqref{K_1} and $\eta_1 = \eta_1(n, \nu, K_0, \delta)$ is the number defined in \eqref{eta-1} below. We also set
\begin{equation} \label{choice-beta-epsilon-2}
\beta_0=\beta_0(n,\nu, K_0, \delta)=\max\Big\{\frac{1+\hat \beta_2}{2}, \frac{1+\beta_2}{2}\Big\}\in (0,1),
\end{equation}
where $\hat \beta_2=\hat\beta_2(n,\nu, K_0, K_1)$ is the constant defined in Lemma \ref{up-lemma-5}, and $\beta_2=\beta_2(n,\nu, K_0, 4\eta_1^{-1}, 1)\in (0,1)$ is defined in Proposition \ref{second-growth-lemma}.

\smallskip
We now prove \eqref{alternative-conclusion} with the choices of $\kappa,  \bar \epsilon_3, \beta_0$ defined in \eqref{kappa-choice-3}, \eqref{choice-epsilon-3}, and \eqref{choice-beta-epsilon-2}. We first note that by \eqref{3rd-growth-A-class}, \eqref{C-inclusion-A-10-02}, and the choice of $\bar \epsilon_3$, it follows from Proposition \ref{growth-1} that
\begin{equation}  \label{C-A-prop-growth}
\sup_{C_{\theta/2, \mu}(X)} u^+ \leq \frac{1}{2} \sup_{C_{\theta, \mu}(X)} u \leq \frac{M}{2}, \quad \forall \ C_{\theta, \mu}(X) \in \mathcal{A}.
\end{equation}
Then, for such $C=C_{\theta, \mu}(X)$ in \eqref{C-A-prop-growth}, if $\hat{C} \subset U\subset C_{2r, \mu}$,  it follows from \eqref{omega-cond-3-growth},  \eqref{choice-epsilon-3}, \eqref{C-A-prop-growth}, and Lemma \ref{up-lemma-5} that
\[
\sup_{\hat{C}} \big( u -\frac{M}{2}\big)  \leq \hat{\beta}_2 \sup_{C_{2r,\mu}}(u-\frac{M}{2})\leq \frac{\hat \beta_2}{2}M,
\]
where $\hat{C}$ is the corresponding cylinder of $C$ defined in \eqref{hat cylinder} and $U= U_{\theta, \mu}(X)$ is defined in \eqref{cylinder U}. From this, \eqref{choice-beta-epsilon-2},  we obtain the following important estimate on the growth of the solution $u$
\begin{equation} \label{hat-C-growth}
\sup_{\hat{C}} u \leq \beta_0 M, \quad \text{if} \quad \ C \in \mathcal{A}:    \ \hat{C} \subset U\subset C_{2r, \mu}.
\end{equation}

\smallskip
Next, for the given $Q$ in the statement of the lemma, let 
\begin{equation} \label{kappa-eta-con-def-0905}
\kappa_0 =\kappa(1-\delta) \in (0,1) \quad \text{and} \quad \eta_0=\eta_0(n, \nu, K_0, \delta)>0,
\end{equation}
where $\kappa=\kappa(n,\nu, K_0)$ is defined in \eqref{kappa-choice-3}, and $\eta_0(n, \nu, K_0, \delta)$ is defined in \eqref{eta_0-1}. By Lemma \ref{inclusion-lemma}, we see that the cylinder 
\begin{equation} \label{C-prime-def-0905}
C'=B_{\tilde r}\times \big( -\frac{7}{2}r^2(\mu)_{B_{2r}}^{1/n},\big(-3+\frac{\kappa_0}{4}\big)r^2(\mu)_{B_{2r}}^{1/n}\big)
\end{equation}
satisfies
\begin{equation}\label{cylinder c-1}
Q \subset C', \quad \mu(C')\leq \big(1+\kappa_0\big)\mu(Q), \quad \tilde r \geq (1+\eta_0) r,
\end{equation}
and
\[
\textup{dist}\Big(Q, \partial C' \setminus \big(B_{\tilde r}\times \big\{ -\frac{7}{2} r^2(\mu)_{B_{2r}}^{1/n}\big\}\big)\Big)  \geq  \min\Big\{\eta_0r, \frac{\kappa_0}{4}r^2(\mu)_{B_{2r}}^{1/n}\Big\}.
\]
Note that we can assume without loss of generality that $C' \subset C_{2r, \mu}$ because we can choose $\delta_0$ small enough so that $\kappa$ defined in \eqref{kappa-choice-3} is sufficiently small, and this makes $\kappa_0$ sufficiently small.

\smallskip
The proof is then split into the following two cases based on the relation between $\hat{E}$ and $C'$.

\smallskip
\noindent  
\textbf{Case 1: $\mu(\hat E\setminus C')>0$.}  Because of this, and by the definition of $\hat E$,  we can find a cylinder $C= C_{r_0,\mu}(Y_0) \in \mathcal{A}$ for some $r_0>0$ and $Y_0 =(y_0, s_0) \in \mathbb{R}^{n+1}$ such that 
\begin{equation} \label{hat-C-out-C-1}
\hat C \setminus C'\neq \emptyset, 
\end{equation}
where
\[
\hat C = \hat C_{r_0,\mu}(Y_0)=B_{r_0}(y_0)\times \big(s_0+r_0^2(\mu)^{1/n}_{B_{r_0}(y_0)}, s_0+K_1r_0^2(\mu)^{1/n}_{B_{r_0}(y_0)}\big)
\]
is defined in \eqref{hat cylinder}. Based on \eqref{hat-C-out-C-1} and the definition of $C'$ in \eqref{C-prime-def-0905}, we infer that the cylinder $C$ is not too small. Specifically, we claim that
\begin{equation} \label{eta-1}
 r_0 \geq \eta_1 r \quad \text{for} \quad \eta_1= \min\Big\{\frac{\eta_0}{2},  \frac{\kappa(1-\delta)}{8(1+K_1)},  \frac{1}{2}\Big\}.
\end{equation}
Here, it is important to recall that $\kappa$ is defined in \eqref{kappa-choice-3}, $K_1$ is defined in \eqref{K_1}, and $\eta_0$ is defined in \eqref{kappa-eta-con-def-0905}. As such, $\eta_1 = \eta_1(n, \nu, K_0, \delta) >0$.

\smallskip
To verify \eqref{eta-1}, we only need to consider the case $r_0 \leq r/2$, as otherwise \eqref{eta-1} is trivial. We recall that $\Gamma\subset Q$, and by \eqref{growth-3-E-construc}, we also have $\mu(C\cap\Gamma)>0$. This implies that $B_{r_0}(y_0) \cap B_r \not= \emptyset$ and
\[
 \big(s_0- r_0^2(\mu)^{1/n}_{B_{r_0}(y_0)}, s_0\big) \cap  \big(- \frac{7}{2}r^2 (\mu)_{B_{2r}}^{1/n}, -3 r^2 (\mu)_{B_{2r}}^{1/n}\big) \not= \emptyset.
\]
From this, and since $r_0\leq r/2$, we have 
\begin{equation}\label{intersect}
B_{r_0}(y_0)\subset B_{2r} \quad \text{and} \quad -\frac{7}{2}r^2(\mu)^{1/n}_{B_{2r}}<s_0<-3r^2(\mu)^{1/n}_{B_{2r}}+r_0^2(\mu)_{B_{r_0}(y_0)}^{1/n}.
\end{equation}
Now, from \eqref{hat-C-out-C-1}, we note that either $B_{r_0}(y_0) \setminus B_{\tilde{r}} \not=\emptyset$ or the top of $\hat{C}$ must be above the top of $C'$. Because of this,  and by \eqref{intersect}, and the definition of $C'$ in \eqref{C-prime-def-0905}, we infer that
\begin{equation}\label{radius-2}
\text{either} \quad 2r_0\geq \eta_0r \quad \text{or} \quad (1+K_1)r_0^2(\mu)^{1/n}_{B_{r_0}(y_0)}\geq \frac{\kappa_0}{4}r^2
(\mu)_{B_{2r}}^{1/n}.
\end{equation}
Note that if the former in \eqref{radius-2} holds, then \eqref{eta-1} follows.  On the other hand, if the latter condition in \eqref{radius-2} holds, then by combining this with the fact that $\mu(B_{r_0}(y_0)) \leq \mu(B_{2r})$ due to \eqref{intersect}, and the fact that $\kappa_0=\kappa(1-\delta)$, we obtain
\[
r_0\geq \frac{\kappa(1-\delta)}{8(1+K_1)}r,
\]
which also implies  \eqref{eta-1}.  Hence, in both cases, we both have \eqref{eta-1}.

\smallskip
On the other hand, from \eqref{C-inclusion-A-10-02}, the definition of $\mathcal{A}$, we can translate $C_{r_0,\mu}(Y_0)$ in time direction if needed and assume that
\begin{equation} \label{06-17-1}
 s_0 \leq -3r^2(\mu)_{B_{2r}}^{1/n}.
\end{equation}
Also, it follows from \eqref{C-A-prop-growth} that
\begin{equation}  \label{case-1 u}
\sup_{B_{r_0/2}(y_0)}u(\cdot, s_0)\leq\sup_{C_{r_0/2,\mu}(Y_0)}u\leq \frac{1}{2}\sup_{C_{r_0,\mu}( Y_0)}u\leq \frac{1}{2}M.
\end{equation}
Then, from the choice of $\bar \epsilon_3$ in \eqref{choice-epsilon-3}, along with \eqref{06-17-1} and \eqref{case-1 u}, we apply Proposition \ref{second-growth-lemma} to $u-\frac{1}{2}M$ in $C_{2r, \mu}$ with $h=1$, $\tau=s_0$, and $\rho=\frac{1}{2}\eta_1r\leq \frac{1}{4}r$. Noting that  condition \eqref{tau-condition} holds for $\tau$ defined in \eqref{06-17-1}, we obtain
\[
\sup_{C_{r,\mu}}(u-\frac{1}{2}M)\leq \beta_2 \sup_{C_{2r,\mu}}(u-\frac{1}{2}M)\leq \frac{\beta_2}{2}M,
\]
where $\beta_2=\beta_2(n,\nu, K_0, 4\eta_1^{-1}, 1)\in (0,1)$ is the number defined in Proposition \ref{second-growth-lemma}.  It then follows from this and the choice of $\beta_0$ in \eqref{choice-beta-epsilon-2} that
\[
\sup_{C_{r,\mu}}u \leq \beta_0 M.
\]
Thus, we obtain (i)  in \eqref{alternative-conclusion}. 
\\ \ \\
\noindent
\textbf{Case 2: $\mu(\hat E\setminus C')=0$.} This implies $\hat E\subset C' \subset C_{2r, \mu}$. Because of this, and due to the proof in {\bf Case 1}, we can assume without loss of generality that every $C \in \mathcal{A}$ has a radius smaller than $r/4$. Hence, for any arbitrary cylinder $C_{\rho,\mu}(X_0)\in\A$ with $X_0=(x_0,t_0)\in \R^{n+1}$ and $\rho>0$, we have $\rho \in (0, r/4)$ and
\[ 
\hat C_{\rho,\mu}(X_0)  \subset U \subset C_{2r,\mu},
\]
where
\[
\hat C_{\rho,\mu}(X_0)=B_{\rho}(x_0)\times \big(t_0+{\rho}^2(\mu)_{B_{\rho}(x_0)}^{1/n}, t_0+K_1\rho^2(\mu)_{B_{\rho}(x_0)}^{1/n}\big),
\] 
and $U = U_{\rho, \mu}(X_0)$ is the cylinder corresponding to $\hat C_{\rho,\mu}(X_0)$ that we defined in \eqref{cylinder U}.
Then, by \eqref{hat-C-growth}, we have 
\[ 
\sup_{\hat C_{\rho,\mu}(X_0)} u^+ \leq \beta_0 M.
\]
Therefore,
\[
\sup_{\hat{E}} u^+ \leq \beta_0 M.
\]
Let us denote $\hat\Gamma =\hat E\cap Q$. By \eqref{Gamma-measure}, \eqref{measure hat E}, \eqref{kappa-eta-con-def-0905}, and \eqref{cylinder c-1}, we have
\begin{equation*}\label{case-2 Gamma}
\begin{aligned}
\mu(\hat\Gamma) &=\mu(\hat E\cap Q) \geq \mu(\hat E)-\mu(C'\setminus Q)\\
&\geq (1+2\kappa)\mu(\Gamma)-\kappa(1-\delta)\mu(Q)\\
&\geq  (1+2\kappa)\mu(\Gamma)-\kappa\mu(\Gamma)\\
&=(1+\kappa)\mu(\Gamma).
\end{aligned}
\end{equation*}
It then follows from the last two estimates that
\[ 
\mu\big(\{u\leq \beta_0M\}\cap Q\big)\geq \mu(\hat\Gamma)\geq (1+\kappa)\mu(\Gamma).
\] 
From this, we obtain (ii) of \eqref{alternative-conclusion}. 
The proof is completed.
\end{proof}
The following result, which is known as the third growth lemma, is the main result of the subsection.
\begin{proposition}[Third Growth Lemma] \label{third-growth} For $\nu \in (0,1)$, $\delta \in (0, 1)$, and $K_0\in [1,\infty)$, there exist $\epsilon_3=\epsilon_3(n,\nu, K_0, \delta) \in (0,1)$ and $\beta_3= \beta_3(n, \nu, K_0, \delta) \in (0,1)$ such that the following assertion holds. Assume that \eqref{elliptic} holds in $C_{2r,\mu}(Y)$ with some $r>0$ and $Y=(y,s) \in \mathbb{R}^{n+1}$,  and assume also that $\omega\in A_{1+\frac{1}{n}}$ satisfies
\begin{equation} \label{omega-cond-3-growth-06-16}
 [\omega] _{A_{1+\frac{1}{n}}}\leq K_0 \quad\text{ and} \quad [[\omega]]_{\textup{BMO}(B_{2r}(y), \omega)}\leq \epsilon_3.
\end{equation} 
Then, for every $u \in C^{2,1}(C_{2r,\mu}(Y)) \cap C(\overline{{C}_{2r,\mu}(Y)})$ satisfying 
\begin{equation}\label{density condition - grow3}
\mathcal{L} u \leq 0 \quad \text{in} \quad C_{2r,\mu}(Y), \quad \text{and} \quad \mu\big(\{u >0 \} \cap Q \big) \leq \delta \mu(Q),
\end{equation}
where 
\[ 
 Q=B_{r}(y)\times\big(s- \frac{7}{2}r^2 (\mu)_{B_{2r}(y)}^{1/n},s-3 r^2 (\mu)_{B_{2r}(y)}^{1/n}\big),
\]
we have
\begin{equation} \label{growth-3-conclusion}
\sup_{C_{r, \mu}(Y)} u^+ \leq \beta_3 \sup_{C_{2r, \mu}(Y)} u^+.
\end{equation}
\end{proposition}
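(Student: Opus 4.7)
The plan is to derive Proposition~\ref{third-growth} by iterating Lemma~\ref{alternative-lemma}. To begin, I would set $M := \sup_{C_{2r,\mu}(Y)} u^+$ and, disposing of the trivial case $M=0$, normalize by $\tilde u_0 := u/M$, so that $\tilde u_0 \leq 1$ in $C_{2r,\mu}(Y)$, $\mathcal{L}\tilde u_0 \leq 0$, and $\mu(\{\tilde u_0 > 0\} \cap Q) \leq \delta\,\mu(Q)$. I would then fix $\epsilon_3 := \bar\epsilon_3(n,\nu,K_0,\delta)$ from Lemma~\ref{alternative-lemma}, and denote by $\beta_0 = \beta_0(n,\nu,K_0,\delta) \in (0,1)$ and $\kappa = \kappa(n,\nu,K_0) > 0$ the companion constants.

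Next, I would construct a sequence $\{\tilde u_k\}$ recursively by the linear rescaling
\[
\tilde u_{k+1} := \frac{\tilde u_k - \beta_0}{1-\beta_0},
\]
defined as long as alternative (ii) of Lemma~\ref{alternative-lemma} holds for $\tilde u_k$. Routine checks show that each $\tilde u_{k+1}$ is a $C^{2,1}$ subsolution on $C_{2r,\mu}(Y)$ with $\tilde u_{k+1} \leq 1$ (since $\tilde u_k \leq 1$) and, crucially, that the measure condition is preserved:
\[
\mu\bigl(\{\tilde u_{k+1} > 0\}\cap Q\bigr) = \mu\bigl(\{\tilde u_k > \beta_0\}\cap Q\bigr) \leq \mu(Q)-(1+\kappa)\mu(\Gamma_k) \leq \bigl[\delta-\kappa(1-\delta)\bigr]\mu(Q) \leq \delta\,\mu(Q),
\]
using alternative (ii) and the inductive estimate $\mu(\Gamma_k) \geq (1-\delta)\mu(Q)$ where $\Gamma_k := \{\tilde u_k \leq 0\}\cap Q$.

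The engine of the proof is the observation that alternative (ii) at step $k$ gives $\mu(\Gamma_{k+1}) \geq (1+\kappa)\mu(\Gamma_k)$, hence $\mu(\Gamma_k) \geq (1+\kappa)^k(1-\delta)\mu(Q)$. Since $\mu(\Gamma_k) \leq \mu(Q)$, alternative (ii) can persist only for $k \leq k_0 := \lceil \log(1/(1-\delta))/\log(1+\kappa)\rceil$, so there is a smallest $K \leq k_0+1$ at which alternative (i) holds, yielding $\sup_{C_{r,\mu}(Y)} \tilde u_K \leq \beta_0$. Unwinding the recursion $\tilde u_{j-1} = (1-\beta_0)\tilde u_j + \beta_0$ by induction, one obtains
\[
\tilde u_0 \leq 1-(1-\beta_0)^{K+1} \leq 1-(1-\beta_0)^{k_0+2} \quad \text{on } C_{r,\mu}(Y),
\]
so setting $\beta_3 := 1-(1-\beta_0)^{k_0+2} \in (0,1)$ gives the desired bound $\sup_{C_{r,\mu}(Y)} u^+ \leq \beta_3 M$, with $\beta_3$ depending only on $n,\nu,K_0,\delta$.

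The principal technical difficulty is already absorbed into Lemma~\ref{alternative-lemma}, which packages the covering lemma, first and second growth lemmas, and the slant-cylinder estimates. What remains here is essentially bookkeeping: confirming that the linear transformation $\tilde u_k \mapsto \tilde u_{k+1}$ preserves all hypotheses of the alternative lemma so that the \emph{same} constants $\epsilon_3$, $\beta_0$, $\kappa$ can be reused at every step, and that the number of admissible iterations is quantitatively bounded solely in terms of $\delta$ and $\kappa$. The only subtle point is making sure $\delta$ does not need to be replaced by a larger value along the iteration, which is guaranteed by the elementary inequality $\delta - \kappa(1-\delta) \leq \delta$.
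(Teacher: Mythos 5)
Your proposal is correct and follows essentially the same strategy as the paper: fix $\epsilon_3 = \bar\epsilon_3$, iterate Lemma~\ref{alternative-lemma}, use the geometric growth of $\mu(\Gamma_k)$ under alternative (ii) together with $\mu(\Gamma_k)\leq\mu(Q)$ to bound the number of steps, and unwind the affine recursion to get $\beta_3 = 1-(1-\beta_0)^{k_0+2}$. The only cosmetic differences are that you normalize by $M$ at the outset whereas the paper works with $u_k = u - [1-(1-\beta_0)^k]M$ directly, and that your verification of the density hypothesis at step $k+1$ is more elaborate than needed: since $\beta_0>0$, one has the immediate inclusion $\{\tilde u_{k+1}>0\} = \{\tilde u_k > \beta_0\}\subset\{\tilde u_k > 0\}$, so $\mu(\{\tilde u_{k+1}>0\}\cap Q)\leq\delta\mu(Q)$ follows without invoking alternative (ii) or the inductive lower bound on $\mu(\Gamma_k)$ (which is in fact what the paper implicitly does). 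Your longer chain of inequalities is still correct, just not necessary.
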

\begin{proof}  Let $\epsilon_3 =\bar{\epsilon}_3$, where $\bar{\epsilon}_3$ is the number defined in Lemma \ref{alternative-lemma} and we prove the proposition with this choice of $\epsilon_3$. We apply Lemma \ref{alternative-lemma} iteratively to obtain \eqref{growth-3-conclusion}. By Lemma \ref{alternative-lemma}, there exist $\beta_0=\beta_0(n,\nu, K_0, \delta)\in (0,1)$ and $\kappa=\kappa(n,\nu, K_0)>0$ such that
\begin{equation}
\begin{aligned}\label{iteration-u-conclude}
\left\{
\begin{split}
& \textup{either (i)} \quad  u \leq \beta_0 M \quad \text{in}\quad C_{r,\mu}(Y), \\
& \textup{or  \ \quad (ii)}  \quad \mu\big(  \Gamma_1 \big) \geq (1+ \kappa) \mu(\Gamma),
\end{split} \right.
\end{aligned} 
\end{equation}
where
\[
M = \sup_{C_{2r, \mu}(Y)} u^+, \quad \Gamma_1 = \{u\leq \beta_0M\}\cap Q, \quad \text{and} \quad \Gamma =\{ u \leq 0 \} \cap Q.
\]
If (i) in \eqref{iteration-u-conclude} holds true, then we take $\beta_3 =\beta_0$ and obtain \eqref{growth-3-conclusion}, and the proof of the proposition is completed. Otherwise, when (ii) in \eqref{iteration-u-conclude} holds true, we define 
\[  
u_1=u-\beta_0M\quad \text{in}\quad \overline{C_{2r,\mu}(Y)}.
\]
Because of \eqref{omega-cond-3-growth-06-16} and \eqref{density condition - grow3}, we see that $u_1$ satisfies the conditions in Lemma \ref{alternative-lemma}. Then, by applying Lemma \ref{alternative-lemma} for $u_1$, we infer that
\begin{equation} \label{step-2-iteration}
\left\{
\begin{split}
& \text{either (i-1)} \quad u_1\leq \beta_0M_1 \quad \text{in}\quad C_{r,\mu}(Y), \\
& \text{or \ \quad (ii-1)} \quad \mu\big(\Gamma_2 \big) \geq (1+\kappa)\mu(\Gamma_1),
\end{split} \right.
\end{equation}
where
\[
\Gamma_2 = \{u_1\leq \beta_0M_1\} \cap Q \quad \text{and} \quad M_1 = \sup_{C_{2r,\mu}(Y)} u_1 = (1-\beta_0)M.
\]
Again, if (i-1) in \eqref{step-2-iteration} holds, 
we obtain 
\[
u  \leq \big[1- (1-\beta_0)^2\big]M \quad \text{in} \quad C_{r,\mu}(Y).
\]
Then \eqref{growth-3-conclusion} follows, and the process is completed. Otherwise, when (ii-1)  in \eqref{step-2-iteration} holds. As (ii) in  \eqref{iteration-u-conclude} also holds, we get
\[
\mu(\Gamma_2) \geq (1+\kappa)^2 \mu(\Gamma).
\]
Then, we set
\[ u_2 = u_1 -\beta_0 M_1 = u - \big[1- (1-\beta_0)^2\big]M,
\]
and
\[  
\Gamma_3 =  \{u_2\leq \beta_0M_2\} \cap Q, \quad \text{and} \quad M_2= \sup_{C_{2r,\mu}(Y)} u_2.
\]
Assume that we can keep doing this up to some step $m \in \mathbb{N}$. Then, we get the sequences $\{u_k\}_{k=1}^m$ and $\{\Gamma_k\}_{k=1}^{m+1}$, defined as
\begin{equation}\label{induction-1}
u_k = u - \big[1- (1-\beta_0)^k \big] M \quad \text{and} \quad \Gamma_{k+1} = \{u_k \leq \beta_0M_k\} \cap Q
\end{equation}
with  $M_k =\displaystyle{\sup_{C_{2r, \mu}(Y)}} u_k$ for $k=1,2,\ldots, m$. Moreover, we also have a series of alternatives 
\begin{equation} \label{iteration-k-step}
\left\{
\begin{split}
& \text{either (i-k)} \quad u_k \leq \beta_0 M_k \quad \text{in} \quad C_{r, \mu}(Y), \\
& \text{or \ \quad (ii-k)} \quad \mu(\Gamma_{k+1}) \geq (1+\kappa)^{k+1} \mu(\Gamma), 
\end{split} \right.
\end{equation}
in which neither $\textup{(i-k)}$ hold but all $\textup{(ii-k)}$ hold for $k = 1, 2,\ldots, m-1$. We note $\Gamma_{k} \subset Q$ holds for all $k$. Moreover, from  (ii-k) in \eqref{iteration-k-step}, it follows that
\[
k +1 \leq \log_{1+\kappa} \big[\mu(Q)/ \mu(\Gamma) \big] \leq -\log_{1+\kappa}(1-\delta),
\]
where we used \eqref{Gamma-measure} in the last step. Therefore, the process must stop at some $k=m \leq k_0$, where $k_0$ is the largest natural number satisfying
\[
k_0 \leq-\log_{1+\kappa}(1-\delta).
\]
This means that (i-k) in \eqref{iteration-k-step} holds when $k=m$. We note that from the definition of $u_m$ in \eqref{induction-1}, we infer that
\begin{align*}
u \leq [1-(1-\beta_0)^{m+1}] M\leq [1-(1-\beta_0)^{k_0+1}] M\quad \text{in} \quad C_{r, \mu}.
\end{align*}
This implies \eqref{growth-3-conclusion} with 
\[
\beta_3(n,\nu, K_0, \delta) = 1-(1-\beta_0)^{k_0+1} \in (0,1),
\]
and the proof of the proposition is completed.
\end{proof}
To conclude the subsection, we state and prove the following result, which is a simple corollary of Proposition \ref{third-growth}.
\begin{corollary}\label{Growth corollary} Assume as Proposition \textup{\ref{third-growth}}. Then, for all $u \in C^{2,1}(C_{2r,\mu}(Y)) \cap C(\overline{{C}_{2r,\mu}(Y)})$ satisfying $u\geq 0$,\ $\mathcal{L} u \geq 0$ on $C_{2r,\mu}(Y)$, and
\begin{equation} \label{corol-0907-cond}
\mu\big(\{u \geq1 \} \cap Q \big) \geq (1-\delta) \mu(Q)
\end{equation}
with $ 
 Q=B_{r}(y)\times\big(s- \frac{7}{2}r^2 (\mu)_{B_{2r}(y)}^{1/n},s-3 r^2 (\mu)_{B_{2r}(y)}^{1/n}\big)$, it holds that
\[
\inf_{C_{r, \mu}(Y)}u\geq 1-\beta_3>0.
\]
\end{corollary}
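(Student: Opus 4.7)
The plan is to reduce the statement to Proposition~\ref{third-growth} by switching to the auxiliary function $v = 1-u$. First, I would observe that $v \in C^{2,1}(C_{2r,\mu}(Y)) \cap C(\overline{C_{2r,\mu}(Y)})$ inherits regularity from $u$, that $\mathcal{L}v = -\mathcal{L}u \leq 0$ in $C_{2r,\mu}(Y)$ since $\mathcal{L}u \geq 0$, and that $v \leq 1$ throughout $C_{2r,\mu}(Y)$ because $u \geq 0$. In particular,
\[
\sup_{C_{2r,\mu}(Y)} v^{+} \leq 1.
\]

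Next, I would rewrite the density hypothesis \eqref{corol-0907-cond} in terms of $v$. Since $\{u \geq 1\} = \{v \leq 0\}$, the assumption $\mu(\{u \geq 1\} \cap Q) \geq (1-\delta)\mu(Q)$ becomes $\mu(\{v \leq 0\} \cap Q) \geq (1-\delta)\mu(Q)$, and taking the complement inside $Q$ yields
\[
\mu\bigl(\{v > 0\} \cap Q\bigr) \leq \delta\, \mu(Q).
\]
Thus $v$ satisfies all structural hypotheses required by Proposition~\ref{third-growth}, and the weight conditions \eqref{omega-cond-3-growth-06-16} hold by the assumption that the hypotheses of Proposition~\ref{third-growth} are in force.

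Applying Proposition~\ref{third-growth} to $v$ (with the same choice of $\epsilon_3$ and $\beta_3$) then delivers
\[
\sup_{C_{r,\mu}(Y)} v^{+} \leq \beta_3 \sup_{C_{2r,\mu}(Y)} v^{+} \leq \beta_3.
\]
Unpacking the definition of $v$, this means $1 - u(X) \leq \beta_3$ for every $X \in C_{r,\mu}(Y)$, so that $u(X) \geq 1 - \beta_3$ pointwise there. Taking the infimum yields the desired bound, and positivity follows from $\beta_3 \in (0,1)$ as supplied by Proposition~\ref{third-growth}. There is essentially no obstacle in this argument; the only point worth double-checking is the matching of the constants $(\epsilon_3, \beta_3)$, which is automatic since the corollary invokes precisely the same hypotheses as the proposition.
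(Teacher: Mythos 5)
Your proposal is correct and takes the same route as the paper: set $v = 1-u$, verify $v$ satisfies the hypotheses of Proposition~\ref{third-growth}, and translate the resulting estimate $\sup_{C_{r,\mu}(Y)} v^+ \leq \beta_3 \sup_{C_{2r,\mu}(Y)} v^+ \leq \beta_3$ back into the claimed lower bound for $u$. The paper writes the final step in the slightly sharper form $\inf_{C_{r,\mu}(Y)} u \geq 1-\beta_3 + \beta_3\inf_{C_{2r,\mu}(Y)} u$, but this refinement is not needed for the stated conclusion.
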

\begin{proof}
Set $v=1-u$, then $v$ satisfies all the conditions in Proposition \ref{third-growth}. Therefore,
\[
\sup_{C_{r, \mu}(Y)} v \leq \beta_3 \sup_{C_{2r, \mu}(Y)} v.
\]
Hence,
\[
\inf_{C_{r, \mu}(Y)}u\geq 1-\beta_3+\beta_3\inf_{C_{2r, \mu}(Y)} u\geq 1-\beta_3>0.
\]
\end{proof}
\section{Proofs of the main results} \label{proof-section}
\subsection{Proof of Theorem~\protect{\ref{Harnack inequality}}} It is N. V. Krylov and M. V. Safonov \cite{KrySa-2} who first proved  
the Harnack inequality for the solutions to second order linear equations in non-divergence form with uniform elliptic and bounded coefficients,  using some ideas originated from E. M. Landis \cite{E.M.Landis}. We follow the arguments presented in \cite{FeSa}, which rely on a distance function, Corollary \ref{Growth corollary}, and Corollary \ref{prop-up-lemma}.

\begin{proof} For settings, we let
\begin{equation} \label{gamma-def-0908}
\gamma = \gamma(n, \nu, K_0, \frac{1}{16})>0
\end{equation}
be the constant defined in Corollary~\ref{prop-up-lemma}. Also, let  $\delta_0=\delta_0(n,\nu,K_0) \in (0,1)$ and $\epsilon_0=\epsilon_0(n,\nu,K_0)\in (0,1)$ be sufficiently small  so that  
 \begin{equation}\label{epsilon using 1}
 NK_0\delta_0^{\frac{1}{n+1}} \le  2^{-\gamma-2}
 \quad \text{and}\quad 
 NK_0\epsilon_0 \le 2^{-\gamma-2},
 \end{equation}
where $N=N(n,\nu)>0$ is the constant defined in the claim \eqref{claim-1} in the proof of Proposition~\ref{growth-1}. In addition, let 
\begin{equation}\label{def of delta-1}
\delta_1(n, \nu, K_0) = 4^{-n(n+1)} 8^{-n-1} K_0^{-n-1} \delta_0 \in (0,1),
\end{equation}
and let $\overline{\epsilon}_0 =\overline{\epsilon}_0(n, \nu, K_0)>0$, $\bar \epsilon_2 =\bar \epsilon_2(n,\nu, K_0, \frac{1}{16}) > 0$ and $\epsilon_3 = \epsilon_3(n,\nu, K_0, 1-\delta_1) > 0$  respectively be the constants defined in Corollary~\ref{cor-1}, Corollary~\ref{prop-up-lemma} and Proposition~\ref{third-growth}. Now, we define
\begin{equation}\label{choice of epsilon}
\epsilon=\epsilon(n,\nu, K_0)
=\min \{\epsilon_0, \overline{\epsilon}_0, \bar \epsilon_2, \epsilon_3 \} >0,
\end{equation}
and we prove the assertion of the theorem with this choice of $\epsilon$.

\smallskip
By performing a linear translation, we can assume  $Y=(0,0)$.  As in the proof of Corollary \ref{cor-1}, we define 
\begin{align*}
d(X) = 
\begin{cases} 
\sup \left\{ \rho > 0 : C_{\rho, \mu}(X) \subset C_{2r, \mu} \right\} & \text{for } X = (x, t) \in C_{2r, \mu}, \\
0 & \text{for } X=(x,t) \in \partial_p C_{2r, \mu}.
\end{cases}
\end{align*}
Also, recall that 
\[
U_1 =B_{r}\times
\big( -3r^2(\mu)_{B_{2r}}^{1/n}, -2r^2(\mu)_{B_{2r}}^{1/n} \big), 
\]
and by \eqref{cylinder measure formula}
\[
\left(\frac{r}{2}\right)^2(\mu)_{B_{r/2}(x)}^{1/n}\leq r^2(\mu)_{B_{2r}}^{1/n}\qquad \text{for all} \quad x \in B_{r}.
\] 
We then see that  
\begin{equation}\label{dist in C^1}
\frac{r}{2}\leq d(X) \leq  2r \quad \text {for all}\quad X\in \overline{U_1}.
\end{equation}
We denote the lower half cylinder of $C_{2r,\mu}$ by
\[
Q^0=B_{2r}\times\big(-4r^2(\mu)_{B_{2r}}^{1/n},-2r^2(\mu)_{B_{2r}}^{1/n}\big),
\]
and
\[
M=\sup_{X\in Q^0}\big(d^{\gamma}(X)u(X)\big) \geq 0.
\]
Here, the same as in \eqref{epsilon using 1}, $\gamma >0$ is the constant defined in \eqref{gamma-def-0908} which comes from the constant defined in Corollary~\ref{prop-up-lemma}.  

\smallskip
From now on, we assume that $M>0$, as otherwise, it follows from the maximum principle that $u=0$ in $C_{2r, \mu}$ and then \eqref{Harnack} is trivial.  Because $M>0$ and because of the continuities of $u$ and $d$, we can find $X_0=(x_0, t_0) \in  \overline{Q^0}$ such that 
\[ M=d^{\gamma}(X_0)u(X_0) \quad \text{and} \quad d^{\gamma}(X_0)>0.
\]
Because $U_1 \subset Q^0$, it follows from the definition of $M$ and \eqref{dist in C^1} that
\begin{equation} \label{sup u-part 1}
\sup_{U_1}u\leq 2^{\gamma}r^{-\gamma} M =\Big(\frac{2d(X_0)}{r}\Big)^{\gamma}u(X_0).
\end{equation} 
Next, let us denote
\begin{align*}
& \rho_0=\frac{d(X_0)}{32} \in(0,\frac{1}{16}r] , \quad
C_0 =  C_{\rho_0,\mu}(X_0), \quad \text{and} \\
& \Gamma_0 = \Big\{ u > \frac{1}{2} u(X_0) \Big\} \cap C_0 .
\end{align*}
We observe that $C_0 \subset C_{16\rho_0, \mu}(X_0)$. Then, for any $Z \in C_0$, it follows from Lemma \ref{cylinder-inclusion} that
\[ 
C_{16\rho_0, \mu}(Z) \subset C_{32\rho_0, \mu}(X_0) \subset C_{2r, \mu},
\]
which implies $d(Z) \geq 16\rho_0 = d(X_0)/2$. From this and as $C_0 \subset Q^0$, it follows that
\begin{equation}\label{u-c-0}
\sup_{C_0} u \leq 2^{\gamma} u(X_0).
\end{equation}
Given the choice of $\epsilon$ in \eqref{choice of epsilon}, we apply the contrapositive of claim \eqref{claim-1} to $u - u(X_0)/2$ in $C_0$, which yields the inequality
\begin{equation}\label{v measure-1}
\mu (\Gamma_0) \geq \delta_0 \mu(C_0),
\end{equation}
where $\delta_0 = \delta_0(n, \nu, K_0)\in (0,1)$ is defined in \eqref{epsilon using 1}. Indeed, if \eqref{v measure-1} was not true, then from \eqref{claim-1}, \eqref{epsilon using 1}, and \eqref{sup u-part 1}, we would infer that
\[
\frac{u(X_0)}{2} \le 2^{-\gamma-1} \sup_{C_0} \Big(u - \frac{u(X_0)}{2}\Big) < 2^{-\gamma-1} \sup_{C_0} u,
\]
which contradicts \eqref{u-c-0}.

\smallskip
Now, let $Y_0 = \big(x_0, t_0 + 48\rho_0^2 (\mu)_{B_{8\rho_0}(x_0)}^{1/n}\big)$, and
\[
Q^1 = B_{4\rho_0}(x_0) \times \big(t_0 - 8\rho_0^2 (\mu)_{B_{8\rho_0}(x_0)}^{1/n}, t_0\big).
\]
By a straightforward calculation using \eqref{cylinder measure formula} and the doubling property of $\mu$ stated in \eqref{doubling property}, we find that
\[
C_0 = C_{\rho_0,\mu}(X_0) \subset Q^1 \quad \text{and} \quad \mu(Q^1) \leq 4^{n(n+1)}8^{n+1} K_0^{n+1} \mu(C_0).
\]
Therefore, from this, \eqref{def of delta-1}, and \eqref{v measure-1}, we see that
\[
\mu\Big(\Big\{\frac{2u}{u(X_0)} \geq 1\Big\} \cap Q^1\Big) 
\geq \mu(\Gamma_0) \geq \delta_0 \mu(C_0)  
\geq \delta_1 \mu(Q^1).
\]
Note that this is the same as \eqref{corol-0907-cond} when $\delta= 1-\delta_1$, $\frac{2u}{u(X_0)}$ is in place of  $u$, and $Q^1$ is in place of $Q$.  Because of this and the choice of $\epsilon$ in \eqref{choice of epsilon}, we apply Corollary \ref{Growth corollary} to $\frac{2u}{u(X_0)}$ in $C_{8\rho_0,\mu}(Y_0)$ to obtain
\begin{equation}\label{sup u-part 2}
u \geq \beta u(X_0) \quad \text{in} \quad C_{4\rho_0,\mu}(Y_0), \quad \text{where} \quad \beta = \beta(n, \nu, K_0) \in (0,1).
\end{equation}

\smallskip
Next, as $t_0\in \big(-4r^2(\mu)_{B_{2r}}^{1/n},-2r^2(\mu)_{B_{2r}}^{1/n}\big]$ and $\rho_0\in(0,\frac{r}{16}]$, it follows that
\begin{align} \label{tau def}
\tau:=t_0+48{\rho_0}^2(\mu)_{B_{8\rho_0}(x_0)}^{1/n}\in\Big(-4r^2(\mu)_{B_{2r}}^{1/n},-\frac{5}{4}r^2(\mu)_{B_{2r}}^{1/n}\Big).
 \end{align}
From this, and the choice of $\epsilon$ in \eqref{choice of epsilon}, we apply Corollary~\ref{prop-up-lemma} to $u$ in $C_{2r,\mu}$ with $z=x_0$, $h=\frac{1}{16}$, $4\rho_0$ in place of $\rho$, and $2r$ in place of $r$. We obtain
\begin{equation}\label{sup u-part 3}
 \inf_{B_{4\rho_0}(x_0)} u(\cdot, \tau)\leq \Big(\frac{2r}{\rho_0}\Big)^{\gamma} \inf_{B_{r}} u(\cdot, \sigma)\quad \text{for all}\quad \sigma\in \big(-r^2(\mu)_{B_{2r}}^{1/n},0\big),
\end{equation}
where $\gamma>0$ is defined in \eqref{gamma-def-0908}. 

\smallskip
Now, as $d(X_0)=32\rho_0$, we infer from \eqref{sup u-part 1} that
\[
\sup_{U_1}u \leq \Big( \frac{64\rho_0}{r} \Big)^{\gamma} u(X_0).
\]
From this, and by \eqref{sup u-part 2}, \eqref{tau def}, and the definition of $Y_0$, it follows that
\[
\sup_{U_1}u \leq \Big( \frac{64\rho_0}{r} \Big)^{\gamma} \beta^{-1} \inf_{B_{4\rho_0}(y_0)}u(\cdot, \tau).
\]
Then, using \eqref{sup u-part 3}, we obtain
\[
\sup_{U_1}u \leq 128^{\gamma}\beta^{-1}\inf_{U_2}u.
\]
This implies \eqref{Harnack} with $N=N(n,\nu, K_0)=128^{\gamma}\beta^{-1}>1$. The proof is completed. 
\end{proof}
\smallskip
\subsection{Proof of Corollary~\protect{\ref{Holder regularity}}} We adopt the argument from \cite{DeGiorgi, Moser-1, Moser-2}. 
The main idea  is to use the Harnack inequalities to derive oscillation decay estimates for $u$ on a nested sequence of weighted cylinders.
\begin{proof} Without loss of generality that $Y=0$. We first prove  \eqref{L-infty-est-intro}. Take any $Z\in C_{3r/2,\mu}$, by Lemma \ref{cylinder-inclusion}, we have $C_{r/2,\mu}(Z)\subset C_{2r,\mu}$. It then follows from the choice of $\epsilon$ in \eqref{choice of epsilon}, Corollary \ref{cor-1}, and the doubling property of $\mu$ in Lemma \ref{A-p doubling} that
\begin{align*}
u^+(Z) & \leq N \Big(\frac{1}{\mu(C_{r/2, \mu}(Z))} \int_{C_{r/2, \mu}(Z)} u^+(x, t)^q \mu(x)\, dxdt\Big)^{1/q} \\
& \leq N \Big(\frac{1}{\mu(C_{2r, \mu})} \int_{C_{2r, \mu}} |u(x, t)|^q \mu(x)\, dxdt\Big)^{1/q},
\end{align*}
where $N=N(n,\nu, K_0,q)>0$. Note that the same estimate also holds for $-u$. Then, since $Z$ is arbitrary in $C_{3r/2,\mu}$, we conclude that
\begin{equation}  \label{L3r/2-est-0905}
\|u\|_{L^\infty(C_{3r/2,\mu})} \leq N \Big(\frac{1}{\mu(C_{2r, \mu})} \int_{C_{2r, \mu}} |u(x, t)|^q \mu(x)\, dxdt\Big)^{1/q}.
\end{equation}
This implies  \eqref{L-infty-est-intro}.

\smallskip
Next, we prove \eqref{Holder}. By the scaling $u \mapsto u/\lambda$ with suitable $\lambda>0$, we can assume without loss of generality that
\begin{equation} \label{l-q-norm-small}
 N \Big(\frac{1}{\mu(C_{2r, \mu})} \int_{C_{2r, \mu}} |u(x, t)|^q \mu(x)\, dxdt\Big)^{1/q} \leq \frac{1}{2},
\end{equation}
where $N>0$ is defined in \eqref{L3r/2-est-0905}. Therefore, \eqref{Holder} is reduced to
\begin{equation} \label{reduced-holder-0906}
|u(X) - u(X_0)| \leq N r^{-\alpha} \rho_\omega(X, X_0)^{\alpha}, \quad \forall \ X, X_0 \in C_{r, \mu},
\end{equation}
with some generic constant $N>0$ and some $\alpha \in (0,1]$. 

\smallskip
To prove \eqref{reduced-holder-0906}, let  us fix $X_0 =(x_0, t_0) \in C_{r,\mu}$ and $X = (x, t) \in C_{r, \mu}$.Without loss of generality, we  only need to consider the case that $t \leq t_0$.  We then denote
\[ 
M(\tau)=\sup_{C_{\tau,\mu}(X_0)}u\quad\text{and}\quad m(\tau)=\inf_{C_{\tau,\mu}(X_0)}u , \quad 
\] 
for all $\tau>0$ such that $C_{\tau,\mu}(X_0)\subset C_{3r/2,\mu}$. The oscillation of $u$ in $C_{\tau,\mu}(X_0)$ is defined by
\[
\phi(\tau)=M(\tau)-m(\tau)\quad \text{for}\quad \tau \in[0, \tau_0),
\]
where  $\tau_0=\tau_0(X_0) \in [ r /2 , 3r / 2 ]$. We note that $\phi$ is a non-decreasing function. For simplicity, we also define some sub-cylinders in $C_{\tau,\mu}(X_0)$ 
by 
\begin{align*} 
C_{\tau,\mu}^{+}(X_0)&=B_{\tau/2}(x_0)\times \Big(t_0-\frac{1}{4}\tau^2(\mu)_{B_\tau(x_0)}^{1/n}, t_0\Big),\\
C_{\tau,\mu}^{-}(X_0)&=B_{\tau/2}(x_0)\times \Big(t_0-\frac{3}{4}\tau^2(\mu)_{B_\tau(x_0)}^{1/n}, t_0-\frac{1}{2}\tau^2(\mu)_{B_\tau(x_0)}^{1/n}\Big).
\end{align*}
It is not hard to check that 
\begin{equation}\label{sub-cylinder inclu}
C_{\tau/4,\mu}(X_0)\subset C_{\tau,\mu}^{+}(X_0).
\end{equation}
As $X_0\in C_{r,\mu}$, it follows from Lemma \ref{cylinder-inclusion} that there exists $r_0=r_0(X_0)\geq \frac{r}{2}$ such that $C_{r_0,\mu}(X_0)\subset C_{3r/2,\mu}$.
Also, note that
 \[
M(r_0)-u(x)\geq 0, \quad u(x)-m(r_0)\geq 0\quad \text{in}\quad  C_{r_0,\mu}(X_0) .
\]
We apply Theorem \ref{Harnack inequality} to $M(r_0)-u(x)$ and $u(x)-m(r_0)$ and by \eqref{sub-cylinder inclu}, then
\begin{equation}\label{M-u}
\begin{aligned}
\sup_{C^-_{r_0,\mu}(X_0)}\big(M(r_0)-u\big)&\leq N\inf_{C^+_{r_0,\mu}(X_0)}\big(M(r_0)-u\big)\\
&\leq N\inf_{C_{r_0/4,\mu}(X_0)}\big(M(r_0)-u\big),
\end{aligned}
\end{equation}
and
\begin{equation}\label{u-m}
\begin{aligned}
\sup_{C^-_{r_0,\mu}(X_0)}\big(u-m(r_0)\big)&\leq N\inf_{C^+_{r_0,\mu}(X_0)}\big(u-m(r_0)\big)\\
&\leq N\inf_{C_{r_0/4,\mu}(X_0)}\big(u-m(r_0)\big),
\end{aligned}
\end{equation}
where $N=N(n,\nu, K_0)>1$. 
Notice also that 
$m(r_0)\leq u(x)\leq M(r_0)$ for all $x\in C^-_{r_0,\mu}(X_0)$, and 
\[
M(4^{-1}r_0)=\sup_{C_{r_0/4,\mu}(X_0)}u,\quad\quad m(4^{-1}r_0)=\inf_{C_{r_0/4,\mu}(X_0)}u.
\] 
Therefore, adding both sides of \eqref{M-u} and \eqref{u-m}, we obtain
\[
N\left\{M(4^{-1}r_0)-m(4^{-1}r_0)\right\}\leq (N-1)\left\{M(r_0)-m(r_0)\right\}.
\]
Note that due to \eqref{L3r/2-est-0905} and \eqref{l-q-norm-small}, we have  $\phi(r_0)\leq 1$, hence,
\begin{equation*}
\phi(4^{-1}r_0)\leq \frac{N-1}{N}\phi(r_0)\leq \frac{N-1}{N}.
\end{equation*}
Then, by induction, for $\tau=4^{-k}r_0$ with $k\in \N$, we have the following oscillation decay estimate
\begin{equation}\label{oscillation decay}
\phi\big(4^{-k}r_0\big)\leq \frac{N-1}{N}\phi\big(4^{-k+1}r_0\big)\leq\cdots\leq \Big(\frac{N-1}{N}\Big)^k.
\end{equation}
We now consider the following two cases.

\smallskip \noindent
{\bf Case 1}. $X=(x,t)\in C_{r_0,\mu}(X_0)$. Then, there exists a unique integer $k_0\in \N$ such that 
\begin{equation}\label{X position}
X\in C_{4^{-k_0}r_0,\mu}(X_0)\setminus C_{4^{-k_0-1}r_0,\mu}(X_0).
\end{equation}
Now, recall the definition of quasi-metric $\rho_{\omega}$ in \eqref{quasi-metric-intro}, we have
\[
\begin{split}
\rho_{\omega}(X,X_0)^2 & =\left[\max\big\{|x-x_0|, \phi_{x_0}^{-1}(t_0-t)\big\}\right]^2 \\
& \quad +\min\big\{|x-x_0|^2, (\mu)_{B_{\Theta_{X_0}}(x_0)}^{-1/n}\cdot(t_0-t)\big\},
\end{split}
\]
where $\phi_{x_0}$ and $\Theta_{X_0}$ are defined in \eqref{phi-y} and \eqref{theta-introc}, respectively. By \eqref{X position}, we have 
\begin{equation*}
4^{-k_0-1}r_0 \leq \Theta_{X_0}(X)< 4^{-k_0}r_0.  
\end{equation*}
Also, from \eqref{d-r}, we have
\begin{equation*}
\Theta_{X_0}(X)\leq \rho_{\omega}(X,X_0)\leq \sqrt {2} \Theta_{X_0}(X).
\end{equation*}
 Then, combining the last two estimates with the fact that $r_0\geq \frac{r}{2}$, we obtain
\begin{equation}\label{quasi dist X-X_0}
4^{-k_0-1}2^{-1}r\leq 4^{-k_0-1}r_0\leq \rho_{\omega}(X, X_0).
\end{equation}
On the other hand, by \eqref{oscillation decay}, we have
\begin{equation}\label{u-difference}
|u(X)-u(X_0)|\leq \phi(4^{-k_0}r_0)\leq \Big(\frac{N-1}{N}\Big)^{k_0}=4^{-\alpha k_0}
\end{equation}
with $\alpha=-\log_4(1-1/N)\in (0,1]$, this is because we can always choose $N\geq \frac{4}{3}$ so that $\alpha\in (0,1]$. 
Using \eqref{quasi dist X-X_0} and \eqref{u-difference}, we have
\[
|u(X)-u(X_0)| \leq 8^{\alpha}r^{-\alpha}  \rho_{\omega}(X, X_0)^{\alpha}.
\]
This implies \eqref{reduced-holder-0906}.

\smallskip \noindent
{\bf Case 2}. $X=(x,t)\in C_{r,\mu}\setminus C_{r_0,\mu}(X_0)$. As  $t\leq t_0$, we have
\[
\rho_{\omega}(X, X_0)\geq r_0\geq \frac{r}{2}. 
\]
Also, from \eqref{L3r/2-est-0905} and \eqref{l-q-norm-small}, we have $|u(X) - u(X_0)| \leq 1$. Therefore,
\[ 
|u(X)-u(X_0)| \leq 2^{\alpha}r^{-\alpha} \rho_{\omega}(X, X_0)^{\alpha}.
\]
This also implies \eqref{reduced-holder-0906}. The proof of the corollary is completed.
\end{proof}
\subsection{Proof of Corollary~\protect{\ref{Liouville theorem}}}
\begin{proof}
We use the idea in the proof of Corollary~\ref{Holder regularity}. By subtracting a suitable constant, we may assume that $u$ is non-negative. Then, for any $r >0$, let 
\[ 
M(r) = \sup_{C_{r, \mu}} u , \quad 
m(r) = \inf_{C_{r, \mu}} u , \quad 
\phi(r)= M(r) -m(r). 
\]
Following the proof of Corollary~\ref{Holder regularity}, 
we have, for any $k \in \mathbb{N}$,  
\[
\phi(r)  \le \Big(\frac{N-1}{N}\Big)^k \phi(4^k r),\quad \text{where} \quad N = N(n, \nu, K_0)>1.
\]
Since $u$ is bounded and $r$ is an arbitrary positive number, it implies that $\phi(r) =0$. Therefore, $u$ is a constant. The proof of the corollary is completed.
\end{proof}
\appendix 
\section{Proof of Lemma~\ref{covering-1}}  \label{Appendix-A}
\begin{proof}[Proof of Lemma~{\protect\ref{covering-1}}] We begin by proving the first assertion in \eqref{conclusion covering} which claims that
\[
\mu(\Gamma\setminus E) =0.
\] 
Due to $\omega\in A_{1+\frac{1}{n}}$, we have $\mu(x)=\omega(x)^{-n}>0$ for almost every $x\in \R^n$. Consequently, one can verify that the $\mu$-density of $\Gamma$ is 1 at almost every point of $\Gamma$. This, together with the definition of $E$ in \eqref{set E}, implies that 
\[
|\Gamma\setminus E|=0.
\]
Thus, the claim above also holds true. The first assertion in  \eqref{conclusion covering} is proved.

\smallskip
Now,  we prove the second assertion in \eqref{conclusion covering}. Note that this assertion is trivial when $\mu(\Gamma) =0$. Hence, from now on, we assume that $\mu(\Gamma)>0$.  We now define
\begin{equation}\label{redef of A}
\tilde \A= \Big\{C = C_{r, \mu}(Y): \mu(C \cap \Gamma) = (1-\delta_0) \mu(C) \Big\}.
\end{equation}
It is clear that $\tilde{\mathcal{A}} \subset \A$. Moreover, for a fixed $C_{r,\mu}(Y)\in \A$ such that
\[
\mu(C_{r,\mu}(Y) \cap \Gamma) \geq (1-\delta_0) \mu(C_{r,\mu}(Y)),
\]
let us define $C_\theta = C_{\theta r,\mu}(Y)$, and
\[
g(\theta) = \frac{1}{\mu(C_{\theta})} \int_{C_{\theta}  \cap \Gamma} \mu(x) dx dt.
\] 
It follows that $g: [1, \infty) \rightarrow (0, \infty)$ is continuous. Moreover,   
\[
 g(1) \geq 1-\delta_0 \quad \text{and}  \quad \lim_{\theta \rightarrow \infty}g(\theta) =0,
\]
where the latter is due to the fact that $\Gamma$ is bounded 
and thus has a finite measure. 
Hence, we can find $\theta_0 \geq 1$ so that 
$g(\theta_0) = 1-\delta_0$ by the continuity of $g$. 
This implies that $C_{\theta_0} \in \tilde{\mathcal{A}}$. Also, as $C_{r,\mu}(Y) \subset C_{\theta_0}$, we conclude that  
\[
E = \displaystyle{\bigcup_{C \in \mathcal{A}}} C = \displaystyle{\bigcup_{C \in \tilde{\mathcal{A}}}} C.
\]

Next, we construct a sequence of pairwise disjoint weighted cylinders $\{C^i\}_{i=0}^{\infty}$ from $\tilde{\mathcal{A}}$ as in the proof of the classical Vitali covering lemma. 
First of all, note that $\Gamma$ is bounded, we see that 
the set $\{r>0: C_{r,\mu}(Y)\in \tilde \A\}$ is also bounded. 
Let us define $\A_0 =   \tilde \A$, and
\[
R_0=\sup\{r>0: C_{r,\mu}(Y)\in \tilde \A\}<\infty.
\]
Then, by a compactness argument, we can find $C^0$ such that 
\[ 
C^0 = C_{R_0,\mu}(Y^0)  \in   \A_0. 
\]
Next, for each $i \in \N$, by a similar argument, we can define $\A_i, R_i, C^i$ inductively by
\[
\A_i=\Big\{C=C_{r,\mu}(Y)\in \tilde \A: C\cap C^k=\emptyset, \ \forall \ k =0,1,\ldots ,i-1\Big\}
\]
and
\[
R_i =\sup\{r>0 : \ C_{r,\mu}(Y)\in \A_i\}, \quad C^i =C_{R_i,\mu}(Y^i)  \in   \A_i. 
\] 
If the set $\A_i$ is non-empty, we can continue this process. If the set $\A_i$ is empty, then stop the process and set $R_{k}=0$ for all $k\geq i$. 

\smallskip
From the construction, we see that
\[
\tilde \A= \A_0\supset\A_1\supset\A_2\supset \cdots \quad \text{and}\quad R_0\geq R_1\geq R_2\geq R_3\geq\cdots .
\]
We claim that 
\begin{equation}  \label{R-k-limit}
\lim_{k\rightarrow \infty} R_k =0.
\end{equation}
We prove \eqref{R-k-limit} by using a contradiction argument. Suppose that \eqref{R-k-limit} does not holds. Then, as $\{R_k\}_{k}$ is monotone decreasing, there is $r_0>0$ such that
\[ 
\lim_{k\rightarrow \infty} R_k =r_0 \quad \text{and} \quad R_k \geq r_0, \quad \forall \ k \in \mathbb{N}.
\]
Since $\Gamma$ is bounded, 
we can choose  $R>0$ sufficiently large so that
\[
x \in B_R \quad \text{if} \quad (x,t) \in \Gamma.
\]
We claim that there is a constant $N=N(n, K_0, R_0, R, r_0)>0$ such that
\begin{equation} \label{C-i-measure}
\mu(C^i)\geq Nr_0\mu(B_R)^{(n+1)/n}, \quad \forall \ i\in \N.
\end{equation}
Note that as $\{C^i\}$ are pairwise disjoint, \eqref{C-i-measure} implies a contradiction because 
\[
\begin{split}
\mu(\Gamma) &\geq \mu\big(\Gamma\cap(\cup^{\infty}_{i=0}C^i)\big)=\sum^{\infty}_{i=0}\mu\big(\Gamma\cap C^i\big) = (1-\delta_0)\sum^{\infty}_{i=0}\mu(C^i) \\
& \geq (1-\delta_0) \sum^{\infty}_{i=0}Nr_0\mu(B_R)^{(n+1)/n} =\infty , 
\end{split}
\]
and $\mu(\Gamma) <\infty$. Hence,  \eqref{R-k-limit} is proved once we verify \eqref{C-i-measure}. To this end, for each $i =0, 1, \ldots$, we write $C^{i}=C_{R_i,\mu}(Y^i) \in \mathcal{A}_i$  with $Y^i = (y_i, s_i) \in \mathbb{R}^{n+1}$. Due to the definition of $\tilde \A$, and the fact that $\mathcal{A}_i \subset \tilde \A$, we see that $B_R\cap B_{R_i}(y^i)\neq \emptyset$.  Then, if $B_R\subset B_{R_i}(y^i)$, by formula \eqref{cylinder measure formula}, we have
\begin{equation}\label{case 1}
\mu(C^i) = \sigma_n^{-1/n} R_i \mu(B_{R_i}(y^i))^{(n+1)/n} \geq  
\sigma_n^{-1/n} r_0{\mu\big(B_R\big)}^{(n+1)/n}.
\end{equation}
Otherwise, we see that $B_R \setminus B_{R_i}(y^i)\neq \emptyset$. Then, let $z \in B_R\cap B_{R_i}(y^i)$, it follows that
\[ |y^i|\leq |z| + |z-y^i| \leq R+R_i\leq R+R_0.
\]
Now, let $\tilde R= 2R+R_0$, and we see that
\[
B_R\subset B_{\tilde R}(y^i).
\]
Also, as $\mu \in A_{n+1}$, it follows from the doubling property of the $A_{n+1}$ weight (Lemma \ref{A-p doubling}) that
\begin{align*}
\mu(B_R)\leq\mu(B_{\tilde R}(y^i)) 
\leq K_0^n\Big( \frac{2R+ R_0}{r_0} \Big)^{n(n+1)}\mu(B_{R_i}(y^i)),
\end{align*}
which implies
\[
\mu(B_{R_i}(y^i)) \geq N \mu(B_R), \quad \text{where}\quad N=N(n,K_0, R_0, R, r_0)>0.
\]
Then
\begin{equation}\label{case 2}
\mu(C^i) = \sigma_n^{-1/n} R_i \mu(B_{R_i}(y^i))^{(n+1)/n} \geq Nr_0{\mu\big(B_R\big)}^{(n+1)/n}.
\end{equation}
Hence, \eqref{C-i-measure} follows from \eqref{case 1} and \eqref{case 2}, which also completes the proof of the assertion \eqref{R-k-limit}.

\smallskip
Next, for each $i\in \N$, we denote 
\[
\tilde {C}^{i}=B_{3R_{i}}(y^{i})\times \big(s_i-6R_i^2(\mu)^{1/n}_{B_{3R_{i}}(y^{i})},s_i+3R_i^2(\mu)^{1/n}_{B_{3R_{i}}(y^{i})}\big).
\]
Then, for any $C_{r,\mu}(Y) \in \A$, by \eqref{R-k-limit} and the construction of the series $\{R_k\}_k$, we can find a unique $k_0 \in \mathbb{N}$ such that
\begin{equation} \label{C-k-zero}
R_0\geq R_1\geq \ldots \geq R_{k_0} \geq r> R_{k_0+1}\geq R_{k_0+2} \geq \ldots.
\end{equation}
From this and the construction of $\{C^i\}$, we infer that there is some $i_0\in\{0,1,2,\cdots, k_0\}$ such that
\begin{equation} \label{i-zero-C}
C_{r,\mu}(Y)\cap C^{i_0} \not=\emptyset.
\end{equation}
Because of \eqref{C-k-zero} and \eqref{i-zero-C}, we see that  $C_{r,\mu}(Y) \subset \tilde C^{i_0}$. From this, the first assertion in \eqref{conclusion covering}, and by ignoring the set of zero $\mu$-measure, we have
\begin{equation} \label{Gamma-comp-Ci}
\Gamma\subset E\subset \bigcup^{\infty}_{i=0}\tilde C^i.
\end{equation}
From this, and by applying Lemma \ref{A-p doubling}, we get 
\[
\mu\big(\tilde C^{i}\big) \leq 3^{(n+1)^2+1}K_0^{n+1}\mu(C^{i}).
\]
Now, due to the pairwise disjoint property of $\{C^i\}$ and \eqref{redef of A}, we have
\[
\mu(E\setminus \Gamma)\geq\sum^{\infty}_{i=0}\mu\big((E\setminus \Gamma)\cap C^i\big)=\delta_0\sum^{\infty}_{i=0}\mu(C^i).
\]
From the last two estimates, and \eqref{Gamma-comp-Ci}, we  infer that
\begin{align*}
\mu(E) & =\mu(\Gamma)+ \mu(E\setminus \Gamma) \geq \mu(\Gamma)+\delta_0\sum^{\infty}_{i=0}\mu(C^i)
\\
&\geq \mu(\Gamma)+3^{-(n+1)^2-1}K_0^{-n-1}\delta_0\sum^{\infty}_{i=0}\mu(\tilde C^i)
\\
&\geq \mu(\Gamma)+3^{-(n+1)^2-1}K_0^{-n-1}\delta_0\mu(\Gamma)
\\
&=\big(1+3^{-(n+1)^2-1}K_0^{-n-1}\delta_0\big)\mu(\Gamma).
\end{align*}
Then, by setting $q_0=1+3^{-(n+1)^2-1}K_0^{-n-1}\delta_0>1$, we obtain the second assertion in \eqref{conclusion covering}. 

\smallskip
Now, we prove the last claim of the Lemma, \eqref{hat-E-E}. 
For each $x \in \mathbb{R}^n$, we denote
\[
E(x) =\big\{ t\in \mathbb{R}: (x,t) \in E \big\} \quad \text{and}\quad \hat{E}(x) =\big\{t \in \mathbb{R}: (x,t) \in \hat{E} \big\}.
\]
By the Fubini theorem, we have
\[
\mu(E) = \int_{\mathbb{R}^n} |E(x)| \mu(x) dx \quad \text{and} \quad \mu(\hat{E}) = \int_{\mathbb{R}^n} |\hat{E}(x)| \mu(x) dx,
\]
where $|E(x)|$ and $|\hat{E}(x)|$ denote the Lebesgue measure  on $\mathbb{R}$ of $E(x)$ and $\hat{E}(x)$, respectively.
As $\mu(x) \geq 0$, in order to prove \eqref{hat-E-E}, it suffices to show
\begin{equation}\label{time measure}
 |\hat{E}(x)| \geq q_1 |E(x)| \quad \text{for all}\ x \in \mathbb{R}^n, \text{ where } q_1 = (K_1-1)(K_1+1)^{-1}.
\end{equation}
For any fixed $x$ with $E(x)$ non-empty, we have $\hat E(x)\subset \R$ is non-empty and open, thus we may assume $\hat E(x)$ is a finite or countable union of disjoint open intervals, i.e.,
\[
\hat E(x)=\cup_{i\in \mathcal{I}}I_i(x),
\]
where $\mathcal {I}$ is an index set. Then, for any $t\in E(x)$, we have $(x,t)\in C_{\tilde r,\mu}(\tilde Y)\in \A$ with some $\tilde r>0$ and $\tilde Y=(\tilde y, \tilde s)\in \R^{n+1}$. Due to \eqref{set hat E}, we see 
\[
\big(\tilde s+{\tilde r}^2(\mu)^{1/n}_{B_{\tilde r}(\tilde y)},\tilde s+K_1{\tilde r}^2(\mu)^{1/n}_{B_{\tilde r}(\tilde y)}\big)\subset I_{i_0}(x)\quad \text{for some}\quad i_0\in \mathcal {I}.
\]
Set $r_{i_0}^2(x)=|I_{i_0}(x)|/(K_1-1)$, we can determine $a_{i_0}\in \R$ depending only on $x$ so that
\[
I_{i_0}(x)=(a_{i_0}+r_{i_0}^2, a_{i_0}+K_1r_{i_0}^2).
\]
Also, note that $r_{i_0}^2\geq {\tilde r}^2(\mu)^{1/n}_{B_{\tilde r}(\tilde y)}$, it follows from the last two formulas that
\begin{equation}\label{pull back}
a_{i_0}-r_{i_0}^2\leq \tilde s-{\tilde r}^2(\mu)^{1/n}_{B_{\tilde r}(\tilde y)}\leq \tilde s\leq a_{i_0}+K_1r_{i_0}^2.
\end{equation}
For convenience, we set $J_{i}(x)=(a_{i}-r_{i}^2,a_{i}+K_1r_{i}^2)$ for all $i\in \mathcal {I}$.  By \eqref{pull back}, we have
$E(x)\subset \bigcup_{i\in \mathcal {I}}J_i(x)$ and 
\[
|J_i(x)|=(K_1+1)r_i^2=\frac{K_1+1}{K_1-1}|I_i(x)|\quad \text{for all}\quad  i\in \mathcal {I}.
\]
 Thus, for all $x\in \R^n$ with $E(x)$ non-empty, we have 
\begin{equation*}
|\hat E(x)|=\sum_{i\in \mathcal {I}}|I_i(x)|= \sum_{i\in \mathcal {I}}\frac{K_1-1}{K_1+1}|J_i(x)|\geq \frac{K_1-1}{K_1+1}|E(x)|.
\end{equation*}
Taking $q_1=(K_1-1)(K_1+1)^{-1}$, we proved \eqref{time measure}. 
\end{proof}

\end{document}